\numberwithin{equation}{section}
\theoremstyle{plain}
\newtheorem{theorem}{Theorem}[section]
\newtheorem{lemma}{Lemma}[section]
\newtheorem{proposition}{Proposition}[section]
\theoremstyle{definition}
\newtheorem{example}{Example}[section]
\newtheorem{remark}{Remark}[section]
\newcommand{\ba}{\mathbf{a}}
\newcommand{\bA}{\mathbf{A}}
\newcommand{\bB}{\mathbf{B}}
\newcommand{\bF}{\mathbf{F}}
\newcommand{\bh}{\mathbf{h}}
\newcommand{\bH}{\mathbf{H}}
\newcommand{\bI}{\mathbf{I}}
\newcommand{\bV}{\mathbf{V}}
\newcommand{\bx}{\mathbf{x}}
\newcommand{\bX}{\mathbf{X}}
\newcommand{\by}{\mathbf{y}}
\newcommand{\bmu}{\bm{\mu}}
\newcommand{\bbeta}{\bm{\beta}}
\newcommand{\bSigma}{\bm{\Sigma}}
\newcommand{\ErrF}{\mathrm{ErrF}}
\newcommand{\ErrR}{\mathrm{ErrR}}
\newcommand{\ErrT}{\mathrm{ErrT}}
\newcommand{\OptF}{\mathrm{OptF}}
\newcommand{\OptR}{\mathrm{OptR}}
\newcommand{\dfF}{\mathrm{df}_{\rm F}}
\newcommand{\dfR}{\mathrm{df}_{\rm R}}
\newcommand{\trace}{{\rm tr}}
\newcommand{\E}{\mathrm{E}} 
\newcommand{\var}{\mathrm{Var}}
\newcommand{\cov}{\mathrm{Cov}}
\newcommand{\prob}{\mathrm{P}}
\DeclareMathOperator*{\argmin}{arg\,min}
\newcommand{\R}{\mathbb{R}}
\newcommand{\T}{\top}
\newcommand{\comp}{\mathrm{c}}
\newcommand{\cS}{\mathcal{S}}
\newcommand{\ubar}[1]{\underaccent{\bar}{#1}}
\title{Predictive Model Degrees of Freedom in Linear Regression}
\author{Bo Luan}
\author{Yoonkyung Lee}
\author{Yunzhang Zhu}
\affil{The Ohio State University}
\date{}
\begin{document}
\maketitle

\begin{abstract}
	Overparametrized interpolating models have drawn increasing attention from machine learning. Some recent studies suggest that regularized interpolating models can generalize well. This phenomenon seemingly contradicts the conventional wisdom that interpolation tends to overfit the data and performs poorly on test data. Further, it appears to defy the bias-variance trade-off. As one of the shortcomings of the existing theory, the classical notion of model degrees of freedom fails to explain the intrinsic difference among the interpolating models since it focuses on estimation of in-sample prediction error. This motivates an alternative measure of model complexity which can differentiate those interpolating models and take different test points into account. In particular, we propose a measure with a proper adjustment based on the squared covariance between the predictions and observations. Our analysis with least squares method reveals some interesting properties of the measure, which can reconcile the “double descent” phenomenon with the classical theory. This opens doors to an extended definition of model degrees of freedom in modern predictive settings.
	\vspace{8pt}
	
	\noindent \textbf{Keywords:} Double descent, Interpolating models, Least squares, Model degrees of freedom
\end{abstract}

\section{Introduction} \label{sec: introduction}

Overparameterized machine learning models have drawn increasing attention in recent years. These models are usually complex enough to achieve zero (or nearly zero) training error, yet they could still generalize well. One typical example is the state-of-the-art deep neural networks. \cite{zhang2016understanding} showed in an experiment that a well-designed interpolating deep neural network can perform respectably well even when considerable label noise is added. Similar interpolation performance has been observed in other machine learning methods such as random forests \citep{wyner2017explaining, belkin2018reconciling}, AdaBoost \citep{wyner2017explaining}, underdetermined least squares \citep{belkin2019two,hastie2019surprises, bartlett2020benign} and nearest neighbors \citep{belkin2018overfitting, xing2018statistical, xing2019benefit}. This phenomenon seemingly contradicts the conventional wisdom that interpolation tends to overfit the training data and performs poorly on test data. Further, it appears to defy the bias-variance trade-off.

In many situations, interpolating models may still perform poorly. It is easy to demonstrate varying performance of interpolating models numerically. For example, in our study of least squares method, interpolation occurs when the number of features $p$ for a linear regression model equals the sample size $n$ and continues beyond $n$ when the minimum-norm least squares method is applied. We call $n$ the interpolation threshold in this setting. As shown in the left panel of Figure \ref{fig: double_descent}, the prediction risk of these linear models follows a double U-shape pattern when viewed as a function of $p$. \cite{belkin2018reconciling} used the term ``double descent'' to describe these two side-by-side U-shape risk curves. They pointed out that models that are just beyond the interpolation threshold often have remarkably high risk and better models could emerge well beyond the interpolation threshold. This raises the necessity of differentiating and selecting from interpolating models.

Classical model selection criteria such as Mallows's $C_p$ \citep{mallows1973some}, AIC \citep{akaike1973information} and BIC \citep{schwarz1978estimating} all seek to balance the training error with model complexity. However, the interpolating models in our least squares example shown in Figure \ref{fig: double_descent} cannot be differentiated by these criteria as they all have the same training error and model complexity. Here, model complexity is defined through the map from the observations $\by=(y_1,\ldots,y_n)^\T$ to the fitted values $\hat{\bmu}=(\hat{\mu}_1,\ldots,\hat{\mu}_n)^\T$. This map is common for all interpolating models and corresponds to the identity map. Then a natural question is how to effectively quantify the complexity of statistical models, including those interpolating ones in a way that captures the difference in the prediction risk. Such a measure of model complexity could be useful in risk estimation to help differentiate and select models.
\begin{figure}[t!]
	\centering
	\includegraphics[scale = 0.46]{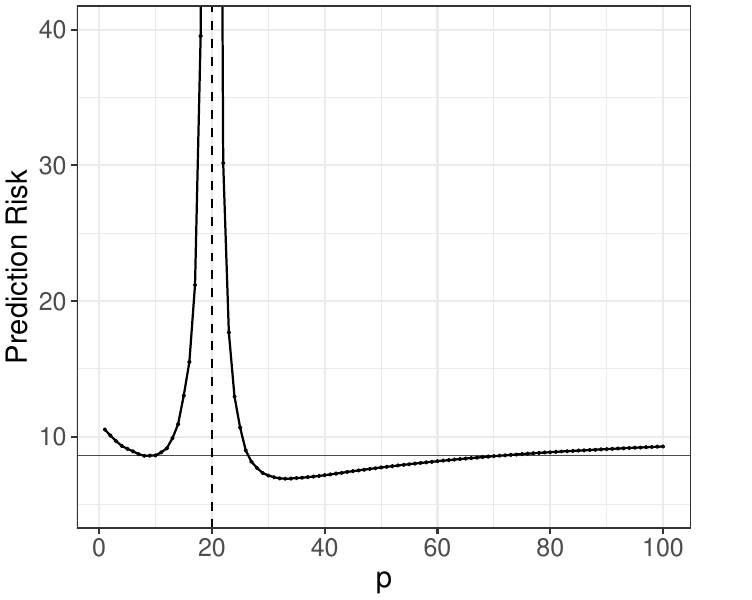}
	\includegraphics[scale = 0.46]{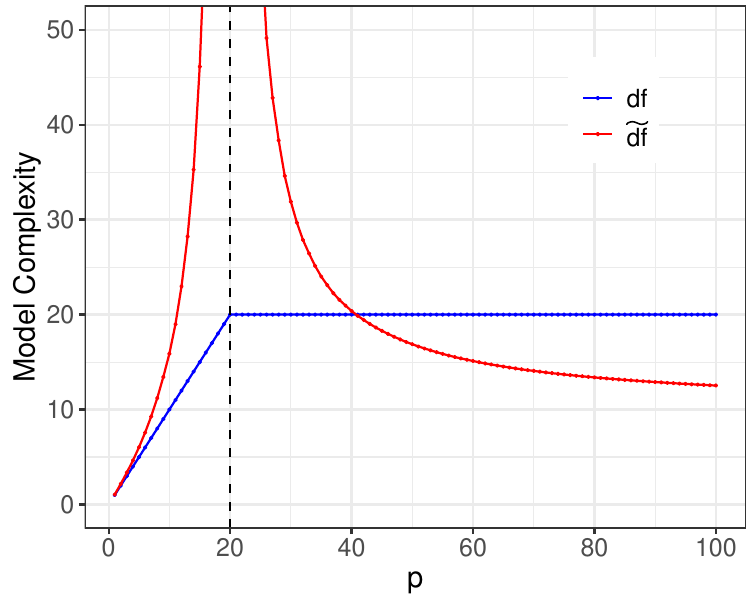}
	\caption{The ``double descent" phenomenon with least squares method (left) and model degrees of freedom (right) when sample size $n = 20$ and the number of variables $d = 100$. The vertical dashed line marks the interpolation threshold. The blue line is for the classical model degrees of freedom, and the red line is for the predictive model degrees of freedom.}
	\label{fig: double_descent}
\end{figure}

To answer this question, we examine the notion of model degrees of freedom in classical statistical theories. \cite{efron2004estimation} formally defined it as 
$$
	\mathrm{df} = \sum_{i=1}^n \frac{\cov(y_i,\hat{\mu}_i)}{\sigma_\varepsilon^2},
$$
where $\sigma_\varepsilon^2$ is the error variance. This classical model degrees of freedom, indicated by the blue line in the right panel of Figure \ref{fig: double_descent}, fails to explain the intrinsic difference among interpolating models, since it depends on in-sample prediction only. One of its underlying assumptions is that the values of covariates in test data are considered fixed and the same as those in the training data (Fixed-X setting). While this assumption may be valid in an experimental setting, it is less realistic in a predictive setting where new feature values typically arise. This motivates us to consider out-of-sample prediction for definition of model complexity instead, where test features are different from those in the training data (Random-X setting).

In this work, we aim to extend Efron's theory to the analysis of out-of-sample prediction risk and provide an extended model degrees of freedom for linear regression methods that is applicable to both non-interpolating and interpolating models. We call it the \textit{predictive model degrees of freedom}. We show that it adjusts the classical model degrees of freedom with the squared covariance between the observations and predicted values. In particular, for an interpolating model, it is of the form:
$$
	\widetilde{\mathrm{df}} = \mathrm{df} + \frac{n}{2}\sum_{i = 1}^n \E_{\bx_\ast}\left(\frac{\cov^2(y_i, \hat{\mu}_\ast \vert \bx_\ast)}{(\sigma_\varepsilon^2)^2}\right) - \frac{n}{2},
$$
where the expectation is taken over a random test point $\bx_\ast$, and $\hat{\mu}_\ast$ is a prediction of $y_\ast$ at $\bx_\ast$. As illustrated in the right panel of Figure \ref{fig: double_descent} by the red line, the predictive model degrees of freedom can indeed differentiate among interpolating models. Further, by aligning the prediction risk against the predictive model degrees of freedom, we demonstrate that the double descent phenomenon can be well reconciled with the classical theory. See Section \ref{subsec: double_descent}.

As a standard example of linear regression procedures, we look into the least squares method in detail. Using the predictive model degrees of freedom, we propose several risk estimators for model evaluation. Our analysis shows that, when the true model is indeed linear and the covariates are normally distributed, one estimator has a close connection with Hocking's $S_p$ criterion \citep{hocking1976biometrics, thompson1978selection} and $\hat{U}_{np}$ statistic \citep{breiman1983many}. Another estimator, obtained in a more general setting, is shown to be equivalent to the generalized covariance penalty criterion $\mathrm{RCp}^+$ defined in \cite{rosset2020fixed}. We find that this estimator could be negative around the interpolation threshold due to its large variance. To circumvent the issue, we develop an improved version that corrects the original estimator whenever it is negative.

We assess the performance of the proposed estimator through a number of numerical studies. Our prediction risk estimator exhibits smaller variance than the leave-one-out cross validation error, especially around the interpolation threshold. It also tends to favor more parsimonious models compared to classical criteria such as Mallows's $C_p$ and AIC. We believe that this is quite reasonable as out-of-sample prediction generally involves more uncertainty than in-sample prediction. Such uncertainty is reflected in the risk estimator through the predictive model degrees of freedom, which results in the selection of a simpler model.

The paper is organized as follows. In Section \ref{sec: review_classical}, we briefly review the classical theory on model degrees of freedom and prediction error estimation. We then propose the predictive model degrees of freedom for linear procedures in Section \ref{sec: new_df}. Sections \ref{sec: ls_method}, \ref{sec: pred_err_est} and \ref{sec: numerical_studies} study the predictive model degrees of freedom for least squares method in the context of subset regression, with the focus on properties, prediction error estimation and numerical studies, respectively. Section \ref{sec: grad_descient} discusses linear interpolating models. For the sake of conciseness, most of the proofs and details of examples will be deferred to the appendix.

\section{Classical Optimism Theory} \label{sec: review_classical}

In this section, we review the classical optimism theory presented in \cite{efron1986biased,efron2004estimation}. To begin with, we first introduce the notation and assumptions that will be used in this paper. Let $\lbrace (\bx_i, y_i) \in \R^d \times \R \vert i = 1,\ldots,n \rbrace$ be the training data generated under the following model assumptions:
\begin{enumerate}[label = A\arabic*.]
	\setcounter{enumi}{-1}
	\item
	$$
		y_i = \mu(\bx_i; \bbeta) + \varepsilon_i, \quad i = 1, \ldots, n,
	$$
	where $\mu(\cdot;\bbeta) \mathpunct{:} \R^d \to \R$ is the mean regression function with unknown parameter $\bbeta$.
	
	\item $\bx_1,\ldots,\bx_n$ are i.i.d. with $\E(\bx_i) = \mathbf{0}$ and $\var(\bx_i) = \bSigma$.
	
	\item $\varepsilon_1,\ldots,\varepsilon_n$ are i.i.d. with $\E(\varepsilon_i)=0$ and $\var(\varepsilon_i)=\sigma_\varepsilon^2$.
	
	\item $\bx_i$ and $\varepsilon_i$ are independent.
\end{enumerate}
We assume $\bSigma$ and $\sigma_\varepsilon^2$ are known throughout the paper unless noted otherwise. For brevity, we abbreviate $\mu(\bx_i;\bbeta)$ as $\mu_i$ and write $\bmu = (\mu_1,\ldots,\mu_n)^\T$,  $\by=(y_1,\ldots,y_n)^\T$ and $\bX = (\bx_1,\ldots, \bx_n)^\T$. For a given modeling procedure, let $\hat{\mu}_i$ be the fitted value of $y_i$ and $\hat{\bmu} = (\hat{\mu}_1, \ldots, \hat{\mu}_n)^\T$. 

We define the training error of $\hat{\bmu}$ given $(\bX,\by)$ as
\begin{equation}\label{eq: ErrT}
	\ErrT_\mathbf{X,y} = \frac{1}{n} \Vert \by - \hat{\bmu} \Vert^2.
\end{equation}
The classical risk analysis relies on the assumption that $\bx_1,\ldots,\bx_n$ are fixed and the covariate values are the same in the training and test data. Let $\tilde{\by}$ be an independent copy of $\by$ given $\bX$. The in-sample prediction error is defined as
\begin{equation}\label{eq: ErrF}
	\ErrF_{\bX,\by} = \frac{1}{n} \E(\Vert \tilde{\by} - \hat{\bmu} \Vert^2 \vert \bX,\by),
\end{equation}
where the expectation is with respect to $\tilde{\by}$. 

The training error generally underestimates the in-sample prediction error, since the data for model fitting are reused for evaluation. \cite{efron1986biased} used the term \textit{optimism} to refer to such a downward bias, indicating how optimistic the training error is as an estimate of the prediction error. Further, averaging over $\by$ given $\bX$, he defined the \textit{expected optimism} as
\begin{equation}\label{eq: expected_OptF}
	\OptF_{\bX} = \ErrF_{\bX} - \ErrT_{\bX},
\end{equation}
where $\ErrF_{\bX}$ and $\ErrT_{\bX}$ are the expectations of $\ErrF_{\bX, \by}$ and $\ErrT_{\bX, \by}$ with respect to $\by$ respectively. 

Note that $\ErrF_\bX$ and $\ErrT_\bX$ have bias-variance decomposition as shown below:
\begin{align}
	& \ErrF_\bX = \sigma_\varepsilon^2 + \frac{1}{n}\Vert \E(\hat{\bmu} \vert \bX) - \bmu \Vert^2 + \frac{1}{n}\E[\Vert \hat{\bmu}-\E(\hat{\bmu}\vert\bX)\Vert^2 \vert \bX], \label{eq: bias_var_ErrF}\\
	&\begin{aligned}
		\ErrT_\bX = \sigma_\varepsilon^2 
		& + \frac{1}{n}\Vert \E(\hat{\bmu} \vert \bX) - \bmu \Vert^2\\
		& + \frac{1}{n}\left(\E[\Vert \hat{\bmu}-\E(\hat{\bmu}\vert\bX)\Vert^2 \vert \bX] - 2\E[(\by-\bmu)^\T(\hat{\bmu}-\E(\hat{\bmu}\vert \bX)) \vert \bX]\right), \label{eq: bias_var_ErrT}
	\end{aligned}
\end{align}
where the three terms on the right hand side are the irreducible error, (squared) bias and variance, respectively. Subtracting \eqref{eq: bias_var_ErrT} from \eqref{eq: bias_var_ErrF} yields $\OptF_\bX$, which is the difference between the model variance on the test data and that on the training data, or the \textit{excess variance}. Further, we have
\begin{equation}\label{eq: OptF_cov_penalty}
	\OptF_{\bX} = \frac{2}{n}\E[(\by-\bmu)^\T(\hat{\bmu}-\E(\hat{\bmu}\vert \bX)) \vert \bX] = \frac{2}{n}\sum_{i=1}^n \cov(y_i, \hat{\mu}_i\vert \bX).
\end{equation}
In other words, the expected optimism can be expressed in terms of the covariance between the observations $y_i$ and their fitted values $\hat{\mu}_i$. 

Based on \eqref{eq: OptF_cov_penalty}, an unbiased estimator of $\ErrF_{\bX}$ is given by
\begin{equation}\label{eq: cov_penalty}
	\widehat{\ErrF} = \ErrT_{\bX, \by} + \frac{2}{n} \sum_{i=1}^n \cov(y_i, \hat{\mu}_i\vert \bX).
\end{equation}
This prediction error estimate can be interpreted as an adjusted training error with the covariance penalty that accounts for the flexibility of a model. \cite{efron2004estimation} then formally defined the degrees of freedom for a modeling procedure of $\hat{\bmu}$ as
\begin{equation}\label{eq: df_F}
	\dfF = \sum_{i=1}^n \frac{\cov(y_i, \hat{\mu}_i\vert \bX)}{\sigma_\varepsilon^2}.
\end{equation}

In some special cases, $\dfF$ can be calculated explicitly. For example, for a linear procedure that predicts $\by$ with $\hat{\bmu} = \bH \by$, where $\bH = (h_{ij})$ depends only on $\bX$, we have $\cov(y_i, \hat{\mu}_i \vert \bX) = \sigma_\varepsilon^2 h_{ii}$ and
\begin{equation}\label{eq: df_F_linear_procedures}
	\dfF = \sum_{i=1}^n h_{ii} = \trace(\bH).
\end{equation}
This also agrees with the model degrees of freedom for linear smoothers defined by \cite{tibshirani1987local} through the expected residual sum of squares. 

In practice, one may estimate the degrees of freedom and evaluate the prediction error estimate  $\widehat{\ErrF}$ based on it to differentiate a set of models and select from them. However, the focus on in-sample prediction has the limitation that we may not tell how a modeling procedure will generalize at new feature values that may arise in the future. This also impacts how we measure model complexity. As a case in point, it completely fails to differentiate among interpolating models, since the training error of these models are all zero and $\cov(y_i, \hat{\mu}_i \vert \bx_i) = \var(y_i \vert \bx_i) \equiv \sigma_\varepsilon^2$ is constant. These motivate us to study out-of-sample prediction and look for alternative model complexity measures.

\section{The Predictive Model Degrees of Freedom} \label{sec: new_df}

In this section, we investigate the out-of-sample prediction error for linear procedures in the Random-X setting. Let $(\bx_\ast,\varepsilon_\ast)$ be an independent copy of $(\bx_i,\varepsilon_i)$. Define $\mu_\ast = \mu(\bx_\ast; \bbeta)$, the true mean function value at $\bx_\ast$, and $y_\ast = \mu_\ast + \varepsilon_\ast$, a new realization of $y$ at $\bx_\ast$. Let $\hat{\mu}_\ast$ be the prediction of $y_\ast$ with a fitted model $\hat{\bmu}$. Note that $\hat{\mu}_\ast$ depends on $\bX$, $\by$ and $\bx_\ast$.

The out-of-sample prediction error of $\hat{\bmu}$ is defined as
\begin{equation} \label{eq: ErrR}
	\ErrR_{\bX,\by} = \E[(y_\ast - \hat{\mu}_\ast)^2 \vert \bX, \by],
\end{equation}
where the expectation is with respect to $(\bx_\ast, y_\ast)$. Following Efron's work, we then define the (Random-X) expected optimism as
\begin{equation} \label{eq: exp_OptR}
	\OptR_\bX = \ErrR_\bX - \ErrT_\bX,
\end{equation}
where $\ErrR_\bX$ is the expectation of $\ErrR_{\bX,\by}$ with respect to $\by$.

Consider a linear procedure with hat matrix $\bH$ such that $\hat{\bmu} = \bH \by$. For each $\bx_\ast \in \R^d$, there must exist $\bh_\ast \in \R^n$ that depends only on $\bX$ and $\bx_\ast$ such that
$$
	\hat{\mu}_\ast = \bh_\ast^\T \by.
$$
We call $\bh_\ast$ the \textit{hat vector} at $\bx_\ast$ reminiscent of the hat matrix in linear regression. Then, we can describe the bias-variance decomposition for $\ErrR_\bX$ and $\ErrT_\bX$ as
\begin{align*}
	& \ErrR_{\bX} = \sigma_\varepsilon^2 + \E[(\mu_\ast - \bh_\ast^\T \bmu)^2 \vert \bX] + \sigma_\varepsilon^2 \E(\Vert \bh_\ast\Vert^2 \vert \bX), \\ 
	& \ErrT_\bX = \sigma_\varepsilon^2 + \frac{1}{n}\Vert \bmu - \bH\bmu \Vert^2 + \frac{1}{n}\sigma_\varepsilon^2 \trace(\bH^\T \bH - 2 \bH). 
\end{align*}
Here $\sigma_\varepsilon^2$ is the irreducible error variance, the second term is the squared bias, and the third term is the variance. Consequently, \eqref{eq: exp_OptR} becomes
\begin{equation} \label{eq: OptR_decomposition}
	\OptR_\bX = \Delta B_\bX + \frac{2}{n}\sigma_\varepsilon^2 \left[\trace(\bH) + \frac{n}{2}\left(\E(\Vert \bh_\ast\Vert^2 \vert \bX) - \frac{1}{n}\trace(\bH^\T \bH)\right)\right],
\end{equation}
where $\Delta B_\bX = \E[(\mu_\ast - \bh_\ast^\T \bmu)^2 \vert \bX] - \frac{1}{n}\Vert \bmu - \bH\bmu \Vert^2$. We call $\Delta B_\bX$ the \textit{excess bias}, since it measures the extra amount of bias due to making out-of-sample prediction. We see that $\Delta B_\bX$ depends on both the true mean regression function $\mu(\cdot;\bbeta)$ and the modeling procedure $\hat{\bmu}$. The second term in \eqref{eq: OptR_decomposition} is the excess variance, which depends only on the procedure and the distribution of $\bx_\ast$. 

Using the parallel between \eqref{eq: OptF_cov_penalty} and \eqref{eq: OptR_decomposition} in the expected optimism and the degrees of freedom in the Fixed-X setting in \eqref{eq: df_F}, we define the model degrees of freedom under the Random-X setting as
\begin{equation} \label{eq: df_R}
	\dfR = \trace(\bH) + \frac{n}{2}\left(\E(\Vert \bh_\ast\Vert^2 \vert \bX) - \frac{1}{n}\trace(\bH^\T \bH)\right).
\end{equation}
We call $\dfR$ the \textit{predictive model degrees of freedom} as it is a more pertinent measure of model complexity in a genuine predictive setting than the classical one for in-sample prediction. Note that both $\dfR$ and $\dfF$ are defined via the excess variance of a procedure. The following subsection presents some interesting facts about $\dfR$.

\subsection{Properties}

We demonstrate some general properties of $\dfR$. We begin by giving two remarks about the definition.

\begin{remark}[Interpolating models]
	For interpolating models, $\bH = \bI_n$. Then the predictive model degrees of freedom in \eqref{eq: df_R} is simplified to
	\begin{equation}\label{eq: dfR_interpolating_models}
		\dfR = \frac{n}{2} + \frac{n}{2}\E(\Vert \bh_\ast\Vert^2 \vert \bX).
	\end{equation}
	For $\bx_\ast \neq \bx_i$, the hat vector $\bh_\ast$ usually varies across different interpolating models (see Examples \ref{ex: df_wgt_funcs} and \ref{ex: df_vs_bw} for example). Thus, the predictive model degrees of freedom can indeed differentiate interpolating models.
\end{remark}

\begin{remark}[Connection to $\dfF$]
	Since $\dfF = \mathrm{tr}(\bH)$ for linear procedures, we can rewrite \eqref{eq: df_R} as
	\begin{equation} \label{eq: df_connection}
		\dfR = \dfF + \frac{n}{2}\left(\E(\Vert \bh_\ast\Vert^2 \vert \bX) - \frac{1}{n}\trace(\bH^\T \bH)\right).
	\end{equation}
	Thus, the predictive model degrees of freedom $\dfR$ adjusts $\dfF$ with an additional term that accounts for out-of-sample prediction. In particular, if $\bx_\ast$ is drawn from the empirical distribution of $\bx_1,\ldots,\bx_n$ with $\prob(\bx_\ast = \bx_i) = \frac{1}{n}$ for $i=1,\ldots,n$, then the additional term vanishes and we have $\dfR = \dfF$.
\end{remark}

The following proposition provides an interesting representation of $\dfR - \dfF$.
\begin{proposition}[Covariance penalty representation] \label{prop: dfR_cov_representation}
	For a linear procedure, $\dfR - \dfF$ can be expressed as
	$$
		\dfR - \dfF = \frac{n}{2}\sum_{i=1}^n\left[\E\left(\frac{\cov^2(y_i, \hat{\mu}_\ast \vert \bx_\ast, \bX)}{(\sigma_\varepsilon^2)^2} \Bigg\vert \bX\right) - \frac{1}{n}\sum_{j=1}^n \frac{\cov^2(y_i, \hat{\mu}_j \vert \bX)}{(\sigma_\varepsilon^2)^2}\right].
	$$
\end{proposition}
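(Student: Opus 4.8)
The plan is to start from the connection formula \eqref{eq: df_connection}, which already expresses
$\dfR - \dfF = \frac{n}{2}\bigl(\E(\Vert\bh_\ast\Vert^2\vert\bX) - \frac{1}{n}\trace(\bH^\T\bH)\bigr)$,
and then translate each of the two terms inside the parentheses into the corresponding sum of squared covariances. The whole argument reduces to a single clean computation that exploits the linearity of the procedure together with the conditional second-moment structure of the observations.

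First I would compute the relevant covariances explicitly. Conditioning on $\bX$ (and on the independent $\bx_\ast$), assumptions A0, A2 and A3 give $\cov(y_i,y_k\vert\bX)=\sigma_\varepsilon^2\delta_{ik}$, since the mean values $\mu_k$ are fixed given $\bX$ and the errors $\varepsilon_k$ are i.i.d.\ with variance $\sigma_\varepsilon^2$ and independent of $\bX$. The key observation is that conditioning additionally on $\bx_\ast$ renders the hat vector $\bh_\ast=(h_{\ast,1},\ldots,h_{\ast,n})^\T$ deterministic, so that $\hat\mu_\ast=\bh_\ast^\T\by$ is a fixed linear combination of the $y_k$. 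Hence
\begin{equation*}
	\cov(y_i,\hat\mu_\ast\vert\bx_\ast,\bX)=\sum_{k=1}^n h_{\ast,k}\,\cov(y_i,y_k\vert\bX)=\sigma_\varepsilon^2\,h_{\ast,i},
\end{equation*}
and identically, writing $\hat\mu_j=\sum_k h_{jk}y_k$ from the $j$-th row of $\bH$, one gets $\cov(y_i,\hat\mu_j\vert\bX)=\sigma_\varepsilon^2\,h_{ji}$.

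Second I would substitute these identities. Dividing by $(\sigma_\varepsilon^2)^2$ gives $\cov^2(y_i,\hat\mu_\ast\vert\bx_\ast,\bX)/(\sigma_\varepsilon^2)^2=h_{\ast,i}^2$; summing over $i$ yields $\Vert\bh_\ast\Vert^2$, and taking the expectation over $\bx_\ast$ given $\bX$ recovers $\E(\Vert\bh_\ast\Vert^2\vert\bX)$, which matches the first term in \eqref{eq: df_connection}. Analogously $\cov^2(y_i,\hat\mu_j\vert\bX)/(\sigma_\varepsilon^2)^2=h_{ji}^2$, and the double sum $\frac{1}{n}\sum_{i=1}^n\sum_{j=1}^n h_{ji}^2=\frac{1}{n}\trace(\bH^\T\bH)$ reproduces the second term. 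Assembling the two pieces and multiplying by $\frac{n}{2}$ delivers the stated identity.

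There is no serious obstacle here: the result is essentially a dictionary translation between the norm/trace representation in \eqref{eq: df_connection} and the covariance representation. The only point requiring care is the order of conditioning — one must condition on $\bx_\ast$ \emph{before} forming the covariance, so that $\bh_\ast$ may be pulled out as a constant, and only \emph{afterwards} take the outer expectation over $\bx_\ast$ given $\bX$. Conflating these two layers, or forgetting that $\cov(y_i,y_k\vert\bX)$ is diagonal, would be the main way the computation could go astray.
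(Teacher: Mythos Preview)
Your proposal is correct and follows essentially the same route as the paper: compute $\cov(y_i,\hat\mu_\ast\vert\bx_\ast,\bX)=\sigma_\varepsilon^2 h_{\ast,i}$ and $\cov(y_i,\hat\mu_j\vert\bX)=\sigma_\varepsilon^2 h_{ji}$, then sum the squares to recover $\E(\Vert\bh_\ast\Vert^2\vert\bX)$ and $\trace(\bH^\T\bH)$ and invoke \eqref{eq: df_connection}. The paper's own proof is terser but identical in substance; your added remarks on the order of conditioning are a useful clarification rather than a departure.
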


The representation above extends the covariance penalty approach to the Random-X setting. It also reveals a key difference between such an approach used for in-sample prediction and that for out-of-sample prediction. For the former, a reasonable model fit to training data produces fitted values positively correlated with the corresponding observations. As a result, the model degrees of freedom $\dfF$ defined as the sum of covariances between the observations and in-sample predictions should be positive.

However, for the latter case, predictions on new feature values could be negatively correlated with the observations. For example, consider fitting a univariate linear regression model using least squares method without intercept on $\lbrace(x_i,y_i)\in\R\times\R \vert i = 1,\ldots,n \rbrace$. For a given $x_\ast\in\R$, we have $\hat{\mu}_\ast = \sum_{i=1}^n h_{\ast,i} y_i$, where $h_{\ast,i} = \frac{x_i x_\ast}{\sum_{i=1}^n x_i^2}$. Then for any $x_i$ such that $x_i x_\ast < 0$,
$$
	\cov(\hat{\mu}_\ast, y_i\vert \bX,x_\ast) = \sigma_\varepsilon^2 h_{\ast,i} < 0.
$$
The representation in the above proposition involves squared covariances reflecting the change in the relation between a response and its prediction for an independent test case.

For a modeling procedure $\hat{\bmu}$, \cite{ye1998measuring} defined the generalized degrees of freedom (GDF) as
$$
	\mathrm{GDF}(\hat{\bmu}) = \sum_{i=1}^n \frac{\partial \E(\hat{\mu}_i \vert \bX) }{\partial \mu_i}.
$$
It turns out that we can express $\dfR - \dfF$ in a similar way as the GDF.
\begin{proposition}[GDF representation] \label{prop: dfR_gdf_representation}
	For a linear procedure, $\dfR - \dfF$ can be expressed as
	$$
		\dfR - \dfF = \frac{n}{2} \sum_{i=1}^n\left[\E\left(\left(\frac{\partial \E(\hat{\mu}_\ast \vert \bx_\ast,\bX)}{\partial \mu_i}\right)^2 \Bigg\vert \bX\right) - \frac{1}{n} \sum_{j=1}^n \left(\frac{\partial \E(\hat{\mu}_j \vert \bX)}{\partial \mu_i}\right)^2\right].
	$$
\end{proposition}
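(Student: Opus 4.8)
The plan is to reduce the claim to the explicit excess-variance expression already derived and then to identify the GDF partial derivatives with the entries of the hat matrix and hat vector. Combining the definition \eqref{eq: df_R} with $\dfF = \trace(\bH)$ gives
$$
	\dfR - \dfF = \frac{n}{2}\left(\E(\Vert \bh_\ast\Vert^2 \vert \bX) - \frac{1}{n}\trace(\bH^\T \bH)\right).
$$
Writing $\bh_\ast = (h_{\ast,1},\ldots,h_{\ast,n})^\T$ and $\bH = (h_{ji})$ and using $\Vert \bh_\ast\Vert^2 = \sum_{i} h_{\ast,i}^2$ together with $\trace(\bH^\T \bH) = \sum_{i,j} h_{ji}^2$, this rearranges to
$$
	\dfR - \dfF = \frac{n}{2}\sum_{i=1}^n\left(\E(h_{\ast,i}^2 \vert \bX) - \frac{1}{n}\sum_{j=1}^n h_{ji}^2\right).
$$
It therefore suffices to show that $\partial \E(\hat{\mu}_j \vert \bX)/\partial \mu_i = h_{ji}$ and $\partial \E(\hat{\mu}_\ast \vert \bx_\ast,\bX)/\partial \mu_i = h_{\ast,i}$, after which squaring and substituting reproduces the claimed formula term by term.

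First I would compute the conditional means. Under A0--A3 we have $y_k = \mu_k + \varepsilon_k$ with $\E(\varepsilon_k \vert \bX)=0$, so by linearity of expectation and the fact that $\bH$ is a function of $\bX$ alone,
$$
	\E(\hat{\mu}_j \vert \bX) = \E\Big(\sum_{k=1}^n h_{jk}\, y_k \,\Big\vert\, \bX\Big) = \sum_{k=1}^n h_{jk}\,\mu_k.
$$
This is an affine function of $\bmu$ whose coefficients $h_{jk}$ do not involve $\bmu$, so differentiating term by term yields $\partial \E(\hat{\mu}_j\vert\bX)/\partial\mu_i = h_{ji}$, the familiar GDF identity for linear procedures. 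The identical argument handles the test prediction: since $\hat{\mu}_\ast = \bh_\ast^\T\by$ with $\bh_\ast$ depending only on $(\bX,\bx_\ast)$, conditioning additionally on $\bx_\ast$ gives $\E(\hat{\mu}_\ast\vert\bx_\ast,\bX) = \sum_{k} h_{\ast,k}\,\mu_k$ and hence $\partial \E(\hat{\mu}_\ast\vert\bx_\ast,\bX)/\partial\mu_i = h_{\ast,i}$.

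Substituting these derivatives into the right-hand side of the proposition then produces $\sum_i \E(h_{\ast,i}^2 \vert \bX)$ in the first group of terms and $\sum_{i,j} h_{ji}^2$ in the second, which matches the rearranged excess-variance expression above and completes the proof. The argument is essentially mechanical, and I do not anticipate a genuine obstacle; the one point deserving care is the interchange of expectation and differentiation, which is justified here precisely because a linear procedure has hat entries $h_{ji}$ (and $h_{\ast,i}$) that are functions of the covariates only, so $\E(\hat{\mu}_j\vert\bX)$ is literally affine in $\bmu$ with constant coefficients. I would therefore expect the remaining effort to be bookkeeping---matching $\sum_i \E(h_{\ast,i}^2\vert\bX)$ with $\E(\Vert\bh_\ast\Vert^2\vert\bX)$ and $\sum_{i,j} h_{ji}^2$ with $\trace(\bH^\T\bH)$---rather than any analytic difficulty. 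It is worth noting that this parallels Proposition \ref{prop: dfR_cov_representation}, where the role played here by the derivative $h_{ji}$ is instead played by $\cov(y_i,\hat{\mu}_j\vert\bX)/\sigma_\varepsilon^2 = h_{ji}$.
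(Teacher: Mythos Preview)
Your proposal is correct and follows essentially the same approach as the paper: identify the GDF partial derivatives with the hat entries $h_{ji}$ and $h_{\ast,i}$ via linearity of the conditional mean, then match $\sum_i \E(h_{\ast,i}^2\mid\bX)=\E(\Vert\bh_\ast\Vert^2\mid\bX)$ and $\sum_{i,j}h_{ji}^2=\trace(\bH^\T\bH)$ to the excess-variance formula in \eqref{eq: df_R}. The paper proves Propositions \ref{prop: dfR_cov_representation} and \ref{prop: dfR_gdf_representation} jointly in exactly this way, recording both the covariance and derivative identities in one line before summing.
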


\subsection{Examples}
We provide some examples where the predictive model degrees of freedom can be evaluated explicitly.

\begin{example}[Ridge regression] \label{ex: ridge_df}
	Assume $\bX \in \R^{n \times p}$. For the ridge regression problem
	\begin{equation}\label{eq: ridge_regression}
		\min_{\mathbf{b}} \frac{1}{n}\Vert \by - \bX \mathbf{b} \Vert^2 + \lambda \Vert \mathbf{b} \Vert^2, \quad \lambda > 0,
	\end{equation}
	we have $\bH = \bX (\bX^\T \bX + \lambda \bI)^{-1} \bX^\T$ and $\bh_\ast = \bX (\bX^\T \bX + \lambda \bI)^{-1} \bx_\ast$. Assume $\bX^\T \bX$ has the spectral decomposition $\mathbf{U} \bm{\bm{\Omega}} \mathbf{U}^\T$, where $\bm{\Omega} = \mathrm{diag}(\omega_1,\ldots, \omega_d)$. Let $\bSigma = \var(\bx_i)$ and $\bV = \mathbf{U}^\T \bSigma  \mathbf{U} = (v_{ij})$. Then it is easy to show that
	$$
		\dfR(\lambda) = \sum_{j=1}^d \frac{\omega_j^2 + (2\lambda + n v_{jj})\omega_j}{2(\omega_j+\lambda)^2} \,\text{ and }\; \frac{\partial \dfR}{\partial \lambda} = -\sum_{j=1}^n\frac{\lambda + 2n v_{jj} \omega_j}{(\omega_j+\lambda)^3} < 0.
	$$
	This implies that $\dfR$ is a decreasing function of $\lambda$. As shown in Figure \ref{fig: ex_ridge_df}, $\dfR$ and $\dfF$ exhibit very different relationships under the scenarios of $p < n$ and $p > n$ for small $\lambda$. As $\lambda \to 0$, the ridge regression estimator converges to the ordinary least squares estimator when $p \leq n$ and the minimum-norm least squares solution when $p > n$. Interestingly in this setting, the predictive model degrees of freedom for a model with more variables could be even smaller than that with fewer. We will study this phenomenon further for least squares method in Section \ref{sec: ls_method}. As $\lambda$ increases, the difference between $\dfR$ and $\dfF$ diminishes to 0 in both scenarios. 
	\begin{figure}[t!]
		\centering
		\includegraphics[scale = 0.5]{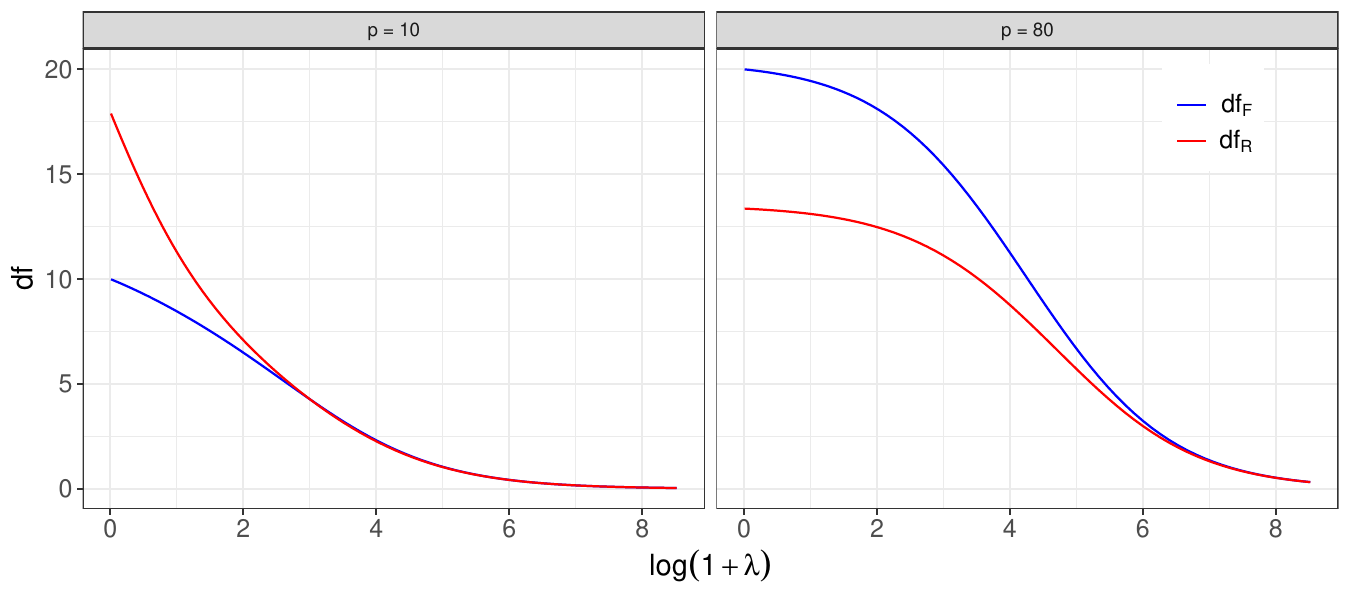}
		\caption{Comparison of $\dfR$ and $\dfF$ as a function of $\lambda$ in ridge regression with $n = 20$, $p = 10$ (left) and $n = 20$, $p = 80$ (right). $\bx_1, \ldots, \bx_n$ are assumed to be from $\mathcal{N}(\mathbf{0}, \bI_p)$.}
		\label{fig: ex_ridge_df}
	\end{figure}
\end{example}

\begin{example}\label{ex: df_wgt_funcs}
	Let $x_\ast, x_1, \ldots, x_n$ be i.i.d. from a distribution on a finite interval $[a,b]$ with continuous and positive density. Without loss of generality, assume $a<x_1<\cdots<x_n<b$. Let $z_{\ast,i} = \frac{x_\ast-x_i}{x_{i+1}-x_i}$. Consider the following interpolating scheme:
	$$
		\hat{\mu}_\ast = 
		\begin{cases}
			y_1, & a\leq x_\ast < x_1,\\
			K(z_{\ast,i}) y_i + (1 - K(z_{\ast,i})) y_{i+1}, & x_i\leq x_\ast < x_{i+1},\; i=1,\ldots,n-1,\\
			y_n, & x_n\leq x_\ast \leq b,
		\end{cases}
	$$
	where $K \mathpunct{:} [0,1]\to [0,1]$ is a nonincreasing weight function with $K(0)=1$ and $K(1)=0$. We now consider four choices of $K$:
	\begin{enumerate}
		\item[I.] Constant: $K(z) = \mathbf{1}_{\lbrace z<\frac{1}{2}\rbrace}$. This corresponds to 1-nearest neighbor regression.
		
		\item[II.] Linear: $K(z) = 1-z$.
		
		\item[III.] Quadratic: $K(z) = 1-z^2$.
		
		\item[IV.] Cosine: $K(z) = \cos\left(\frac{\pi}{2}z\right)$.
	\end{enumerate}
	The four interpolating schemes are illustrated in the left panel of Figure \ref{fig: ex_interpolant_illustration}. For $x_\ast \in [a,b]$, the corresponding hat vector $\bh_\ast$ is given by
	$$
		\bh_\ast = 
		\begin{cases}
			\mathbf{e}_1, & a \leq x_\ast < x_1,\\
			K(z_{\ast,i})\mathbf{e}_i + [1-K(z_{\ast,i})]\mathbf{e}_{i+1}, & x_i \leq x_\ast < x_{i+1}, \; i=1,\ldots,n-1,\\
			\mathbf{e}_n, & x_n \leq x_\ast \leq b,
		\end{cases}
	$$
	where $\mathbf{e}_i \in \R^n$ is the $i$th standard basis vector in the Euclidean space. The predictive model degrees of freedom can then be evaluated. For more details about the derivation, see Appendix \ref{det: df_wgt_funcs}. In Table \ref{tab: df_wgt_funcs}, we show the ratio of the predictive model degrees of freedom to the sample size $n$ as $n \to \infty$. Among the interpolating models with these four schemes, model I is the most ``complex'' with its predictive model degrees of freedom equal to $n$. One interpretation of it is that the constant weight scheme partitions the entire feature space $[a,b]$ into $n$ disjoint neighborhoods, with each one associated with a particular predicted value. Model II-IV are ``simpler'' than model I in terms of the predictive model degrees of freedom due to the implementation of averaging schemes. Model II is the simplest among the three. We may relate this to the geometric fact that a line segment makes the shortest path between two points. Model III and IV are similar in their degrees of freedom because $\cos(x_\ast-x_i)\approx 1-\frac{1}{2}(x_\ast-x_i)^2$ when $x_\ast$ is close to $x_i$.
	\begin{figure}[!t]
		\centering
		\includegraphics[scale = 0.45]{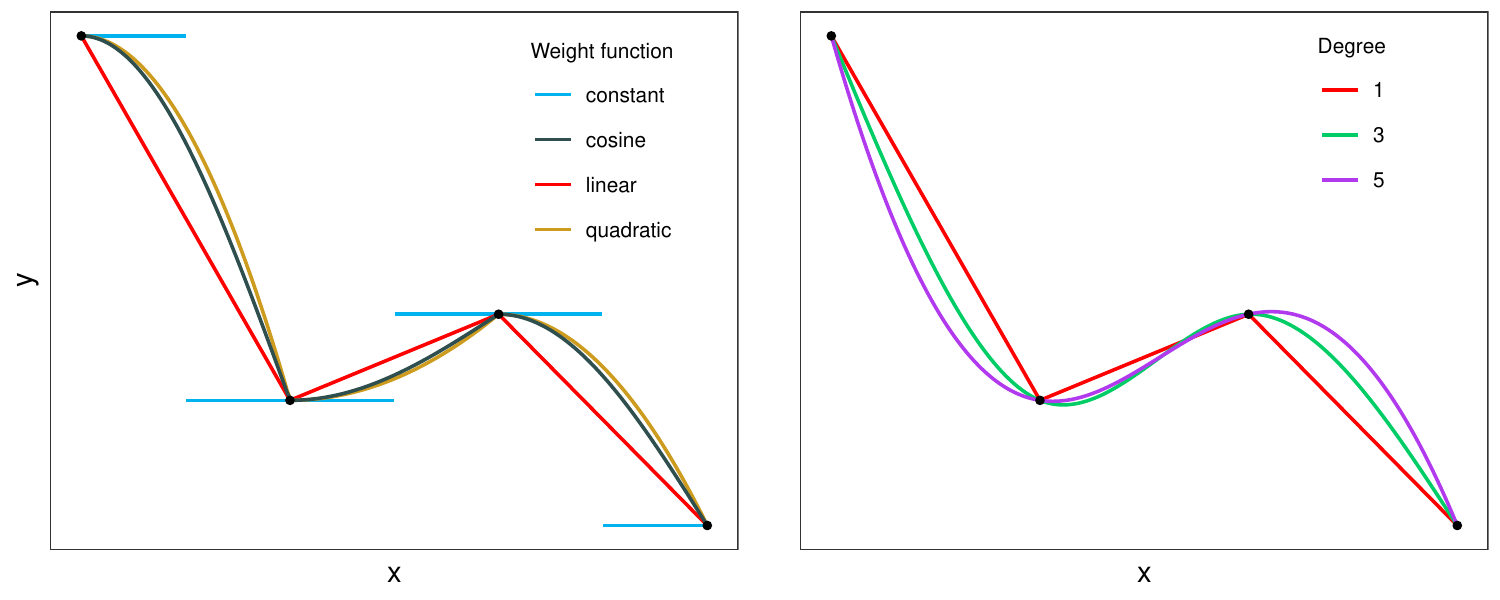}
		\caption{An illustration of interpolation with four weight schemes in Example \ref{ex: df_wgt_funcs} (left) and interpolating polynomial splines of degree 1, 3 and 5 in Example \ref{ex: spline_df} (right).}
		\label{fig: ex_interpolant_illustration}
	\end{figure}
	\begin{table}[h]
		\centering
		\caption{Ratio of $\dfR$ to $n$ for interpolating schemes I - IV as $n \to \infty$.}
		\label{tab: df_wgt_funcs}
		\begin{tabular}{c|*{4}{c}}
			\hline
			Weight &  Constant  & Linear & Quadratic & Cosine\\ \hline
			$\lim_{n \to \infty}\dfR/n$ & $1$ & $0.833$ & $0.867$ & $0.863$\\ \hline
		\end{tabular}
	\end{table}
	
\end{example}

\begin{example}[Interpolating splines] \label{ex: spline_df}
	We study the predictive model degrees of freedom for univariate interpolating polynomial splines. For simplicity, we assume $x_\ast \sim \mathrm{Uniform}(0,1)$ and $0 = x_1 < \cdots < x_n = 1$. For $s = 1,2\ldots$, let $\mathcal{M}^{s}[0,1] = \lbrace \mu \vert \int_{0}^{1} \mu^{(s)} dx < \infty \rbrace$ be the model space. Then, interpolating polynomial splines of degree $2s - 1$ can be defined. See \cite{gu2013smoothing} for more technical details. For any $x_\ast \in (0,1)$, the hat vector $\bh_\ast$ can also be obtained. See Appendix \ref{det: spline_df} for the derivation. Monte Carlo method is then used to approximate $\E(\Vert \bh_\ast \Vert^2 \vert x_1,\ldots,x_n)$ and $\dfR$. As an example, we set $n=21$ and $x_i = (i-1)/20$. We then estimate the ratio of $\dfR$ to $n$ based on 10,000 random samples from $\mathrm{Uniform}(0,1)$. As shown in Table \ref{tab: spline_df}, $\dfR$ increases with the polynomial degree and can exceed $n$. Note that the linear interpolating spline ($s=1$) is the same as the interpolant with a linear weight function in Example \ref{ex: df_wgt_funcs} over $[x_1, x_n]$, which leads to the same ratio of $0.833$.
	\begin{table}[h]
		\centering
		\caption{Ratio of $\dfR$ to $n$ for polynomial interpolating splines.}
		\label{tab: spline_df}
		\begin{tabular}{c|*{6}{c}}
			\hline
			Degree & 1 & 3 & 5 & 7 & 9 & 11\\ \hline
			$\dfR/n$ & $0.833$ & $0.932$ & $0.960$ & $0.991$ & 1.056 & 1.618\\ \hline
		\end{tabular}
	\end{table}

\end{example}

\begin{example}[Local constant smoother] \label{ex: df_vs_bw}
	In this example, we look into how the predictive model degrees of freedom changes as a function of the bandwidth $\omega$ for local constant smoother under the same setting as in Example \ref{ex: df_wgt_funcs}. Let $L_i=x_{i+1}-x_i$, $\bar{L}=\max_i L_i$ and $\ubar{L}=\min_i L_i$. Assume $\frac{1}{2}\bar{L} < \ubar{L}$. Consider the smoother
	$$
		\hat{\mu}_\ast = 
		\begin{dcases}
			y_1, & a\leq x_\ast < x_1,\\
			\frac{\sum_{i=1}^n \mathbf{1}_{\left\lbrace\vert x_\ast - x_i\vert \leq \omega\right\rbrace} y_i}{\sum_{i=1}^n\mathbf{1}_{\left\lbrace\vert x_\ast - x_i\vert\leq \omega\right\rbrace}}, & x_1 \leq x_\ast < x_n,\\
			y_n, & x_n\leq x_\ast \leq b,
		\end{dcases}
	$$
	which interpolates the training data when $\frac{1}{2}\bar{L} \leq \omega <\ubar{L}$. Define $x_0 = a - \omega$ and $x_{n+1} = b + \omega$. Note that, for $x_\ast \in [a,b]$, the $\omega$-neighborhood of $x_\ast$ may contain either one or two $x_i$'s in the training data.
	When $x_{i-1} + \omega < x_\ast < x_{i+1} - \omega$, only $x_i$ is in the neighborhood, whereas both $x_i$ and $x_{i+1}$ are in when $x_{i+1} - \omega \leq x_\ast \leq x_{i} + \omega$. Then, the hat vector for $x_\ast \in [a,b]$ is given by
	$$
		\bh_\ast =
		\begin{cases}
			\mathbf{e}_i, & x_{i-1} + \omega < x_\ast < x_{i+1} - \omega, \;i = 1,\ldots,n\\
			\frac{1}{2}\mathbf{e}_i + \frac{1}{2}\mathbf{e}_{i+1}, & x_{i+1} - \omega \leq x_\ast \leq x_{i} + \omega, \;i = 1,\ldots,n-1.
		\end{cases}
	$$
	Assume $x_\ast \sim \mathrm{Uniform}(a,b)$. In \ref{det: df_vs_bw}, we show that
	$$
		\dfR(\omega) =  n + \frac{n(x_n - x_1)}{4(b-a)} - \frac{n(n-1)}{2(b-a)}\omega.
	$$
	Thus, $\dfR$ decreases linearly in $\omega$.
	
	Figure \ref{fig: ex_df_vs_bw} compares $\dfR$ and $\dfF$ as a function of the bandwidth $\omega$ when $x_1,\ldots,x_n$ are equally spaced ($\bar{L} = \ubar{L} \equiv L$) with $x_1 = a$ and $x_n = b$. We see that $\dfR$ can indeed differentiate interpolating models when $\frac{L}{2} \leq \omega < L$. For $\omega \geq L$, the smoother does not interpolate the training data. In particular, when $L \leq \omega < x_n-x_1$, $\dfR$ is strictly decreasing and piecewise linear in $\omega$ while $\dfF$ is piecewise constant. When $\omega\geq x_n-x_1$, it can be shown that $\dfR\xrightarrow{\rm P} 1 = \dfF$ as $n\to\infty$.
	\begin{figure}[t!]
		\centering
		\includegraphics[scale = 0.6]{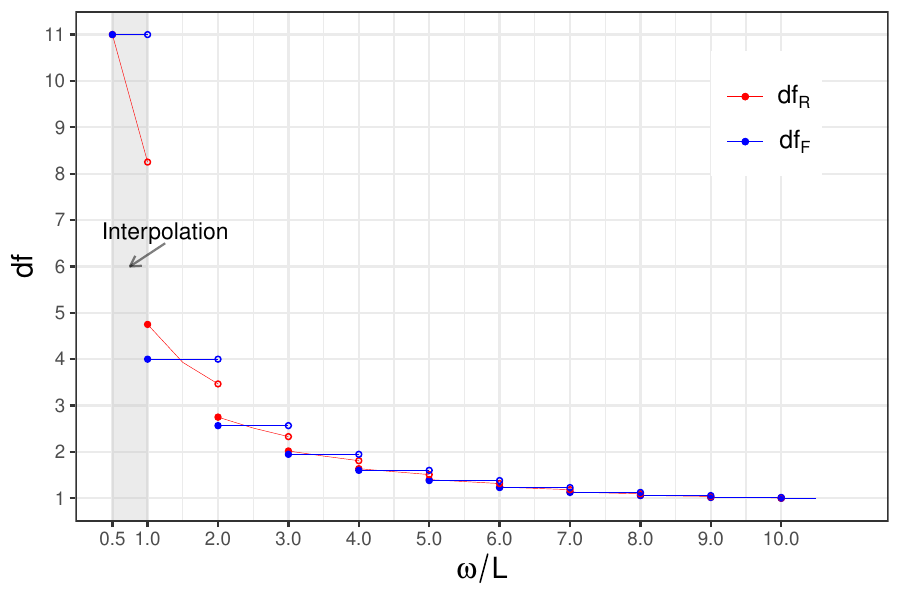}
		\caption{Comparison of $\dfR$ and $\dfF$ as a function of the bandwidth for univariate local constant smoother when $x_1,\ldots,x_n$ ($n=11$) are equally spaced ($\protect\ubar{L} = \bar{L} = L$) with $x_1 = a$ and $x_n = b$.}
		\label{fig: ex_df_vs_bw}
	\end{figure}
\end{example}

\section{Least Squares Method in Subset Regression} \label{sec: ls_method}
In this section, we provide an in-depth analysis of the predictive model degrees of freedom for the least squares method in the context of subset regression. Suppose that  $\cS$ is a subset of variable indices $\mathcal{D} \coloneqq \left\lbrace 1,\ldots, d\right\rbrace$ with $\vert \cS \vert = p$. For $i = 1,\ldots, n$, let $\bx_{i,\cS} = (x_{ij})_{j\in \cS}$ denote the subvector of $\bx_i$ corresponding to $\cS$ and $\bSigma_\cS = \var(\bx_{i,\cS})$, the submatrix of $\bSigma$ for the variables in $\cS$. Let $\bx_{(j)}$ be the $j$th column of $\bX$ and $\bX_\cS = (\bx_{(j)})_{j \in \cS} \in \R^{n \times p}$.

Let $\hat{\bbeta}(\cS)$ be a least squares estimator using $\bX_\cS$ as a design matrix.  When $p \leq n$ (underparameterized regime), we assume that $\bX_\cS$ has full column rank and apply the ordinary least squares method to get
$$
	\hat{\bbeta}(\cS) = (\bX_\cS^\T \bX_\cS)^{-1} \bX_\cS^\T\by.
$$
When $p > n$ (overparameterized regime), we consider the minimum-$\ell_2$-norm least squares method and obtain $\hat{\bbeta}(\cS)$ by solving the optimization problem:
$$
	\min_{\mathbf{b} \in \R^p} \Vert \mathbf{b} \Vert_2^2, \quad \text{subject to } \by = \bX_\cS \mathbf{b}.
$$
We assume that $\bX_\cS$ has full row rank in this case so that the solution is unique and can be explicitly expressed as
$$
	\hat{\bbeta}(\cS) = \bX_\cS^\T (\bX_\cS \bX_\cS^\T)^{-1} \by.
$$

For each $\bx_\ast \in \R^d$, using the identity $\hat{\mu}_\ast = \bx_{\ast, \cS}^\T \hat{\bbeta}(\cS) = \bh_\ast^\T \by$, we can define the hat vector $\bh_\ast$ for $\bx_\ast$ as 
\begin{equation} \label{eq: coef_ls}
	\bh_\ast = 
	\begin{dcases}
		\bX_\cS(\bX_\cS^\T \bX_\cS)^{-1}\bx_{\ast,\cS}, & p\leq n,\\
		(\bX_\cS \bX_\cS^\T)^{-1} \bX_\cS \bx_{\ast,\cS}, & p > n.
	\end{dcases}
\end{equation}
Under the assumptions A0---A3, it is then easy to obtain the predictive model degrees of freedom for the subset regression model as
\begin{equation}\label{eq: dfR_ls}
	\dfR(\cS) = 
	\begin{dcases}
		\frac{p}{2} + \frac{n}{2}\trace[(\bX_\cS^\T \bX_\cS)^{-1} \bSigma_\cS], & p\leq n,\\
		\frac{n}{2} + \frac{n}{2}\trace[\bX_\cS^\T(\bX_\cS \bX_\cS^\T)^{-2} \bX_\cS \bSigma_\cS], & p > n.
	\end{dcases}
\end{equation}
In the following analysis, we will use $\dfR(\cS)$, $\dfR(\bX_\cS)$ and $\dfR(p)$ interchangeably. It is also worth noting that, while \eqref{eq: dfR_ls} is derived under the assumption that $\E(\bx_\ast) = \mathbf{0}$, it can be generalized to an arbitrary mean $\bm{\nu} \in \R^d$ by simply replacing the covariance matrix $\bSigma_\cS$ with the second moment matrix, $\bSigma_\cS + \bm{\nu}_\cS \bm{\nu}_\cS^\T$.

In Sections \ref{subsec: monotonicity} and \ref{subsec: df_asymptotics}, we first study the monotonicity and asymptotics of the predictive model degrees of freedom $\dfR$. In Section \ref{subsec: double_descent}, we revisit the double descent phenomenon and show that it can be reconciled with the classical theory of the bias-variance trade-off by parameterizing the risk with the predictive model degrees of freedom.

\subsection{Monotonicity} \label{subsec: monotonicity}
In this subsection, we discuss the monotonicity of the predictive model degrees of freedom as a function of $p$ in the underparameterized and overparameterized regimes separately. In particular, we will show that $\dfR$ is strictly increasing when $p < n$ and generally decreasing when $p > n$.

\subsubsection{Underparameterized Regime}

As shown in \eqref{eq: dfR_ls}, $\trace[(\bX_\cS^\T \bX_\cS)^{-1} \bSigma_\cS]$ is pivotal to the predictive model degrees of freedom when $p < n$. We first state a useful result in linear algebra regarding this trace term with a rank-one change. The proof can be found in \ref{pf: useful_lemma}. 
\begin{lemma} \label{lemma: trace_monotonicity_under}
	Let $\bX \in\R^{n\times p}$, $\mathbf{w} \in \R^n$ and $\tilde{\bX} = (\bX, \mathbf{w})$. Assume $\mathrm{rank}(\bX) = p < n$ and $\mathrm{rank}(\tilde{\bX}) = p + 1$. Let $\bB \in\R^{p\times p}$ be symmetric and positive definite. For $\ba \in \R^p$ and $b^2 > 0$, define
	$$
		\tilde{\bB} =
		\begin{pmatrix}
			\bB &  \ba\\
			\ba^\T & b^2
		\end{pmatrix}.
	$$
	If $\tilde{\bB}\in\R^{(p+1)\times (p+1)}$ is positive semi-definite, then
	\begin{equation} \label{eq: df_ls_inequality}
		\trace[(\tilde{\bX}^\T \tilde{\bX})^{-1}\tilde{\bB}] \geq \trace[(\bX^\T \bX)^{-1}\bB].
	\end{equation}
	In particular, if $\tilde{\bB}$ is positive definite, the inequality above is strict.
\end{lemma}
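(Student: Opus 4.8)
The plan is to reduce the inequality to the positive semi-definiteness of $\tilde{\bB}$ by inverting the bordered Gram matrix $\tilde{\bX}^\T \tilde{\bX}$ explicitly through its Schur complement. First I would introduce the block notation $M = \bX^\T \bX \in \R^{p \times p}$, $\mathbf{c} = \bX^\T \mathbf{w} \in \R^p$, and $d = \mathbf{w}^\T \mathbf{w}$, so that $\tilde{\bX}^\T \tilde{\bX}$ is the bordered matrix with diagonal blocks $M$ and $d$ and off-diagonal blocks $\mathbf{c}$, $\mathbf{c}^\T$. Since $\mathrm{rank}(\bX) = p$, the matrix $M$ is positive definite; since $\mathrm{rank}(\tilde{\bX}) = p + 1$, the full Gram matrix $\tilde{\bX}^\T \tilde{\bX}$ is positive definite. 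In particular its Schur complement $s = d - \mathbf{c}^\T M^{-1} \mathbf{c}$ is strictly positive, which I will use both to guarantee the inverse exists and to control the sign of the leading factor below.

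Second, I would apply the standard block-inverse (Schur complement) formula to write $(\tilde{\bX}^\T \tilde{\bX})^{-1}$ in terms of $M^{-1}$, $\mathbf{c}$, and $s$, then compute $\trace[(\tilde{\bX}^\T \tilde{\bX})^{-1}\tilde{\bB}]$ by summing the traces of the two diagonal blocks of the product. The cross terms coming from the off-diagonal blocks of $\tilde{\bB}$ combine cleanly, and after collecting terms I expect to obtain the difference formula
$$
	\trace[(\tilde{\bX}^\T \tilde{\bX})^{-1}\tilde{\bB}] - \trace[(\bX^\T \bX)^{-1}\bB] = \frac{1}{s}\left(\mathbf{u}^\T \bB \mathbf{u} - 2\ba^\T \mathbf{u} + b^2\right),
$$
where $\mathbf{u} = M^{-1}\mathbf{c}$. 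The term $\trace(M^{-1}\bB)$ from the top-left block exactly cancels $\trace[(\bX^\T \bX)^{-1}\bB]$, leaving only the $1/s$-scaled remainder.

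The final and key observation is that the bracketed remainder is itself a quadratic form of $\tilde{\bB}$:
$$
	\mathbf{u}^\T \bB \mathbf{u} - 2\ba^\T \mathbf{u} + b^2 = \begin{pmatrix} \mathbf{u}^\T & -1 \end{pmatrix} \tilde{\bB} \begin{pmatrix} \mathbf{u} \\ -1 \end{pmatrix}.
$$
Because $\tilde{\bB}$ is positive semi-definite, this quantity is nonnegative, and since $s > 0$, the difference is nonnegative, proving \eqref{eq: df_ls_inequality}. For the strict statement, I note that the test vector $(\mathbf{u}^\T, -1)^\T$ is nonzero (its last entry is $-1$), so if $\tilde{\bB}$ is positive definite the quadratic form is strictly positive and the inequality is strict. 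I expect the only delicate step to be the bookkeeping in the block trace computation — keeping track of the $\mathbf{c}^\T M^{-1}\bB M^{-1}\mathbf{c}$, $-2\ba^\T M^{-1}\mathbf{c}$, and $b^2$ contributions and confirming they assemble into the single quadratic form above; the conceptual content, namely choosing the test vector $(\mathbf{u}^\T,-1)^\T$ to invoke the semi-definiteness of $\tilde{\bB}$, is the crux.
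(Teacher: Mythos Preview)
Your proposal is correct and the computation checks out: with $\mathbf{u} = (\bX^\T\bX)^{-1}\bX^\T\mathbf{w}$ the trace difference is exactly $\frac{1}{s}\bigl(\mathbf{u}^\T\bB\mathbf{u} - 2\ba^\T\mathbf{u} + b^2\bigr) = \frac{1}{s}\,(\mathbf{u}^\T,\,-1)\,\tilde{\bB}\,(\mathbf{u}^\T,\,-1)^\T \ge 0$, with strict inequality when $\tilde{\bB}$ is positive definite because the test vector has a nonzero last coordinate. The route, however, is not the one the paper takes.

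The paper also begins from the Schur-complement block inverse of $\tilde{\bX}^\T\tilde{\bX}$ (with $g = 1/s$), but instead of recognizing the remainder as a single quadratic form in $\tilde{\bB}$, it splits $\tilde{\bB}$ as $\mathbf{K} + (b^2 - \ba^\T\bB^{-1}\ba)\,\mathbf{e}_{p+1}\mathbf{e}_{p+1}^\T$, where $\mathbf{K}$ replaces $b^2$ by $\ba^\T\bB^{-1}\ba$. This yields two nonnegative pieces: $g(b^2 - \ba^\T\bB^{-1}\ba)$, which is $\ge 0$ by the Schur-complement characterization of the positive semi-definiteness of $\tilde{\bB}$, and $g(\mathbf{w}-\bX\bB^{-1}\ba)^\T\mathbf{C}(\mathbf{w}-\bX\bB^{-1}\ba)$ with $\mathbf{C} = \bX(\bX^\T\bX)^{-1}\bB(\bX^\T\bX)^{-1}\bX^\T$ positive semi-definite. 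Your argument is more economical---one quadratic form, one appeal to semi-definiteness---while the paper's two-term decomposition separates the contribution of the Schur complement of $\tilde{\bB}$ from a term living in $\R^n$; that finer split is what the paper reuses verbatim in the proof of Theorem~\ref{thm: dfR_increment} to obtain the explicit increment formula. Either argument proves the lemma; yours is slicker for the lemma alone, the paper's pays off downstream.
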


As a direct application of the lemma, the following result characterizes the monotonicity of $\dfR$ in subset size $p$ in subset regression when $p < n$. 
\begin{theorem} \label{thm: df_ls_monotonicity}
	Let $\cS_1$ and $\cS_2$ be two subsets of $\mathcal{D}$. If $\cS_1 \subset \cS_2$, $\vert \cS_2 \vert \leq n$, and $\bSigma_{\cS_2}$ is positive definite, then
	\begin{equation} \label{eq: df_ls_monotonicity}
		\dfR(\cS_1) < \dfR(\cS_2).
	\end{equation}
\end{theorem}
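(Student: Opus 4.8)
The plan is to reduce the general statement to the single-variable case handled by Lemma \ref{lemma: trace_monotonicity_under} and then chain the one-step inequalities. First I would observe that since $\bSigma_{\cS_2}$ is positive definite and $\bSigma_{\cS_1}$ is one of its principal submatrices, every $\bSigma_\cT$ with $\cS_1 \subseteq \cT \subseteq \cS_2$ is also positive definite; moreover, because $\bX_{\cS_2}$ has full column rank (as assumed for the ordinary least squares fit when $|\cS_2|\le n$), so does every $\bX_\cT$ obtained by deleting columns. Writing $\cS_2 \setminus \cS_1 = \lbrace k_1, \ldots, k_m\rbrace$, I would build the increasing chain $\cS_1 = \cT_0 \subset \cT_1 \subset \cdots \subset \cT_m = \cS_2$ with $\cT_j = \cT_{j-1} \cup \lbrace k_j\rbrace$, so that it suffices to prove $\dfR(\cT_{j-1}) < \dfR(\cT_j)$ for each $j$ and sum the strict inequalities.

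For a single step, from $\cS = \cT_{j-1}$ of size $p$ to $\cS' = \cS \cup \lbrace k\rbrace$ of size $p+1$, I would apply the lemma with $\bX = \bX_\cS$, $\mathbf{w} = \bx_{(k)}$ (so that $\tilde{\bX} = \bX_{\cS'}$ once the new index is reordered to the last coordinate), $\bB = \bSigma_\cS$, and
\begin{equation*}
	\tilde{\bB} = \bSigma_{\cS'} = \begin{pmatrix} \bSigma_\cS & \ba \\ \ba^\T & b^2 \end{pmatrix},
\end{equation*}
where $\ba$ collects the covariances between the variables in $\cS$ and $x_k$, and $b^2 = \var(x_k)$. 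The hypotheses are met: $\mathrm{rank}(\bX_\cS) = p < |\cS_2| \le n$, $\mathrm{rank}(\bX_{\cS'}) = p+1$, $\bSigma_\cS$ is symmetric positive definite, and $\tilde{\bB} = \bSigma_{\cS'}$ is positive definite as a principal submatrix of $\bSigma_{\cS_2}$. The lemma then delivers the strict inequality $\trace[(\bX_{\cS'}^\T \bX_{\cS'})^{-1}\bSigma_{\cS'}] > \trace[(\bX_\cS^\T \bX_\cS)^{-1}\bSigma_\cS]$.

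Finally, using the closed form \eqref{eq: dfR_ls} in the underparameterized regime,
\begin{equation*}
	\dfR(\cS') - \dfR(\cS) = \frac{1}{2} + \frac{n}{2}\left(\trace[(\bX_{\cS'}^\T \bX_{\cS'})^{-1}\bSigma_{\cS'}] - \trace[(\bX_\cS^\T \bX_\cS)^{-1}\bSigma_\cS]\right),
\end{equation*}
and both summands are positive, whence $\dfR(\cS) < \dfR(\cS')$; chaining over the $m$ steps yields \eqref{eq: df_ls_monotonicity}. I do not expect a serious analytic obstacle, since the substantive content is carried entirely by the lemma. The one point requiring care is the bookkeeping needed to present $\bSigma_{\cS'}$ in the exact bordered form the lemma demands: one must invoke a symmetric permutation to move the new index to the last coordinate and note that $\trace[(\bX_{\cS'}^\T \bX_{\cS'})^{-1}\bSigma_{\cS'}]$ is invariant under this relabeling. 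It is also worth remarking that the $p/2$ term alone already forces a strict increase, so the essential role of the lemma is to guarantee that the trace term does not decrease and cancel this gain.
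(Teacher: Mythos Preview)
Your proposal is correct and follows essentially the same approach as the paper: reduce to the case $\cS_2 = \cS_1 \cup \lbrace j\rbrace$ by chaining, then apply Lemma~\ref{lemma: trace_monotonicity_under} to obtain the strict trace inequality, which immediately yields $\dfR(\cS_1) < \dfR(\cS_2)$ via \eqref{eq: dfR_ls}. Your write-up is more explicit than the paper's about hypothesis verification and the permutation bookkeeping, but the underlying argument is identical.
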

\begin{proof}
	Let $j \in \mathcal{D} \backslash \cS_1$. It suffices to show \eqref{eq: df_ls_monotonicity} for $\cS_2 = \cS_1 \cup \left\lbrace j \right\rbrace$. Since $\bSigma_{\cS_2}$ is positive definite, by Lemma \ref{lemma: trace_monotonicity_under}, we have
	$$
		\trace[(\bX_{\cS_2}^\T \bX_{\cS_2})^{-1} \bSigma_{\cS_2}] > \trace[(\bX_{\cS_1}^\T \bX_{\cS_1})^{-1} \bSigma_{\cS_1}],
	$$
	which implies \eqref{eq: df_ls_monotonicity} immediately.
\end{proof}

Theorem \ref{thm: df_ls_monotonicity} says that, for a sequence of nested subsets $\cS_1 \subset \cdots \subset \cS_n$ such that $\vert \cS_p \vert = p$, $\dfR$ is strictly increasing in $p$ as long as $\bSigma_{\cS_n}$ is positive definite. Further, the following result gives the increment in $\dfR$ when a new variable is added.
\begin{theorem}\label{thm: dfR_increment}
	Let $\cS_1$ be a subset of $\mathcal{D}$ with $\vert \cS_1 \vert < n$. For $j \in \mathcal{D}\backslash \cS_1$, let $\cS_2 = \cS_1 \cup \lbrace j \rbrace$ and
	$$
		\bSigma_{\cS_2} =
		\begin{pmatrix}
			\bSigma_{\cS_1} & \bSigma_{\cS_1,j}\\
			\bSigma_{\cS_1,j}^\T & \sigma_j^2
		\end{pmatrix}.
	$$
	Assume that $\bX_{\cS_2}$ has full column rank, and $\bSigma_{\cS_2}$ is positive definite. Define
	$$
		\bm{\zeta} = \frac{\bx_{(j)} - \bX_{\cS_1} \bSigma_{\cS_1}^{-1 } \bSigma_{\cS_1, j}}{\sqrt{\sigma_j^2 - \bSigma_{\cS_1,j}^\T \bSigma_{\cS_1}^{-1} \bSigma_{\cS_1,j}}}.
	$$ 
	Then
	$$
		\dfR(\cS_2) - \dfR(\cS_1) = \frac{1}{2} + \frac{n}{2} \frac{\bm{\zeta}^\T \mathbf{C} \bm{\zeta} + 1}{\bm{\zeta}^\T (\bI_n - \bH) \bm{\zeta}}.
	$$
	where $\mathbf{C} = \bX_{\cS_1} (\bX_{\cS_1}^\T \bX_{\cS_1})^{-1} \bSigma_{\cS_1} (\bX_{\cS_1}^\T \bX_{\cS_1})^{-1}\bX_{\cS_1}^\T$ and $\bH = \bX_{\cS_1}(\bX_{\cS_1}^\T \bX_{\cS_1})^{-1} \bX_{\cS_1}^\T$.
\end{theorem}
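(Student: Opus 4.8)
The plan is to reduce the statement to a single trace increment and then evaluate it by a block-matrix computation. Writing $\bA = \bX_{\cS_1}^\T \bX_{\cS_1}$ and using the underparameterized formula in \eqref{eq: dfR_ls}, the difference $\dfR(\cS_2) - \dfR(\cS_1)$ splits as $\tfrac12(\vert\cS_2\vert - \vert\cS_1\vert) + \tfrac{n}{2}\big[\trace((\bX_{\cS_2}^\T\bX_{\cS_2})^{-1}\bSigma_{\cS_2}) - \trace(\bA^{-1}\bSigma_{\cS_1})\big]$. Since $\vert\cS_2\vert - \vert\cS_1\vert = 1$, the leading $\tfrac12$ appears immediately, and the whole problem becomes showing that the bracketed trace increment equals $(\bm{\zeta}^\T\mathbf{C}\bm{\zeta} + 1)/(\bm{\zeta}^\T(\bI_n - \bH)\bm{\zeta})$.

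To evaluate that increment, I would write $\bX_{\cS_2}^\T\bX_{\cS_2}$ as the $2\times 2$ block matrix with blocks $\bA$, $\bX_{\cS_1}^\T\bx_{(j)}$, and $\bx_{(j)}^\T\bx_{(j)}$, and invert it with the standard block formula whose Schur complement is $s := \bx_{(j)}^\T(\bI_n - \bH)\bx_{(j)}$ (positive because $\bX_{\cS_2}$ has full column rank). Multiplying the block inverse by the corresponding block form of $\bSigma_{\cS_2}$ and taking the trace, the $\trace(\bA^{-1}\bSigma_{\cS_1})$ term cancels and, writing $\bm{\gamma} = \bA^{-1}\bX_{\cS_1}^\T\bx_{(j)}$, the increment collapses to $s^{-1}\big(\bm{\gamma}^\T\bSigma_{\cS_1}\bm{\gamma} - 2\bm{\gamma}^\T\bSigma_{\cS_1,j} + \sigma_j^2\big)$. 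This part is routine: the cross term comes from the off-diagonal blocks of the inverse (which carry the factor $-1/s$), and the $\bm{\gamma}^\T\bSigma_{\cS_1}\bm{\gamma}$ term from the rank-one correction to the top-left block.

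The crux is to recognize the numerator $\bm{\gamma}^\T\bSigma_{\cS_1}\bm{\gamma} - 2\bm{\gamma}^\T\bSigma_{\cS_1,j} + \sigma_j^2$ as a completed square. Setting $\bu = \bx_{(j)} - \bX_{\cS_1}\bSigma_{\cS_1}^{-1}\bSigma_{\cS_1,j}$ (so that $\bm{\zeta} = \bu/\tau$ with $\tau^2 = \sigma_j^2 - \bSigma_{\cS_1,j}^\T\bSigma_{\cS_1}^{-1}\bSigma_{\cS_1,j} > 0$), one checks that $(\bX_{\cS_1}^\T\bX_{\cS_1})^{-1}\bX_{\cS_1}^\T\bu = \bm{\gamma} - \bSigma_{\cS_1}^{-1}\bSigma_{\cS_1,j}$, whence $\bu^\T\mathbf{C}\bu = \bm{\gamma}^\T\bSigma_{\cS_1}\bm{\gamma} - 2\bm{\gamma}^\T\bSigma_{\cS_1,j} + \bSigma_{\cS_1,j}^\T\bSigma_{\cS_1}^{-1}\bSigma_{\cS_1,j}$. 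Subtracting this last term from $\sigma_j^2$ produces exactly $\tau^2$, so the numerator equals $\bu^\T\mathbf{C}\bu + \tau^2$. For the denominator, since $(\bI_n - \bH)\bX_{\cS_1} = \mathbf{0}$ we have $(\bI_n - \bH)\bu = (\bI_n - \bH)\bx_{(j)}$, and idempotence of $\bI_n - \bH$ gives $\bu^\T(\bI_n - \bH)\bu = \bx_{(j)}^\T(\bI_n - \bH)\bx_{(j)} = s$. Hence the increment is $(\bu^\T\mathbf{C}\bu + \tau^2)/(\bu^\T(\bI_n - \bH)\bu)$, and dividing numerator and denominator by $\tau^2$ replaces $\bu$ by $\bm{\zeta}$ and yields the claimed ratio.

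The only genuine obstacle is the completing-the-square identity in the last step: it is what makes the Schur complement $\tau^2$ of $\bSigma_{\cS_2}$ emerge and lets the bulky numerator be written through $\bm{\zeta}$ and $\mathbf{C}$, while everything else is bookkeeping with the block inverse. I would also keep track of two nondegeneracy facts used implicitly, namely $s > 0$ (so the increment is finite) and $\tau^2 > 0$ (so $\bm{\zeta}$ is well-defined), both guaranteed by the full-column-rank and positive-definiteness hypotheses.
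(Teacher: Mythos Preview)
Your proposal is correct and follows essentially the same route as the paper. The paper's proof simply cites the intermediate identity \eqref{eq: pf_trace_inequality} from the proof of Lemma~\ref{lemma: trace_monotonicity_under} (which already carries out the block-inverse computation and the completing-the-square step with $\mathbf{u} = \bX_{\cS_1}\bSigma_{\cS_1}^{-1}\bSigma_{\cS_1,j}$ and $\mathbf{C}$), whereas you redo that computation explicitly; the substance is identical.
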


Note that, for a given $j \in \mathcal{D}\backslash \cS$, $\bm{\zeta}$ is comprised of the normalized residuals from regressing the $j$th variable on the existing variables in $\cS_1$ as in partial regression. Theorem \ref{thm: dfR_increment} points out a key difference between the classical model degrees of freedom $\dfF$ and the predictive model degrees of freedom $\dfR$. Unlike $\dfF$ that always increases by 1 whenever a new variable is added, the increment in $\dfR$ depends on both the current design matrix and the new variable that is to be added, and thus varies from sample to sample. The following remark provides some more insights about $\dfR$ and its increment when $\bx_i$'s are multivariate normal. 

\begin{remark}[Normal covariates]
	Let $\cS$ be a subset of $\mathcal{D}$ with $\vert \cS \vert = p < n-1$. When $\bx_i$'s are multivariate normal, $(\bSigma_\cS^{-1/2} \bX_\cS^\T \bX_\cS \bSigma_\cS^{-1/2})^{-1}$ follows an inverse-Wishart distribution with scale matrix $\bI_p$ and degrees of freedom $n$ \citep{mardia1979multivariate}, which implies that
	$$
		\E(\trace[(\bX_\cS^\T \bX_\cS)^{-1}\bSigma_\cS]) = \E(\trace[(\bSigma_\cS^{-1/2}\bX_\cS^\T \bX_\cS \bSigma_\cS^{-1/2})^{-1}]) = \frac{p}{n-p-1}.
	$$
	Therefore,
	\begin{equation}\label{eq: dfR_normal_expectation}
		\E[\dfR(\cS)] = \frac{p}{2} + \frac{n}{2}\E(\trace[(\bX_\cS^\T \bX_\cS)^{-1}\bSigma_\cS]) = \frac{p}{2}\left(1+\frac{n}{n-p-1}\right).
	\end{equation}
	On the one hand, this inspires us to approximate $\dfR$ by 
	\begin{equation}\label{eq: df_approx_norm}
		\dfR \approx \frac{p}{2}\left(1+\frac{n}{n-p-1}\right)
	\end{equation}
	when features are jointly normal. On the other hand, the expected increment in $\dfR$ is given by
	$$
		\E[\dfR(p+1)] - \E[\dfR(p)] = \frac{1}{2} + \frac{n (n-1)}{2 (n-p-1)(n-p-2)},
	$$
	which is strictly increasing in $p$. Thus, on average, the increment in $\dfR$ grows as more variables are added.
\end{remark}
\vspace{8pt}

For two nested ordinary least squares models, Theorem \ref{thm: df_ls_monotonicity} assumes that variables in the smaller model are a proper subset of those in the larger one. In fact, similar results still hold for two models where the column space of one model is contained in that of the other. 

\begin{theorem} \label{thm: df_ls_monotonicity_general}
	Assume that $\bX \in\R^{n\times p}$ has full column rank, and $\bSigma = \var(\bx_i)$ is positive definite. For $s \leq p$, let $\mathbf{U} \in \R^{p \times s}$ be an arbitrary coefficients matrix of full column rank for linear combinations and define $\mathbf{Z} = \bX\mathbf{U}$. Then, for the ordinary least squares models based on $\bX$ and $\mathbf{Z}$ respectively,
	$$
		\dfR(\mathbf{Z}) \leq \dfR(\bX),
	$$
	and the equality holds if and only if $s = p$.
\end{theorem}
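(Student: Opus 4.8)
The plan is to reduce the comparison of the two predictive degrees of freedom to a single matrix inequality. First I would specialize the general definition \eqref{eq: df_R} to an ordinary least squares procedure on a full-column-rank design. Since the hat matrix is then the orthogonal projection onto the column space, $\trace(\bH) = \trace(\bH^\T\bH)$ equals the number of columns, while $\E(\Vert\bh_\ast\Vert^2\vert\bX) = \trace[(\text{Gram})^{-1}(\text{test second moment})]$; this recovers the underparameterized branch of \eqref{eq: dfR_ls}. Writing $\bA = \bX^\T\bX$ (symmetric positive definite), this gives
$$
	\dfR(\bX) = \frac{p}{2} + \frac{n}{2}\trace[\bA^{-1}\bSigma].
$$
For $\mathbf{Z} = \bX\mathbf{U}$ the transformed test feature is $\mathbf{U}^\T\bx_\ast$, with second moment $\mathbf{U}^\T\bSigma\mathbf{U}$ and Gram matrix $\mathbf{Z}^\T\mathbf{Z} = \mathbf{U}^\T\bA\mathbf{U}$, which is invertible because $\mathbf{Z}$ inherits full column rank $s$ from $\bX$ and $\mathbf{U}$. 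Hence, using the cyclic property of the trace,
$$
	\dfR(\mathbf{Z}) = \frac{s}{2} + \frac{n}{2}\trace[\mathbf{U}(\mathbf{U}^\T\bA\mathbf{U})^{-1}\mathbf{U}^\T\bSigma].
$$
Subtracting yields
$$
	\dfR(\bX) - \dfR(\mathbf{Z}) = \frac{p-s}{2} + \frac{n}{2}\trace\!\left[\bigl(\bA^{-1} - \mathbf{U}(\mathbf{U}^\T\bA\mathbf{U})^{-1}\mathbf{U}^\T\bigr)\bSigma\right].
$$

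The crux, which I expect to be the main obstacle, is to show that the bracketed matrix is positive semidefinite, i.e. $\bA^{-1} \succeq \mathbf{U}(\mathbf{U}^\T\bA\mathbf{U})^{-1}\mathbf{U}^\T$. I would prove this by the substitution $\mathbf{W} = \bA^{1/2}\mathbf{U}$, so that $\mathbf{U}^\T\bA\mathbf{U} = \mathbf{W}^\T\mathbf{W}$ and
$$
	\mathbf{U}(\mathbf{U}^\T\bA\mathbf{U})^{-1}\mathbf{U}^\T = \bA^{-1/2}\,\mathbf{W}(\mathbf{W}^\T\mathbf{W})^{-1}\mathbf{W}^\T\,\bA^{-1/2}.
$$
The middle factor $\mathbf{P} = \mathbf{W}(\mathbf{W}^\T\mathbf{W})^{-1}\mathbf{W}^\T$ is the orthogonal projection onto $\mathrm{col}(\mathbf{W})$, so $\mathbf{P}\preceq\bI_p$ and therefore $\bA^{-1} - \mathbf{U}(\mathbf{U}^\T\bA\mathbf{U})^{-1}\mathbf{U}^\T = \bA^{-1/2}(\bI_p - \mathbf{P})\bA^{-1/2}\succeq 0$. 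Since $\bSigma\succ 0$, the trace of the product of a positive semidefinite matrix with a positive definite one is nonnegative ($\trace(\mathbf{M}\bSigma) = \trace(\bSigma^{1/2}\mathbf{M}\bSigma^{1/2})\ge 0$ for $\mathbf{M}\succeq 0$). Combined with $\tfrac{p-s}{2}\ge 0$, this gives $\dfR(\bX) - \dfR(\mathbf{Z})\ge 0$.

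For the equality statement, I would note that the difference is a sum of two nonnegative quantities, so it vanishes only if each does. The constant part $\tfrac{p-s}{2}$ is zero exactly when $s = p$, which immediately gives the ``only if'' direction. Conversely, if $s = p$ then the full-column-rank matrix $\mathbf{U}$ is square and invertible, whence $\mathbf{U}(\mathbf{U}^\T\bA\mathbf{U})^{-1}\mathbf{U}^\T = \bA^{-1}$ and the trace term vanishes as well, so equality holds precisely when $s = p$. The only routine checks left are that $\mathbf{Z}$ has full column rank and that the transformed test second moment equals $\mathbf{U}^\T\bSigma\mathbf{U}$, both immediate from the definitions.
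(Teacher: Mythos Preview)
Your argument is correct. The key matrix inequality $\bA^{-1}\succeq\mathbf{U}(\mathbf{U}^\T\bA\mathbf{U})^{-1}\mathbf{U}^\T$ is established cleanly by the substitution $\mathbf{W}=\bA^{1/2}\mathbf{U}$ and the observation that $\mathbf{W}(\mathbf{W}^\T\mathbf{W})^{-1}\mathbf{W}^\T$ is an orthogonal projection, and the equality analysis via the two nonnegative summands is sound.

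The route, however, is genuinely different from the paper's. The paper does not prove the Loewner inequality directly; instead it completes $\mathbf{U}$ to a nonsingular $\mathbf{Q}=(\mathbf{U},\bV)\in\R^{p\times p}$, sets $\tilde{\mathbf{Z}}=\bX\mathbf{Q}$, uses the invariance $\trace[(\bX^\T\bX)^{-1}\bSigma]=\trace[(\tilde{\mathbf{Z}}^\T\tilde{\mathbf{Z}})^{-1}\mathbf{Q}^\T\bSigma\mathbf{Q}]$, and then appeals to Lemma~\ref{lemma: trace_monotonicity_under} (a rank-one block-inversion argument, applied column by column) to pass from $\tilde{\mathbf{Z}}$ down to its first $s$ columns $\mathbf{Z}$. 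Your projection argument is more self-contained and yields the full $s$-versus-$p$ comparison in one step, without needing the block-inverse computations behind Lemma~\ref{lemma: trace_monotonicity_under}; the paper's approach, on the other hand, reuses machinery already in place for Theorem~\ref{thm: df_ls_monotonicity} and makes the ``nested column spaces'' interpretation explicit. Either way the equality characterization comes out the same, since in both arguments strictness is tied to $s<p$ (in yours via $\tfrac{p-s}{2}>0$; in the paper's via the strict form of the lemma when the augmented covariance is positive definite).
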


The above theorem allows us to compare the complexity of two models lying in two nested linear spaces but with different bases. The following example regards principal component regression for illustration of the fact.

\begin{example}\label{ex: pcr}
	Let $\bX\in\mathbb{R}^{n\times p}$ be the design matrix ($p\leq n)$. Assume that $\bX^\T \bX$ has spectral decomposition $\mathbf{U} \bm{\Lambda} \mathbf{U}^\T$, where $\bm{\Lambda}=\mathrm{diag}(\lambda_1,\ldots,\lambda_p)$ with $\lambda_1 \geq \cdots \geq \lambda_p \geq 0$ and $\mathbf{U} = (\mathbf{u}_1,\ldots,\mathbf{u}_p)$ contains the corresponding eigenvectors. For $k = 1,\ldots, p$, let $\mathbf{U}_k = (\mathbf{u}_1,\ldots,\mathbf{u}_k)$ and $\mathbf{Z}_k = \bX \mathbf{U}_k$. Then $\mathbf{Z}_k\in\mathbb{R}^{n\times k}$ contains the first $k$ principal components of $\bX$. By Lemma \ref{lemma: trace_monotonicity_under}, we have
	$$
		\trace[(\mathbf{Z}_k^\T \mathbf{Z}_k)^{-1}\mathbf{U}_k^\T \bSigma \mathbf{U}_k] \leq \trace[(\mathbf{Z}_p^\T \mathbf{Z}_p)^{-1}\mathbf{U}^\T \bSigma \mathbf{U}] = \trace[(\bX^\T \bX)^{-1}\bSigma].
	$$
	Let $\mathbf{z}_{k,i}$ denote the $i$th row of $\mathbf{Z}_k $. Note that $\mathbf{U}_k$ depends on $\bX$ while the coefficients matrix $\mathbf{U}$ in Theorem \ref{thm: df_ls_monotonicity_general} doesn't. Thus, we don't have $\var(\mathbf{z}_{k,i}) = \mathbf{U}_k^\T \bSigma \mathbf{U}_k$ generally. But if $n$ is large relative to $p$, $\mathbf{U}_k^\T \bSigma \mathbf{U}_k$ provides a good estimator of $\var(\mathbf{z}_{k,i})$. Then
	$$
		\dfR(\mathbf{Z}_k) \approx \frac{k}{2} + \frac{n}{2}\trace[(\mathbf{Z}_k^\T \mathbf{Z}_k)^{-1} \mathbf{U}_k^\T \bSigma \mathbf{U}_k] \leq \frac{p}{2} + \frac{n}{2}\trace[(\bX^\T \bX)^{-1}\bSigma] = \dfR(\bX).
	$$
\end{example}

\subsubsection{Overparameterized Regime}
When $p>n$, the minimum-norm least squares method is used for estimation. Due to regularization, the model space is implicitly constrained, and this makes the sequence of constrained model spaces no longer nested beyond the interpolation threshold. As a result, $\dfR$ is not necessarily monotone in the subset size $p$. However, when the features are independent and isotropic, $\dfR$ is shown to be decreasing in $p$. 
\begin{theorem} \label{thm: df_ls_monotonicity_over}
	Assume $\var(\bx_\ast) = \sigma_x^2 \bI_d$ for some $\sigma_x > 0$. Let $\cS_1 \subset \cS_2 \subseteq \mathcal{D}$ with $\vert \cS_1\vert \geq n$. Then
	\begin{equation} \label{eq: df_ls_monotonicity_over}
		\dfR(\cS_2) \leq \dfR(\cS_1).
	\end{equation}
	In particular, if there exists $j \in \cS_2 \backslash \cS_1$ such that $\bx_{(j)}$ is not in the null space of $(\bX_{\cS_1} \bX_{\cS_1}^\T)^{-1}$, then the inequality holds strictly.
\end{theorem}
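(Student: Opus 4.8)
The plan is to reduce the statement, under the isotropy assumption, to the monotonicity of the trace of the inverse of a Gram matrix under the Loewner order. First I would substitute $\bSigma = \sigma_x^2 \bI_d$, so that $\bSigma_\cS = \sigma_x^2 \bI_{\vert\cS\vert}$ for \emph{every} index set $\cS$, into the overparameterized expression in \eqref{eq: dfR_ls} and simplify the trace term. Using $\bSigma_\cS = \sigma_x^2\bI$ together with the cyclic invariance of the trace,
$$
	\trace[\bX_\cS^\T(\bX_\cS \bX_\cS^\T)^{-2}\bX_\cS\bSigma_\cS] = \sigma_x^2\trace[(\bX_\cS\bX_\cS^\T)^{-2}\bX_\cS\bX_\cS^\T] = \sigma_x^2\trace[(\bX_\cS\bX_\cS^\T)^{-1}],
$$
whence $\dfR(\cS) = \frac{n}{2} + \frac{n\sigma_x^2}{2}\trace[(\bX_\cS\bX_\cS^\T)^{-1}]$. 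Because $\vert\cS_1\vert \geq n$ and both $\bX_{\cS_1}$ and $\bX_{\cS_2}$ have full row rank, the matrices $\bX_{\cS_1}\bX_{\cS_1}^\T$ and $\bX_{\cS_2}\bX_{\cS_2}^\T$ are $n\times n$ positive definite, hence invertible, and \eqref{eq: df_ls_monotonicity_over} becomes equivalent to $\trace[(\bX_{\cS_2}\bX_{\cS_2}^\T)^{-1}] \leq \trace[(\bX_{\cS_1}\bX_{\cS_1}^\T)^{-1}]$.

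Next I would exploit the additive structure of the Gram matrices. Writing each Gram matrix as a sum of rank-one outer products of columns of $\bX$, the inclusion $\cS_1\subset\cS_2$ gives
$$
	\bX_{\cS_2}\bX_{\cS_2}^\T = \bX_{\cS_1}\bX_{\cS_1}^\T + \sum_{j\in\cS_2\backslash\cS_1}\bx_{(j)}\bx_{(j)}^\T = \bX_{\cS_1}\bX_{\cS_1}^\T + \mathbf{M},
$$
where $\mathbf{M}\succeq 0$. Thus $\bX_{\cS_2}\bX_{\cS_2}^\T \succeq \bX_{\cS_1}\bX_{\cS_1}^\T \succ 0$ in the Loewner order. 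Invoking the standard fact that inversion reverses the Loewner order on positive definite matrices (if $\bA\succeq \bB\succ 0$ then $\bA^{-1}\preceq\bB^{-1}$) and taking traces yields the desired inequality.

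For the strict claim, I would note that $(\bX_{\cS_1}\bX_{\cS_1}^\T)^{-1}-(\bX_{\cS_2}\bX_{\cS_2}^\T)^{-1}\succeq 0$, so its trace vanishes if and only if the matrix itself is zero, i.e. $\mathbf{M}=\mathbf{0}$. Since $(\bX_{\cS_1}\bX_{\cS_1}^\T)^{-1}$ is invertible, its null space is trivial, so the hypothesis that some $\bx_{(j)}$ with $j\in\cS_2\backslash\cS_1$ lies outside it simply means $\bx_{(j)}\neq\mathbf{0}$, forcing $\mathbf{M}\neq\mathbf{0}$ and hence strict inequality. (Equivalently, one may expand the increment by Sherman--Morrison over the added columns one at a time, where the $j$th term decreases the trace by $\bx_{(j)}^\T\bA_j^{-2}\bx_{(j)}/(1+\bx_{(j)}^\T\bA_j^{-1}\bx_{(j)})$, a quantity that is positive precisely when $\bx_{(j)}\neq\mathbf{0}$.)

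The only real subtlety, rather than a genuine obstacle, is that the clean collapse of the trace term to $\trace[(\bX_\cS\bX_\cS^\T)^{-1}]$ relies essentially on isotropy: for a general $\bSigma$ the restricted block $\bSigma_\cS$ is not proportional to the identity, the cyclic reduction fails, and the Loewner comparison of $\bX_\cS\bX_\cS^\T$ no longer controls the weighted trace. This is exactly why $\dfR$ need not be monotone beyond the interpolation threshold in the absence of the isotropy assumption; everything else is a direct application of the order-reversing property of matrix inversion.
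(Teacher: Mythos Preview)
Your proof is correct and follows essentially the same route as the paper: both simplify $\dfR(\cS)$ under isotropy to $\frac{n}{2}+\frac{n\sigma_x^2}{2}\trace[(\bX_\cS\bX_\cS^\T)^{-1}]$ and then establish monotonicity of this trace. The paper proceeds by adding one column at a time and applying the Sherman--Morrison formula (exactly your parenthetical alternative), while you lead with the Loewner-order argument; both give the same inequality and the same equality characterization.
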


\begin{remark}[Isotropic normal covariates] \label{rmk: df_over_normal}
	Assume $\bx_i \in \R^p$ with $p>n+1$. If $\bx_i \sim \mathcal{N}(\mathbf{0},\bI_p)$, $(\bX \bX^\T)^{-1}$ follows an inverse-Wishart distribution with degrees of freedom $p$ and scale matrix $\bI_n$. Then $\E[\trace((\bX \bX^\T)^{-1})] = \frac{n}{p-n-1}$ and 
	$$
		\E(\dfR) = \frac{n}{2} + \frac{n}{2} \E[\trace((\bX \bX^\T)^{-1})] = \frac{n(p-1)}{2(p-n-1)},
	$$
	which is clearly decreasing in $p$.
\end{remark}

For a general covariance matrix, we conduct a simulation study as follows. Assume all the variables are normalized to have unit variance so that $\bSigma = \bm{\rho}$, where $\bm{\rho}$ is the correlation matrix of $\bx$. Without loss of generality, assume that the first $p$ columns of $\bX$, denoted by $\bX_p = (\bx_{(1)},\ldots,\bx_{(p)})$, are used to fit the model. Let $\bm{\rho}_p$ be the $p$th leading principal submatrix of $\bm{\rho}$. Then, as a function of $p$, we have
$$
	\dfR(p)=\frac{n}{2}\trace[\bX_p^\T(\bX_p \bX_p^\T)^{-2}\bX_p \bm{\rho}_p] + \frac{n}{2}.
$$
In the simulation, we take $n = 20$ and $d = 100$. We first randomly generate $M = 10,000$ correlation matrices using the method described in \cite{makalic2020efficient}. Then we draw $\bx_i$'s from $\mathcal{N}(\mathbf{0}, \bm{\rho})$. Let $\dfR^{(m)}(p)$ be the predictive model degrees of freedom based on the $m$th correlation matrix. We find that
$$
	\frac{1}{M(d-n)}\sum_{m=1}^{M}\sum_{p = n}^{d-1} \mathbf{1}_{\lbrace\dfR^{(m)}(p+1) \leq \dfR^{(m)}(p)\rbrace} = 0.9839,
$$
which suggests that \eqref{eq: df_ls_monotonicity_over} is still highly likely to hold even with correlated features. Presumably, this has to do with the use of minimum-norm least squares solution. In fact, adding more variables beyond the interpolation threshold has similar effect as imposing more regularization on the least squares problem. An intuitive explanation for this is provided by \cite{hastie2019surprises}. In general, with more variables included in the model, the components of $\hat{\beta}$ can be redistributed and reduced to achieve a smaller $\ell_2$-norm of $\hat{\bbeta}$. Thus, model complexity generally decreases as $p$ increases.

\subsection{Asymptotics} \label{subsec: df_asymptotics}
Let $\lambda_{\text{min}}(\bSigma)$ and $\lambda_{\text{max}}(\bSigma)$ be the smallest and largest eigenvalues of $\bSigma$ respectively. \cite{hastie2019surprises} showed that, for any positive definite $\bSigma\in\mathbb{R}^{p\times p}$ whose spectral distribution $F_{\bSigma}$ converges weakly to a measure $\mathcal{P}$ as $n$ and $p\to\infty$, if there exist $c_1$ and $c_2$ such that $0 < c_1 \leq \lambda_{\text{min}}(\bSigma) \leq \lambda_{\text{max}}(\bSigma) \leq c_2$ for every $p$, then as $n$ and $p\to\infty$ and $\frac{p}{n} \to\gamma$,
\begin{align}
	&\text{for $\gamma < 1$: } \trace[(\bX^\T \bX)^{-1}\bSigma] \xrightarrow{\rm a.s.} \frac{\gamma}{1-\gamma},\label{eq: tr_under_limit}\\
	&\text{for $\gamma > 1$: } \trace[\bX^\T(\bX \bX^\T)^{-2}\bX\bSigma] \xrightarrow{\rm a.s.} \lim_{z\to 0^+}\frac{v_{F_\mathcal{P}, \gamma}'(-z)}{v_{F_\mathcal{P},\gamma}^2(-z)} - 1, \label{eq: tr_over_limit}
\end{align}
where $v_{F_\mathcal{P}, \gamma}$ is the companion Stieltjes transform of the limiting spectral distribution $F_{\mathcal{P},\gamma}$ given by the Marchenko-Pastur theorem \citep{marchenko1967distribution}. 

The results above can be used to approximate $\dfR$ when $p$ and $n$ are large. They could also be used to examine the behavior of $\dfR$ under different orderings of variables. When $p < n$, we can naively substitute $\frac{p}{n}$ for $\gamma$ in \eqref{eq: tr_under_limit}. This yields
\begin{equation}\label{eq: df_under_approx}
	\dfR \approx  \frac{p}{2} + \frac{n}{2}\cdot\frac{p}{n-p} = \frac{p}{2}\left(1+\frac{n}{n-p}\right),
\end{equation}
which is asymptotically equivalent to (\ref{eq: df_approx_norm}) derived under the normality assumption. The above approximation also implies that $\dfR$ doesn't depend on the order in which variables are added to the model.

When $\gamma > 1$, it is usually not easy to write $v_{F_\mathcal{P},\gamma}$ explicitly for an arbitrary $\bSigma$. However, when $\bSigma = (1-\rho) \bI_d + \rho \mathbf{1} \mathbf{1}^\T$ with $0 \leq \rho < 1$. \cite{hastie2019surprises} showed that 
\begin{equation}
	\trace[\bX^\T(\bX \bX^\T)^{-2}\bX\bSigma] \xrightarrow{\rm a.s.} \frac{1}{\gamma - 1}
\end{equation}
as $n$ and $p\to\infty$ and $\frac{p}{n} \to \gamma > 1$. Replacing $\gamma$ with $\frac{p}{n}$, we have
\begin{equation}\label{eq: df_over_approx_equicorr}
	\dfR \approx \frac{n}{2} + \frac{n}{2}\cdot\frac{n}{p-n} = \frac{n}{2}\left(1+\frac{n}{p-n}\right).
\end{equation}
Again, the limit doesn't depend on either $\rho$ or the variable orderings. In fact, under the equicorrelation assumption, (\ref{eq: df_under_approx}) and (\ref{eq: df_over_approx_equicorr}) can be integrated into a single expression
\begin{equation}\label{eq: df_equicor_approx}
	\dfR \approx \frac{\min(p,n)}{2} + \frac{n}{2}\cdot\frac{\min(p,n)}{\vert n-p\vert} = \left(\frac{1}{2}+\frac{n}{2\vert n-p\vert}\right)\dfF, \quad p \neq n.
\end{equation}
This suggests that $\dfR > \dfF$ for $p < 2n$ and $\dfR < \dfF$ for $p>2n$ asymptotically. Figure \ref{fig: df_limit} illustrates the approximation with $\bSigma = \frac{1}{2}\bI_d + \frac{1}{2}\mathbf{1} \mathbf{1}^\T$.
\begin{figure}[t!]
	\centering
	\includegraphics[scale = 0.55]{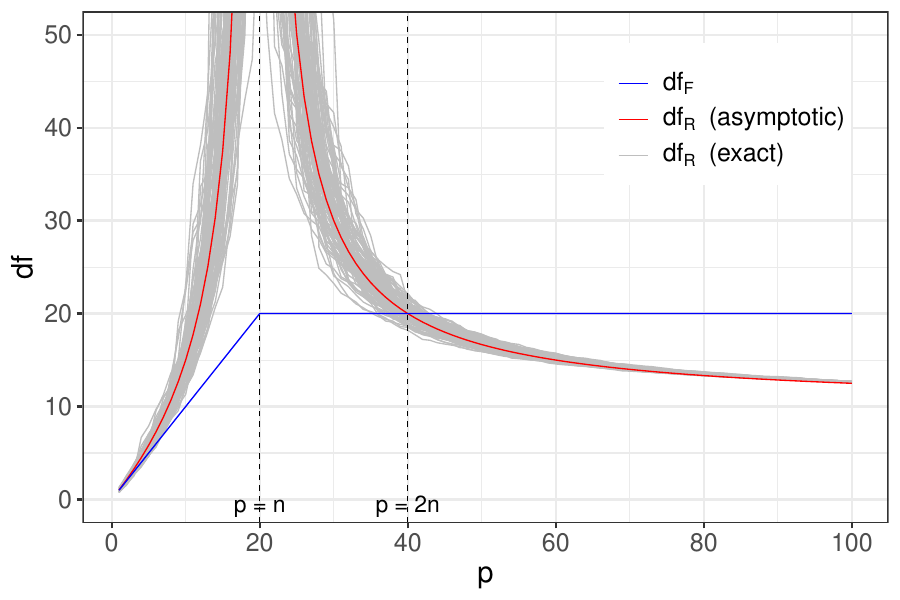}
	\caption{Predictive model degrees of freedom $\dfR$ versus the number of variables $p$ under the equal correlation setting. $\bx_1,\ldots,\bx_n$ are generated from $\mathcal{N}(0,\Sigma)$ with $n = 20$, $d = 100$, and $\bSigma = \frac{1}{2}\bI_d + \frac{1}{2}\mathbf{1} \mathbf{1}^\T$. The gray lines are $\dfR$ based on 100 randomly ordered variable sequences, whereas the red line is the approximate degrees of freedom in (\ref{eq: df_equicor_approx}) for equicorrelated features.}
	\label{fig: df_limit}
\end{figure}

\subsection{The Double Descent Phenomenon} \label{subsec: double_descent}
Recently, \cite{belkin2019two} demonstrated an interesting ``double descent'' phenomenon with least squares method that seems to defy the classical single U-shape risk curve. This has led to several follow-up works that present conditions under which overfitting can be benign or even (near) optimal. For example, \cite{bartlett2020benign} provided a finite sample characterization of overparameterized Gaussian linear models, and \cite{hastie2019surprises} analyzed the prediction risk of linear models asymptotically. In all of these works, the prediction risk is indexed by the total (or per observation) number of parameters as a proxy for model complexity. We argue that this proxy measure of model complexity requires adjustment beyond the interpolation threshold. In this subsection, we show that this fascinating phenomenon can be well reconciled with the classical risk theory if we parameterize the risk with the proposed predictive model degrees of freedom. 

We consider a similar setting as discussed in Figure 2 of \cite{belkin2019two}. Let $n=20$ and $d=100$. Assume that the true model is linear with coefficients $\beta_j$ satisfying $\beta_j \propto \frac{1}{j}$ and $\Vert \beta \Vert^2 = 10$. Also assume that  $\bx_1,\ldots,\bx_n \sim \mathcal{N}(\mathbf{0},\bI_d)$ and $\varepsilon_1,\ldots,\varepsilon_n \sim \mathcal{N}(0,1)$. Consider adding variables to the least squares model in the descending order of the coefficients. As shown in the left panel of Figure \ref{fig: reconcile_double_descent}, the ``double descent'' phenomenon can be observed when we plot the prediction error against the subset size $p$. If we instead align the prediction error against the predictive model degrees of freedom, the two side-by-side U-shape curves become folded into two similar U-shape curves defined over a comparable range of complexity.
\begin{figure}
	\centering
	\includegraphics[scale = 0.6]{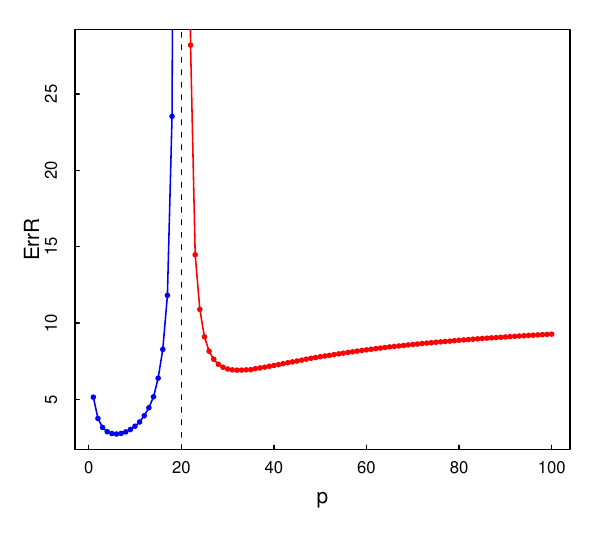}
	\includegraphics[scale = 0.6]{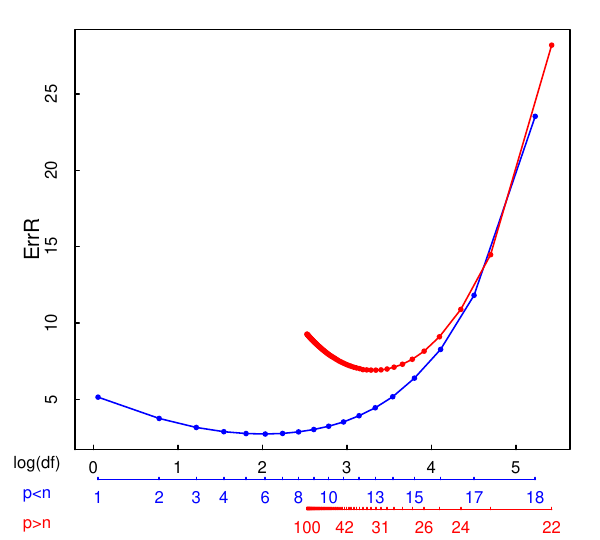}
	\caption{The out-of-sample prediction error as a function of $p$ (left) and $\log(\dfR)$ (right).}
	\label{fig: reconcile_double_descent}
\end{figure}

In terms of the structure of a model space, it is more appropriate to treat the ordinary least squares method and the minimum-norm least squares method as two different procedures, since the model space for the former is nested with an increasing dimension while that for the latter is not due to the implicit regularization. Thus, the ``double descent'' phenomenon can be well reconciled with the classical theory on bias-variance trade-off when we plot the prediction error against the predictive model degrees of freedom separately for the underparameterized and overparameterized regimes.

\section{Prediction Error Estimation For Least Squares Method} \label{sec: pred_err_est}
In this section, we aim to construct out-of-sample prediction error estimators for the least squares method using the predictive model degrees of freedom. Note that
\begin{equation}\label{eq: ErrR_est_decomp}
	\ErrR_\bX = \ErrT_\bX + \Delta B_\bX + \frac{2}{n}\sigma_\varepsilon^2 \dfR.
\end{equation}
Assume that $\sigma_\varepsilon^2$ and $\dfR$ are known. The expected training error $\ErrT_\bX$ can be conveniently estimated by its sample version $\ErrT_{\bX,\by}$. Thus, estimating the prediction error generally requires an estimator of the excess bias $\Delta B_\bX$.

We discuss two scenarios here for estimation of the excess bias. In Section \ref{subsec: risk_est_linear}, we first consider a special case where the true mean function is linear and covariates are multivariate normal. In Section \ref{subsec: risk_est_nonlinear}, we consider a general setting. It is important to note that, while the estimators we develop in this second scenario are specifically for the least squares method, the way we derive them also works for a general linear procedure. 

Throughout this section, we assume that $\cS$ is a subset of variables of size $p$.

\subsection{Linear Mean Function with Gaussian Covariates}\label{subsec: risk_est_linear}

In this subsection, we assume that $p < n$, $\mu(\bx_i;\bbeta)=\bx_i^\T\bbeta$ and $\bx_i \sim \mathcal{N}(\mathbf{0},\bSigma)$. Letting $\cS^\comp = \mathcal{D}\backslash \cS$, we define $\bSigma_\cS = \var(\bx_{i,\cS})$, $\bSigma_{\cS^\comp} = \var(\bx_{i,\cS^\comp})$, $\bSigma_{\cS,\cS^\comp}=\E(\bx_{i,\cS} \bx_{i,\cS^\comp}^\T)$, and $\bSigma_{\cS^\comp\vert \cS} = \var(\bx_{i,\cS^\comp}\vert \bx_{i,\cS}) = \bSigma_{\cS^\comp} - \bSigma_{\cS,\cS^\comp}^\T\bSigma_\cS^{-1}\bSigma_{S,\cS^\comp}$. Consider the subset regression model $\hat{\mu}_i = \bx_{i,\cS}^\T \hat{\bbeta}(\cS)$. The following proposition gives the conditional expectation of the excess bias and training error when $\bX_\cS$ is given. 
\begin{proposition}\label{prop: risk_est_linear_normal}
	Assume $\mu(\bx_i;\bbeta)=\bx_i^\T\bbeta$ and $\bx_i \sim \mathcal{N}(\mathbf{0},\bSigma)$. Then, for each $\cS \subseteq \mathcal{D}$ with $\vert \cS \vert = p < n$,
	$$
		\E(\Delta B_\bX \vert \bX_\cS) = \frac{2}{n} \sigma_\cS^2 \,\dfR(\cS) \quad \text{and} \quad \E(\ErrT_\bX \vert \bX_\cS) = \frac{n-p}{n} \sigma_{\varepsilon,\cS}^2,
	$$
	where $\sigma_{\cS}^2 = \bbeta_{\cS^\comp}^\T\bSigma_{\cS^\comp\vert \cS}\bbeta_{\cS^\comp}$ and $\sigma_{\varepsilon, \cS}^2 = \sigma_\varepsilon^2 + \sigma_\cS^2$.
\end{proposition}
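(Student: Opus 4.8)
The plan is to exploit the joint normality of the covariates to split the omitted block $\bX_{\cS^\comp}$ into a component lying in the column space of $\bX_\cS$ and an independent Gaussian residual, so that both target quantities reduce to traces of quadratic forms in that residual. Write $\bbeta=(\bbeta_\cS^\T,\bbeta_{\cS^\comp}^\T)^\T$. Conditionally on $\bX_\cS$, the normal conditional law lets me write $\bX_{\cS^\comp}=\bX_\cS\bm{\Gamma}+\bm{\Upsilon}$ with $\bm{\Gamma}=\bSigma_\cS^{-1}\bSigma_{\cS,\cS^\comp}$, where the rows of $\bm{\Upsilon}$ are i.i.d.\ $\mathcal{N}(\mathbf{0},\bSigma_{\cS^\comp\vert\cS})$ and independent of $\bX_\cS$; analogously $\bx_{\ast,\cS^\comp}=\bm{\Gamma}^\T\bx_{\ast,\cS}+\bm{\eta}_\ast$ with $\bm{\eta}_\ast\sim\mathcal{N}(\mathbf{0},\bSigma_{\cS^\comp\vert\cS})$ independent of $\bx_{\ast,\cS}$. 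The true mean vector is $\bmu=\bX_\cS\bbeta_\cS+\bX_{\cS^\comp}\bbeta_{\cS^\comp}$, and the key simplification is that the OLS hat matrix $\bH=\bX_\cS(\bX_\cS^\T\bX_\cS)^{-1}\bX_\cS^\T$ annihilates the span of $\bX_\cS$, so that $(\bI_n-\bH)\bmu=(\bI_n-\bH)\bm{\Upsilon}\bbeta_{\cS^\comp}$.

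For the training error I would start from the Random-X bias-variance decomposition of $\ErrT_\bX$ recorded earlier; since $\bH$ is a projection, $\trace(\bH^\T\bH-2\bH)=-p$, giving $\ErrT_\bX=\frac{n-p}{n}\sigma_\varepsilon^2+\frac{1}{n}\Vert(\bI_n-\bH)\bmu\Vert^2$. Using the decomposition above, $\Vert(\bI_n-\bH)\bmu\Vert^2=\bbeta_{\cS^\comp}^\T\bm{\Upsilon}^\T(\bI_n-\bH)\bm{\Upsilon}\bbeta_{\cS^\comp}$, and averaging over the independent rows of $\bm{\Upsilon}$ yields $\E[\bm{\Upsilon}^\T(\bI_n-\bH)\bm{\Upsilon}\vert\bX_\cS]=\trace(\bI_n-\bH)\,\bSigma_{\cS^\comp\vert\cS}=(n-p)\bSigma_{\cS^\comp\vert\cS}$. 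Hence $\E[\frac{1}{n}\Vert(\bI_n-\bH)\bmu\Vert^2\vert\bX_\cS]=\frac{n-p}{n}\sigma_\cS^2$, and adding the irreducible term gives $\E(\ErrT_\bX\vert\bX_\cS)=\frac{n-p}{n}(\sigma_\varepsilon^2+\sigma_\cS^2)=\frac{n-p}{n}\sigma_{\varepsilon,\cS}^2$.

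The excess bias is where the real work lies, and the step I expect to be the main obstacle, because the out-of-sample term couples the test point with the estimation error in the fitted coefficients. Using $\hat{\mu}_\ast=\bh_\ast^\T\by$ with $\bh_\ast=\bX_\cS(\bX_\cS^\T\bX_\cS)^{-1}\bx_{\ast,\cS}$, a short computation gives $\mu_\ast-\bh_\ast^\T\bmu=(\bm{\eta}_\ast^\T-\bx_{\ast,\cS}^\T\bm{\Psi})\bbeta_{\cS^\comp}$, where $\bm{\Psi}=(\bX_\cS^\T\bX_\cS)^{-1}\bX_\cS^\T\bm{\Upsilon}$ is the sampling error from regressing the omitted variables onto $\bX_\cS$. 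Averaging over the test point and using the independence of $\bm{\eta}_\ast$ and $\bx_{\ast,\cS}$ splits this into an irreducible piece and an estimation-error piece, $\E[(\mu_\ast-\bh_\ast^\T\bmu)^2\vert\bX]=\bbeta_{\cS^\comp}^\T(\bSigma_{\cS^\comp\vert\cS}+\bm{\Psi}^\T\bSigma_\cS\bm{\Psi})\bbeta_{\cS^\comp}$. The crucial identity is then the conditional expectation of the second quadratic form: again using that the rows of $\bm{\Upsilon}$ are i.i.d.\ and that $(\bX_\cS^\T\bX_\cS)^{-1}\bX_\cS^\T\bX_\cS(\bX_\cS^\T\bX_\cS)^{-1}=(\bX_\cS^\T\bX_\cS)^{-1}$, one obtains $\E[\bm{\Psi}^\T\bSigma_\cS\bm{\Psi}\vert\bX_\cS]=\trace[(\bX_\cS^\T\bX_\cS)^{-1}\bSigma_\cS]\,\bSigma_{\cS^\comp\vert\cS}$, which is exactly the trace appearing in $\dfR(\cS)$.

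Finally I would assemble the pieces. Combining the two conditional expectations gives $\E[(\mu_\ast-\bh_\ast^\T\bmu)^2\vert\bX_\cS]=\sigma_\cS^2(1+\trace[(\bX_\cS^\T\bX_\cS)^{-1}\bSigma_\cS])$, while the in-sample bias contributes $\frac{n-p}{n}\sigma_\cS^2$; subtracting yields $\E(\Delta B_\bX\vert\bX_\cS)=\frac{p}{n}\sigma_\cS^2+\sigma_\cS^2\trace[(\bX_\cS^\T\bX_\cS)^{-1}\bSigma_\cS]$. Substituting the identity $\trace[(\bX_\cS^\T\bX_\cS)^{-1}\bSigma_\cS]=\frac{2}{n}(\dfR(\cS)-\frac{p}{2})$ read off from the $p\le n$ case of \eqref{eq: dfR_ls} collapses this to $\frac{2}{n}\sigma_\cS^2\,\dfR(\cS)$, which is the claim. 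The only places demanding care are the two quadratic-form expectations, both of which follow from the i.i.d.\ Gaussian structure of the rows of $\bm{\Upsilon}$; everything else is bookkeeping.
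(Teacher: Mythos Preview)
Your proposal is correct and follows essentially the same route as the paper's proof: both exploit the conditional normal decomposition $\bX_{\cS^\comp}=\bX_\cS\bSigma_\cS^{-1}\bSigma_{\cS,\cS^\comp}+\bm{\Upsilon}$ to reduce $\Vert(\bI_n-\bH)\bmu\Vert^2$ and $(\mu_\ast-\bh_\ast^\T\bmu)^2$ to quadratic forms in the independent residual, then take expectations to obtain $(n-p)\sigma_\cS^2$ and $\sigma_\cS^2(1+\trace[(\bX_\cS^\T\bX_\cS)^{-1}\bSigma_\cS])$ respectively. The only difference is cosmetic: you introduce the intermediate object $\bm{\Psi}=(\bX_\cS^\T\bX_\cS)^{-1}\bX_\cS^\T\bm{\Upsilon}$ and compute $\E[\bm{\Psi}^\T\bSigma_\cS\bm{\Psi}\vert\bX_\cS]$ explicitly, whereas the paper collapses the same calculation in one line.
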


Applying the proposition to \eqref{eq: ErrR_est_decomp}, we immediately have 
\begin{equation}\label{eq: ErrR_Xs}
	\E(\ErrR_\bX \vert \bX_\cS) = \frac{1}{n}\sigma_{\varepsilon,\cS}^2 [n - p + 2\,\dfR(\cS)].
\end{equation}
The proposition also implies that $\hat{\sigma}_{\varepsilon,\cS}^2 = \frac{n}{n-p}\ErrT_{\bX,\by}$ is an unbiased estimator of $\sigma_{\varepsilon,S}^2$ given $\bX_\cS$. Thus, an unbiased estimator of the conditional prediction error $\E(\ErrR_\bX \vert \bX_\cS)$ is given by
\begin{equation}\label{eq: ErrR_tilde}
	\widetilde{\ErrR} = \frac{1}{n}\hat{\sigma}_{\varepsilon,\cS}^2 [n - p + 2\,\dfR(\cS)] = \ErrT_{\bX, \by} + \frac{2}{n}\hat{\sigma}_{\varepsilon,\cS}^2\,\dfR(\cS).
\end{equation}
In particular, if $\cS = \mathcal{D}$, we have $\sigma_\cS^2 = 0$ and $\sigma_{\varepsilon, \cS}^2 = \sigma_\varepsilon^2$. In this case, $\widetilde{\ErrR}$ is an unbiased estimator of $\ErrR_\bX$. We call $\widetilde{\ErrR}$ a $C_p$-type estimator since it is of the same form as Mallows's $C_p$, which adjusts the training error by a model complexity measure. 

We can compare $C_p$ and $\widetilde{\ErrR}$ in terms of the optimal model size chosen by the criteria. Figure \ref{fig: mod_select_compr} illustrates the difference between the two criteria under some assumptions on $\sigma_\cS^2$. Details on the derivation of the optimal model sizes can be found in Appendix \ref{ex: mod_select_compr}. As shown in the figure, $\widetilde{\ErrR}$ generally favors more parsimonious models than $C_p$ does. The difference in the optimal model size is more substantial when the true model is sparse, i.e., only a few variables are significantly related to the response. We believe that this is quite reasonable as out-of-sample prediction generally involves more uncertainty than in-sample prediction. Such uncertainty is reflected in the prediction error estimator through the predictive model degrees of freedom, which results in the selection of a simpler model.
\begin{figure}[t]
	\centering
	\includegraphics[scale = 0.6]{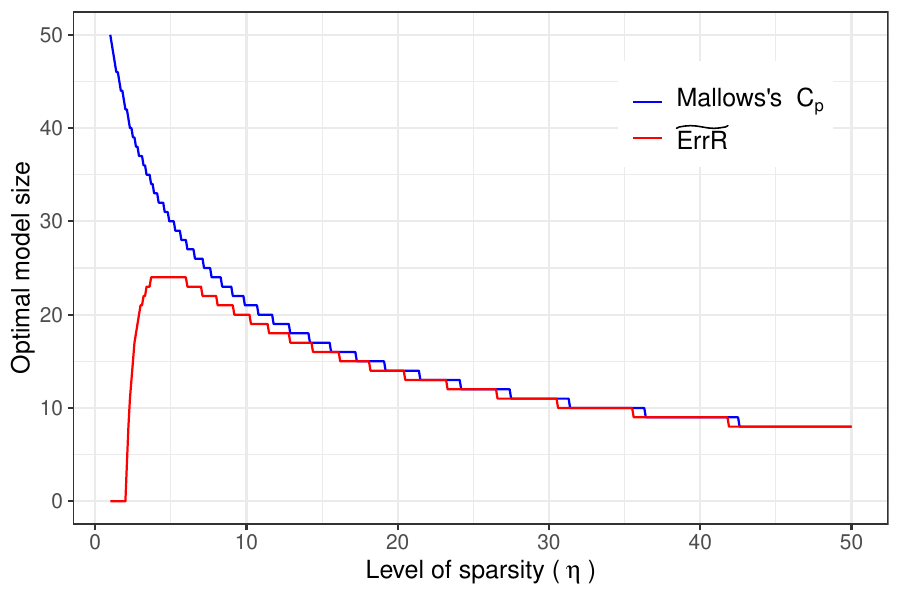}
	\caption{Comparison of the optimal model size identified via the expected Mallows's $C_p$ and $\widetilde{\ErrR}$ under the assumption that $\sigma_\cS^2 = \left(1-\frac{p}{d}\right)^\eta$ with $d=n=100$ and $\sigma_\varepsilon^2=1$ (SNR = 1). $\eta$ controls the sparsity of the model. The larger $\eta$ is, the more quickly $\sigma_\cS^2$ decays, and the more elements in $\bbeta$ are likely to be near 0.}
	\label{fig: mod_select_compr}
\end{figure}

There is also a close connection between $\widetilde{\ErrR}$ and the $S_p$ criterion \citep{tukey1967discussion, hocking1976biometrics, thompson1978selection}, the $\hat{U}_{np}$ statistic \citep{breiman1983many} and the generalized covariance penalty criterion $\widehat{\mathrm{RCp}}$ \citep{rosset2020fixed}. All these criteria estimate the unconditional prediction error $\E[(y_\ast - \hat{\mu}_\ast)^2]$ by
$$
    \hat{U}_{np} = \hat{\sigma}_{\varepsilon, \cS}^2 \left(1 + \frac{p}{n-p-1}\right) = \frac{n(n-1)}{(n-p)(n-p-1)} \ErrT_{\bX,\by}.
$$
In fact, $\widetilde{\ErrR}$ is a conditional version of $\hat{U}_{np}$ given the observed values of the set of variables in the current model. Under the normality assumption, if we replace $\dfR$ in \eqref{eq: ErrR_tilde} with its expectation \eqref{eq: dfR_normal_expectation}, we get exactly the $\hat{U}_{np}$ statistic.

\subsection{Nonlinear Mean Function} \label{subsec: risk_est_nonlinear}

In this subsection, we broaden the scope of our study on prediction error estimation by allowing a general form of the true mean function $\mu(\bx;\bbeta)$ and a general distribution of $\bx$. Our strategy is to estimate the excess bias $\Delta B_\bX$ using the leave-one-out cross validation (LOOCV) technique, which was also adopted by \cite{rosset2020fixed} in a similar context in the underparameterized regime. As a consequence, our risk estimators are closely related to the LOOCV error. In general, the estimators we develop in Section \ref{subsubsec: improved_estimators} exhibit much smaller variance than the LOOCV error. We will demonstrate this difference in Section \ref{sec: numerical_studies} through a series of numerical studies.

Consider a general linear procedure based on $(\bX_\cS, \by)$ with hat matrix $\bH$. Let $\bX_\cS^{-i}$, $\by^{-i}$ and $\bmu^{-i}$ be the corresponding terms with the $i$th record deleted. Let $\bh^{-i}_i$ denote the hat vector of $\bx_{i,\cS}$ based on $\bX_\cS^{-i}$. Conceptually, when $n$ is large, we have
\begin{equation} \label{eq: delta_approx_conceptual}
	\E[(\mu_\ast - \bh_\ast^\T\bmu)^2 \vert \bX] \approx \frac{1}{n}\sum_{i=1}^n(\mu_i - (\bh^{-i}_i)^\T \bmu^{-i})^2.
\end{equation}
Given a set of $\mu_i$'s, the right hand side of \eqref{eq: delta_approx_conceptual} can be evaluated based on the full data model alone for linear procedures. This is due to the well-known LOOCV identity for linear procedures which states that $y_i - (\bh_i^{-i})^\T \by^{-i} = (y_i - \bh_i^\T \by) / (1 - h_{ii})$ \citep{craven1978smoothing}.
Note that $\bh_i$ doesn't depend on either $\by$ nor $\bmu$. Thus, the theorem still holds if we replace $y_i$ and $\by$ with $\mu_i$ and $\bmu$, respectively. Then, we have
\begin{equation}\label{eq: delta_approx}
	\begin{aligned}
		\Delta B_\bX & = \E[(\mu_\ast - \bh_\ast^\T\bmu)^2 \vert \bX] - \frac{1}{n}\Vert \bmu - \bH\bmu\Vert^2\\
		& \approx \frac{1}{n}\sum_{i=1}^n [(\mu_i - (\bh^{-i}_i)^\T \bmu^{-i})^2 - (\mu_i - \bh_i^\T \bmu)^2]\\
		& = \frac{1}{n}\sum_{i=1}^n \left[\frac{(\mu_i - \bh_i^\T\bmu)^2}{(1-h_{ii})^2} - (\mu_i - \bh_i^\T\bmu)^2\right]\\
		& = \frac{1}{n}\bmu^\T \bA \bmu \\
		& = \frac{1}{n}\by^\T \bA \by - \frac{2}{n}\bmu^\T \bA \bm{\varepsilon} - \frac{1}{n}\bm{\varepsilon}^\T \bA \bm{\varepsilon},
	\end{aligned}
\end{equation}
where $\bA= (\bI_n - \bH)^\T \mathbf{D} (\bI_n - \bH)$ and $\mathbf{D} = \mathrm{diag}\left(\frac{1}{(1-h_{ii})^2} - 1\right)$. Since $\E(\bmu^\T \bA\bm{\varepsilon}\vert \bX) = 0$ and $\E(\bm{\varepsilon}^\T \bA \bm{\varepsilon}\vert \bX) = \sigma_\varepsilon^2 \trace(\bA)$, we can estimate $\Delta B_\bX$ by
\begin{equation} \label{eq: delta_hat}
	\hat{\delta} = \frac{1}{n}\by^\T \bA \by - \frac{1}{n}\sigma_\varepsilon^2 \trace(\bA) = \frac{1}{n}\sum_{i=1}^n \frac{h_{ii}(2-h_{ii})}{(1-h_{ii})^2} (y_i - \bh_i^\T \by)^2 - \frac{1}{n}\sigma_\varepsilon^2 \trace(\bA).
\end{equation}
Further, an estimator of $\ErrR_\bX$ is given by
\begin{equation} \label{eq: ErrR_hat}
	\widehat{\ErrR} = \mathrm{ErrT}_{\bX,\by} + \hat{\delta} + \frac{2}{n}\sigma_\varepsilon^2 \dfR =\widehat{\ErrR}_{\rm loocv} + \frac{1}{n} \sigma_\varepsilon^2 \xi_\bX,
\end{equation}
where $\widehat{\ErrR}_{\rm loocv} = \frac{1}{n} \sum_{i=1}^n \frac{(y_i - \bh_i^\T \by)^2}{(1 - h_{ii})^2}$ is the LOOCV error and $\xi_\bX = 2\,\dfR - \trace(\bA)$. Hence, $\widehat{\ErrR}$ is an adjusted version of the LOOCV error.

Since $\widehat{\ErrR}_{\rm loocv}$ is (almost) unbiased in estimating the true out-of-sample prediction error, it is then necessary to understand the meaning of the adjustment $\frac{1}{n} \sigma_\varepsilon^2 \xi_\bX$ and its effect relative to the LOOCV error. In Section \ref{subsubsec: adjustment_interpretation}, we first provide an interpretation of this adjustment in terms of the variance of prediction errors. In Sections \ref{subsubsec: risk_est_under} and \ref{subsubsec: risk_est_over}, we take the least squares method as an example and further examine the properties of the adjustment and $\widehat{\ErrR}$ in the underparameterized and overparameterized regimes separately. In summary, we find that the adjustment is generally negligible when $n$ is large, but could be extremely negative near the interpolation threshold. In Section \ref{subsubsec: improved_estimators}, we discuss possible ways to address this issue and improve our estimators.

\subsubsection{Interpretation of the Adjustment}\label{subsubsec: adjustment_interpretation}
Let $\hat{\bm{\varepsilon}} = (\hat{\varepsilon}_1, \ldots, \hat{\varepsilon}_n)^\T$ and $\hat{\bm{\varepsilon}}_{\rm loocv} = (\hat{\varepsilon}_1^{-1}, \ldots, \hat{\varepsilon}_n^{-n})^\T$ be the residual vectors of the full model and leave-one-out model respectively. Note that $\hat{\bm{\varepsilon}} = (\bI_n - \bH)\by$ and $\hat{\bm{\varepsilon}}_{\rm loocv} = \mathrm{diag}((1-h_{ii})^{-1}) (\bI_n - \bH)\by$. Then
$$
	\begin{aligned}
		\trace(\bA) 
		& = \trace[(\bI_n - \bH)^\T \mathrm{diag}((1-h_{ii})^{-2}) (\bI_n - \bH) ] - \trace[(\bI_n - \bH)^\T (\bI_n - \bH)]\\
		& = \trace[\mathrm{diag}((1-h_{ii})^{-1}) (\bI_n - \bH) (\bI_n - \bH)^\T \mathrm{diag}((1-h_{ii})^{-1})] - \trace[(\bI_n - \bH) (\bI_n - \bH)^\T]\\
		& = \frac{1}{\sigma_\varepsilon^2} (\trace[\var(\hat{\bm{\varepsilon}}_{\rm loocv})] - \trace[\var(\hat{\bm{\varepsilon}})]) \\
		& = \frac{1}{\sigma_\varepsilon^2} \sum_{i=1}^n [\var(\hat{\varepsilon}^{-i}_i \vert \bX) - \var(\hat{\varepsilon}_i \vert \bX)].
	\end{aligned}
$$

On the other hand, the predictive model degrees of freedom is defined through the excess variance of the predictions on the test data against those on the training data. Using the bias-variance decomposition of $\ErrR_\bX$ and $\ErrT_\bX$, it can be expressed as
$$
	\begin{aligned}
		\dfR 
		& = \frac{1}{2\sigma_\varepsilon^2} \left( n \E[\var(\hat{\mu}_\ast \vert \bx_\ast, \bX) \vert \bX] - \sum_{i=1}^n [\var(\hat{\mu}_i \vert \bX) - 2 \cov(y_i, \hat{\mu}_i \vert \bX)]\right)\\
		& = \frac{1}{2\sigma_\varepsilon^2} \left( n \E[\var(\hat{\varepsilon}_\ast \vert \bx_\ast, \bX) \vert \bX] -  \sum_{i=1}^n \var(\hat{\varepsilon}_i \vert \bX) \right).
	\end{aligned}
$$
Then, we have
\begin{equation}\label{eq: ErrR_cv_adjustment}
	\frac{1}{n}\sigma_\varepsilon^2 \xi_\bX = \E[\var(\hat{\varepsilon}_\ast \vert \bx_\ast, \bX) \vert \bX] - \frac{1}{n} \sum_{i=1}^n \var(\hat{\varepsilon}^{-i}_i \vert \bX). 
\end{equation}
While the adjustment $\frac{1}{n} \sigma_\varepsilon^2 \xi_\bX$ is a result of using the leave-one-out trick to estimate the excess bias $\Delta B_\bX$, it can also be interpreted as the difference between the full model and leave-one-out models in the variance of prediction errors. 

Qualitatively, the adjustment should be generally negligible on average when $n$ is large and the model is not too complex. But as model complexity increases, the variance of the prediction errors is expected to increase, and so is the adjustment. We will provide a more quantitative characterization of the expectation and variance of the adjustment in the following.

\subsubsection{Underparameterized Regime} \label{subsubsec: risk_est_under}

Assume $p < n$. For the ordinary least squares method, it is easy to show that $\trace(\bA) = \sum_{i=1}^n \frac{1}{1-h_{ii}} + p - n$ and
$$
	\xi_\bX = 2\,\dfR(\cS) + n - p - \sum_{i=1}^n \frac{1}{1-h_{ii}}.
$$
Plugging them into \eqref{eq: delta_hat} and \eqref{eq: ErrR_hat} respectively, we get
\begin{align*}
	\hat{\delta} & = \frac{1}{n} \sum_{i=1}^n \left[(y_i - \hat{\mu}_i)^2 - (1-h_{ii}) \sigma_\varepsilon^2\right]\left(\frac{1}{(1-h_{ii})^2} - 1\right),\\
	\widehat{\ErrR} & = \widehat{\ErrR}_{\rm loocv} - \frac{1}{n}\sigma_\varepsilon^2 \sum_{i=1}^n \frac{h_{ii}}{1-h_{ii}} + \frac{1}{n}\sigma_\varepsilon^2 (2\,\dfR - p).
\end{align*}
It turns out that our excess bias estimator $\hat{\delta}$ is exactly the same as $\widehat{B^+}$ defined in \citet{rosset2020fixed}. If we further replace $\dfR$ with its approximation \eqref{eq: df_under_approx}, our risk estimator $\widehat{\ErrR}$ is also asymptotically equivalent to the generalized covariance penalty criterion $\mathrm{RCP}^+$ in their work.

\begin{remark}
	Replacing $\dfR$ with its asymptotic approximation \eqref{eq: df_under_approx} in $\xi_\bX$ yields
	$$
		\xi_\bX \approx p \left(1+\frac{n}{n-p}\right) + n - p - \sum_{i=1}^n \frac{1}{1-h_{ii}}.
	$$
	Note that $\sum_{i=1}^n h_{ii} = p$. In particular, when $h_{ii}=\frac{p}{n}$ for all $i=1,\ldots,n$, we have $\xi_\bX \approx 0$. This suggests that $\widehat{\ErrR}$ and $\widehat{\ErrR}_{\rm loocv}$ are asymptotically equivalent when all the observations are equally influential.
\end{remark}

In the following, we study the expectation of $\xi_\bX$ when $\bx_i$'s are multivariate normal. We first state a lemma about the leverage $h_{ii}$ of the hat matrix $\bH$ in this setting.
\begin{lemma}\label{lemma: exp_inv_leverage}
	Assume $5 < p < n-2$. Let $\bx_1,\ldots,\bx_n \in \mathbb{R}^p$ be i.i.d. from a multivariate normal distribution. Let $\bH = (h_{ij})$ be the hat matrix of the ordinary least squares method with design matrix $\bX = (\bx_1,\ldots,\bx_n)^\T$. Then for every $i = 1,\ldots,n$,
	$$
		\E\left(\frac{1}{1-h_{ii}}\right) = \frac{n-1}{n} \frac{n-3}{n-p-2}.
	$$
	If further $p < n-4$, then
	$$
		\var\left(\frac{1}{1-h_{ii}}\right) = \left(\frac{n-1}{n}\right)^2 \frac{2(p-1)(n-3)}{(n-p-4)(n-p-2)^2}.
	$$
\end{lemma}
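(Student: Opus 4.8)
The plan is to reduce the whole statement to the exact distribution of a single leverage $h_{ii}$ and then read off the first two inverse moments of $1-h_{ii}$. By the exchangeability of $\bx_1,\dots,\bx_n$ it suffices to fix one index $i$, and since $h_{ii}=\bx_i^\T(\bX^\T\bX)^{-1}\bx_i$ is invariant under the reparametrization $\bx_j\mapsto\bSigma^{-1/2}\bx_j$, I may assume without loss of generality that the Gaussian covariates have covariance $\bI_p$. The first step is a leave-one-out/Sherman--Morrison peeling. Writing $\bX^\T\bX=\bX_{-i}^\T\bX_{-i}+\bx_i\bx_i^\T$, where $\bX_{-i}$ is the design with the $i$th row removed, the Sherman--Morrison identity yields
$$
	\frac{1}{1-h_{ii}} = 1 + \bx_i^\T(\bX_{-i}^\T\bX_{-i})^{-1}\bx_i .
$$
The crucial structural fact is that $\bx_i$ is independent of $\bX_{-i}$, so conditional on $\bX_{-i}$ the right-hand side is a quadratic form in a fresh Gaussian vector against a fixed positive definite matrix.

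The second step is to identify the resulting distribution. Conditioning on $\bX_{-i}$, the matrix $\bX_{-i}^\T\bX_{-i}$ is Wishart, and the same inverse-Wishart calculus already used for the normal covariates (and in Remark \ref{rmk: df_over_normal}) applies; the quadratic form $\bx_i^\T(\bX_{-i}^\T\bX_{-i})^{-1}\bx_i$ is then precisely a scaled Hotelling $T^2$ statistic built from an independent Gaussian vector and an independent Wishart matrix. Through the standard $T^2$-to-$F$ and $F$-to-Beta correspondences, $1-h_{ii}$ is a fixed scale multiple $c\,(1-B)$ of $1-B$ with $B$ Beta-distributed, where the shape parameters are governed by $p$ and the Wishart degrees of freedom. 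This reduces the problem to inverse moments of a Beta variate.

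The third step is the moment computation. For $Y\sim\mathrm{Beta}(\alpha,\beta)$ one has the Gamma-ratio identities $\E(Y^{-1})=\frac{\alpha+\beta-1}{\alpha-1}$ and $\E(Y^{-2})=\frac{(\alpha+\beta-1)(\alpha+\beta-2)}{(\alpha-1)(\alpha-2)}$, finite exactly when $\alpha>1$ and $\alpha>2$ respectively; these two thresholds are what produce the hypotheses $p<n-2$ for the mean and $p<n-4$ for the variance. Substituting the shape parameters and the scale $c$ gives $\E[(1-h_{ii})^{-1}]$ directly. For the variance, $\var[(1-h_{ii})^{-1}]=c^{2}\big(\E(Y^{-2})-\E(Y^{-1})^2\big)$ telescopes: writing $s=\alpha+\beta$, the cross terms in $\frac{s-2}{\alpha-2}-\frac{s-1}{\alpha-1}$ collapse to the single factor $\frac{\beta}{(\alpha-1)(\alpha-2)}$, which after substituting the parameters reproduces the stated $\frac{2(p-1)(n-3)}{(n-p-4)(n-p-2)^2}$ form.

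I expect the main obstacle to be pinning down the exact constants rather than the overall scheme. Everything above is routine once the Beta shape parameters and the scale factor $c$ are fixed, but those are delicate: they depend on the precise model convention, in particular on whether a constant term is present, which shifts the effective dimension by one and, through the centering $\bx_i-\bar{\bx}$, is what introduces the $\frac{n-1}{n}$ factor appearing in the statement, and also on the careful bookkeeping of the Wishart degrees of freedom once one observation has been removed. I would therefore carry out the peeling step on the centered design, track the $\frac{n-1}{n}$ scaling explicitly, and verify the Beta parameters against the trace constraint $\sum_i h_{ii}=\dfF$, since an off-by-one in the degrees of freedom or a misplaced centering factor is precisely what separates the claimed expression from superficially similar alternatives.
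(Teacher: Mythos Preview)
Your route is genuinely different from the paper's. The paper does not derive the leverage law at all: it simply invokes Jayakumar (2014), which gives the exact density of $h_{ii}$ as a shifted-and-scaled Beta supported on $(1/n,1)$, then applies the affine change of variables $z_{ii}=\frac{n}{n-1}h_{ii}-\frac{1}{n-1}\sim\mathrm{Beta}\!\big(\tfrac{p-1}{2},\tfrac{n-p}{2}\big)$ and reads off $\E[(1-z_{ii})^{-1}]$ and $\var[(1-z_{ii})^{-1}]$ from standard Beta inverse-moment formulas. Your Sherman--Morrison peeling plus the Hotelling/$F$/Beta chain is more constructive and self-contained, and both approaches ultimately reduce to inverse moments of a Beta variable.

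That said, there is a real gap in your plan, and it is precisely the one you flag as ``delicate.'' Your identity $\tfrac{1}{1-h_{ii}}=1+\bx_i^\T(\bX_{-i}^\T\bX_{-i})^{-1}\bx_i$ is correct for the intercept-free design, and with $\bx_i\sim\mathcal{N}(\mathbf{0},\bI_p)$ independent of $\bX_{-i}^\T\bX_{-i}\sim W_p(n-1,\bI_p)$ it gives $h_{ii}\sim\mathrm{Beta}(p/2,(n-p)/2)$ cleanly, hence $\E[(1-h_{ii})^{-1}]=(n-2)/(n-p-2)$. This is \emph{not} the stated formula. The paper's constants---the $\frac{n-1}{n}$ scale, the $n-3$ in the numerator, and the Beta shape $\tfrac{p-1}{2}$---come from the with-intercept model, as the support $(1/n,1)$ in the cited density makes explicit. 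You correctly guess this, but your proposed fix (``carry out the peeling step on the centered design'') is not routine: after centering, $\bx_i-\bar{\bx}$ is no longer independent of the remaining rows, so the Hotelling step fails as written. One has to do a second leave-one-out, expressing everything through $\bx_i-\bar{\bx}_{-i}$ and $S_{-i}=\sum_{j\neq i}(\bx_j-\bar{\bx}_{-i})(\bx_j-\bar{\bx}_{-i})^\T$, which \emph{are} independent, and then track the Sherman--Morrison correction relating $S$ to $S_{-i}$ together with the $\frac{n-1}{n}$ rescaling of $\bx_i-\bar{\bx}$ versus $\bx_i-\bar{\bx}_{-i}$. That does recover the paper's Beta parameters, but it is substantially more than the bookkeeping you anticipate, and your trace check $\sum_i h_{ii}=p$ will not distinguish the two conventions since both give $\E(h_{ii})=p/n$.
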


Using the lemma above, it is easy to obtain the following result about the expected value of $\xi_\bX$.
\begin{theorem} \label{thm: expected_adj_under}
	Assume $5 < p < n-2$. Let $\bx_1,\ldots,\bx_n \in \mathbb{R}^p$ be i.i.d. from a multivariate normal distribution. Then, for the ordinary least squares method with design matrix $\bX = (\bx_1,\ldots,\bx_n)^\T$,
	$$
		\E(\xi_\bX) \to \frac{2 - 3\gamma}{(1 - \gamma)^2},
	$$
	as $n$ and $p\to\infty$ and $\frac{p}{n}\to\gamma < 1$.
\end{theorem}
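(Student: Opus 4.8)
The plan is to turn $\E(\xi_\bX)$ into an exact rational function of $n$ and $p$ using the two expectation formulas already at hand, and then pass to the limit. First I would take the closed form derived above for the ordinary least squares method, namely $\xi_\bX = 2\,\dfR(\cS) + n - p - \sum_{i=1}^n \tfrac{1}{1-h_{ii}}$, and apply expectation term by term. Because $\bx_1,\ldots,\bx_n$ are i.i.d., the collection $(\bx_1,\ldots,\bx_n)$ is exchangeable, so the diagonal leverages $h_{11},\ldots,h_{nn}$ are identically distributed; hence Lemma \ref{lemma: exp_inv_leverage} gives $\sum_{i=1}^n \E\!\left(\tfrac{1}{1-h_{ii}}\right) = n\cdot\tfrac{n-1}{n}\tfrac{n-3}{n-p-2} = \tfrac{(n-1)(n-3)}{n-p-2}$. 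Combining this with the normal-covariate identity \eqref{eq: dfR_normal_expectation}, which yields $2\,\E(\dfR(\cS)) = p\!\left(1+\tfrac{n}{n-p-1}\right) = \tfrac{p(2n-p-1)}{n-p-1}$, expresses $\E(\xi_\bX)$ as an explicit function of $n$ and $p$ alone.

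Next I would simplify, and the crucial observation is that the first two pieces collapse: the numerator of $\tfrac{p(2n-p-1)}{n-p-1} + (n-p)$ is $p(2n-p-1) + (n-p)(n-p-1) = n^2 - n$, so these two terms reduce to $\tfrac{n(n-1)}{n-p-1}$. Subtracting the leverage contribution and factoring out $(n-1)$ then gives $\E(\xi_\bX) = (n-1)\!\left[\tfrac{n}{n-p-1} - \tfrac{n-3}{n-p-2}\right]$, and since the bracketed difference has numerator $n(n-p-2) - (n-3)(n-p-1) = 2n - 3p - 3$, this becomes the compact form $\E(\xi_\bX) = \tfrac{(n-1)(2n-3p-3)}{(n-p-1)(n-p-2)}$.

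Finally, dividing numerator and denominator by $n^2$ and sending $n,p\to\infty$ with $\tfrac{p}{n}\to\gamma$ produces $\E(\xi_\bX)\to\tfrac{2-3\gamma}{(1-\gamma)^2}$, as asserted; no interchange of limit and expectation is required, since after taking expectations we are left with a deterministic rational function whose limit is elementary. The point that deserves care — more a subtlety than an obstacle — is the exact cancellation underlying this simplification: each of the three terms in $\E(\xi_\bX)$ grows linearly in $n$, and one checks that the coefficients of their $O(n)$ parts sum to zero (indeed $\tfrac{\gamma(2-\gamma)}{1-\gamma} + (1-\gamma) - \tfrac{1}{1-\gamma} = 0$), so only the $O(1)$ remainder survives. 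For this reason I would perform the algebraic combination exactly, as above, rather than through a leading-order expansion that would discard precisely the finite term determining the answer.
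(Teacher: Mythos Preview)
Your proposal is correct and follows essentially the same approach as the paper: compute $\E(\xi_\bX)$ exactly by combining Lemma~\ref{lemma: exp_inv_leverage} with \eqref{eq: dfR_normal_expectation}, then take the limit. Your factored form $\tfrac{(n-1)(2n-3p-3)}{(n-p-1)(n-p-2)}$ is algebraically identical to the paper's $\tfrac{2n^2-3np-5n+3p+3}{(n-p-2)(n-p-1)}$, and your closing remark about the cancellation of the $O(n)$ contributions is a nice touch that the paper does not make explicit.
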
 
\begin{proof}
	Lemma \ref{lemma: exp_inv_leverage} implies that
	$$
		\E[\trace(\bA)] = \frac{(n-1)(n-3)}{n-p-2} + p - n.
	$$
	By \eqref{eq: dfR_normal_expectation}, we have
	$$
		\E(\dfR) = \frac{np}{2(n-p-1)} + \frac{p}{2}.
	$$
	Thus, as $n$ and $p\to\infty$ and $\frac{p}{n}\to\gamma < 1$,
	$$
		\E(\xi_\bX) = 2\E(\dfR) - \E[\trace(\bA)] = \frac{2n^2-3np-5n+3p+3}{(n-p-2)(n-p-1)} \to \frac{2-3\gamma}{(1-\gamma)^2}.
	$$
\end{proof}

Theorem \ref{thm: expected_adj_under} suggests that the adjustment $\frac{1}{n} \sigma_\varepsilon^2 \xi_\bX$ is negligible on average as long as $n$ is large and $p$ is not too close to $n$. But since $\E(\xi_\bX) \to -\infty$ as $\gamma \to 1$, $\widehat{\ErrR}$ could be negative, especially when $p$ gets closer to $n$. 

In terms of the variability of the adjustment, Lemma \ref{lemma: exp_inv_leverage} implies that
$$
	n\var\left(\frac{1}{1-h_{ii}}\right) \to \frac{2 \gamma}{(1 - \gamma)^3}
$$
as $n$ and $p\to\infty$ and $\frac{p}{n}\to\gamma < 1$. As a consequence, the variance of $\trace(\bA)$ and $\xi_\bX$ can be very large near the interpolation threshold, which may further increase the chance of $\widehat{\ErrR}$ being negative. In Section \ref{subsubsec: improved_estimators}, we will further discuss this issue and develop corrections for $\hat{\delta}$ and $\widehat{\ErrR}$.

\subsubsection{Overparameterized Regime} \label{subsubsec: risk_est_over}

When $p > n$, we consider the minimum-norm least squares method. To get the explicit form of the matrix $\bA$, we study the ridge regression problem as a proxy, since the minimum-norm least squares estimator is the limit of the solution to the ridge regression problem \eqref{eq: ridge_regression} as $\lambda\to 0$ \citep{hastie2019surprises}. For design matrix $\bX_\cS$ and regularization parameter $\lambda$, the hat matrix $\bH_\lambda = (h_{\lambda, ij})$ of the ridge regression model is given by
$$
	\bH_\lambda = \bX_\cS \bX_\cS^\T (\bX_\cS \bX_\cS^\T + \lambda \bI_n)^{-1} = \bI_n - \lambda (\bX_\cS \bX_\cS^\T + \lambda \bI_n)^{-1}.
$$
Then, we have $h_{\lambda, ii} = 1 - \lambda [(\bX_\cS \bX_\cS^\T + \lambda \bI)^{-1}]_{ii}$ and
$$
	\begin{aligned}
		\bA 
		& = \lim_{\lambda \to 0} (\bI_n - \bH_\lambda^\T) \mathrm{diag}\left(\frac{h_{\lambda, ii}(2-h_{\lambda, ii})}{(1-h_{\lambda, ii})^2}\right) (\bI_n - \bH_\lambda) \\
		& = \lim_{\lambda \to 0} (\bX_\cS \bX_\cS^\T + \lambda \bI_n)^{-1} \mathrm{diag}\left(\frac{1 - \lambda^2 [(\bX_\cS \bX_\cS^\T + \lambda \bI_n)^{-1}]_{ii}^2}{[(\bX_\cS \bX_\cS^\T + \lambda \bI_n)^{-1}]_{ii}^2}\right) (\bX_\cS \bX_\cS^\T + \lambda \bI_n)^{-1}\\
		& = (\bX_\cS \bX_\cS^\T)^{-1} \mathrm{diag}\left(\frac{1}{[(\bX_\cS \bX_\cS^\T)^{-1}]_{ii}^2}\right)(\bX_\cS \bX_\cS^\T)^{-1}.
	\end{aligned}
$$
The following theorem provides some insight into the expectation of $\xi_\bX$ when $\bx_i$'s follow an isotropic multivariate normal distribution.
\begin{theorem} \label{thm: expected_adj_over}
	Assume $p > n+1$. Let $\bx_1,\ldots,\bx_n\in\mathbb{R}^p$ be i.i.d. from $\mathcal{N}(\mathbf{0},\bI_p)$, and assume that $\bX = (\bx_1,\ldots,\bx_n)^\T$ has full row rank. Define 
	$$
		\bA = (\bX \bX^\T)^{-1} \mathrm{diag}\left(\frac{1}{[(\bX \bX^\T)^{-1}]_{ii}^2}\right)(\bX \bX^\T)^{-1}.
	$$
	Then, for the minimum-norm least squares method with design matrix $\bX$,
	$$
		\E(\xi_\bX) \to \frac{1}{\gamma - 1}
	$$
	as $n$ and $p\to\infty$ and $\frac{p}{n}\to\gamma>1$.
\end{theorem}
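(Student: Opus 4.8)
\emph{Proof idea.} The plan is to exploit the exact identity $\E(\xi_\bX) = 2\,\E(\dfR) - \E[\trace(\bA)]$, compute each expectation in closed form under the isotropic Gaussian model, and only then pass to the limit $p/n\to\gamma$. Because the rows of $\bX$ are i.i.d.\ $\mathcal{N}(\mathbf{0},\bI_p)$, the Gram matrix $\bX\bX^\T$ is Wishart and $(\bX\bX^\T)^{-1}$ is inverse-Wishart with $p$ degrees of freedom and scale $\bI_n$. The first expectation is already available: in the isotropic case $\dfR = \tfrac{n}{2} + \tfrac{n}{2}\trace[(\bX\bX^\T)^{-1}]$, so Remark~\ref{rmk: df_over_normal} gives $\E(\dfR) = \tfrac{n(p-1)}{2(p-n-1)}$. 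The whole problem thus reduces to evaluating $\E[\trace(\bA)]$.

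The core work is $\E[\trace(\bA)]$. Cycling the trace in $\bA = (\bX\bX^\T)^{-1}\diag\big(1/[(\bX\bX^\T)^{-1}]_{ii}^2\big)(\bX\bX^\T)^{-1}$ gives
\[
	\trace(\bA) = \sum_{i=1}^n \frac{[(\bX\bX^\T)^{-2}]_{ii}}{[(\bX\bX^\T)^{-1}]_{ii}^2}.
\]
By exchangeability of the rows of $\bX$ all summands have the same mean, so $\E[\trace(\bA)] = n\,\E\big([(\bX\bX^\T)^{-2}]_{11}/[(\bX\bX^\T)^{-1}]_{11}^2\big)$. Writing $\bX_{-1}$ for $\bX$ with its first row deleted and applying the block-inverse (Schur complement) formula to the leading partition of $\bX\bX^\T$ yields the key identity
\[
	\frac{[(\bX\bX^\T)^{-2}]_{11}}{[(\bX\bX^\T)^{-1}]_{11}^2} = 1 + \bx_1^\T\bX_{-1}^\T(\bX_{-1}\bX_{-1}^\T)^{-2}\bX_{-1}\bx_1,
\]
since the first row of $(\bX\bX^\T)^{-1}$ equals $s^{-1}\big(1,\, -\bx_1^\T\bX_{-1}^\T(\bX_{-1}\bX_{-1}^\T)^{-1}\big)$ with $s = [(\bX\bX^\T)^{-1}]_{11}^{-1}$, and the factor $s^2 = 1/[(\bX\bX^\T)^{-1}]_{11}^2$ cancels the common $s^{-2}$.

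It remains to take the expectation of this quadratic form. Conditioning on $\bX_{-1}$ and using that $\bx_1\sim\mathcal{N}(\mathbf{0},\bI_p)$ is independent of $\bX_{-1}$, the vector $\bX_{-1}\bx_1$ is conditionally $\mathcal{N}(\mathbf{0},\bX_{-1}\bX_{-1}^\T)$, whence
\[
	\E\big[\bx_1^\T\bX_{-1}^\T(\bX_{-1}\bX_{-1}^\T)^{-2}\bX_{-1}\bx_1 \,\big|\, \bX_{-1}\big] = \trace[(\bX_{-1}\bX_{-1}^\T)^{-1}].
\]
Since $\bX_{-1}$ is $(n-1)\times p$ with i.i.d.\ standard normal rows, $(\bX_{-1}\bX_{-1}^\T)^{-1}$ is inverse-Wishart of dimension $n-1$ with $p$ degrees of freedom, and the trace formula used in Remark~\ref{rmk: df_over_normal} gives $\E[\trace((\bX_{-1}\bX_{-1}^\T)^{-1})] = (n-1)/(p-n)$. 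Hence $\E[\trace(\bA)] = n\big(1+\tfrac{n-1}{p-n}\big) = \tfrac{n(p-1)}{p-n}$.

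Combining the two expectations produces the exact finite-sample expression
\[
	\E(\xi_\bX) = \frac{n(p-1)}{p-n-1} - \frac{n(p-1)}{p-n} = \frac{n(p-1)}{(p-n-1)(p-n)},
\]
and the result follows by letting $p/n\to\gamma$. I expect this final limit, rather than the algebra, to be the real obstacle: both $2\,\E(\dfR)$ and $\E[\trace(\bA)]$ grow linearly in $n$ with the \emph{identical} leading coefficient $\gamma/(\gamma-1)$, so their $O(n)$ parts cancel and the limiting value of $\E(\xi_\bX)$ is entirely a second-order effect. One must therefore retain the exact rational functions of $n$ and $p$ and expand them to order $O(1)$ in $1/n$ (for instance by polynomial division with $p=\gamma n$) rather than substituting limiting values term by term; carrying out this expansion produces a finite limiting constant depending only on $\gamma$, as claimed.
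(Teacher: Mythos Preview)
Your proof is correct and follows essentially the same route as the paper's: both reduce $\E[\trace(\bA)]$ to a single term by exchangeability, identify that term with a quadratic form whose conditional mean is $\trace[(\bX_{-1}\bX_{-1}^\T)^{-1}]$, and then use the inverse-Wishart trace formula to obtain the identical exact expression $\E(\xi_\bX)=\dfrac{n(p-1)}{(p-n-1)(p-n)}$. The only cosmetic difference is that the paper partitions $\bV=(\bX\bX^\T)^{-1}$ and invokes the inverse-Wishart conditional distributions of \cite{bodnar2008properties}, whereas you obtain the same identity more directly by applying the block-inverse formula to $\bX\bX^\T$ and using independence of $\bx_1$ from $\bX_{-1}$; the two computations are algebraically equivalent, and your closing remarks about $O(n)$ cancellation are unnecessary once the exact rational expression is in hand.
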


According to the theorem above, the adjustment $\frac{1}{n}\sigma_\varepsilon^2 \xi_\bX$ is nontrivial when $p$ is close to $n$, which is similar to the underparameterized regime. Also, we recognize that the expectation of $\xi_\bX$ is positive as $\frac{p}{n}$ approaches $\gamma$. Note that, when $\bx_i \sim \mathcal{N}(\mathbf{0},\bI_p)$, $(\bX\bX^\T)^{-1}$ follows an inverse-Wishart distribution with degrees of freedom $p$ and scale matrix $\bI_n$. Using the results about the variance of an inverse-Wishart matrix presented in \cite{press2005applied}, we can show that
$$
	n \var([(\bX\bX^\T)^{-1}]_{ij}) = \frac{n^3}{(p-n)(p-n-1)(p-n-3)} \to \frac{1}{(\gamma-1)^3}
$$
as $n$ and $p\to\infty$ and $\frac{p}{n}\to\gamma>1$. As a result, $\trace(\bA)$, which depends heavily on $(\bX\bX^\T)^{-1}$, has a large variance near the interpolation threshold. Even though $\E(\xi_\bX) > 0$, it is still possible that a large negative $\xi_\bX$ makes $\hat{\delta}$ and $\widehat{\ErrR}$ negative as well. Therefore, $\hat{\delta}$ and $\widehat{\ErrR}$ need to be corrected for being negative in the overparameterized regime as well.

\subsubsection{Corrections for \texorpdfstring{$\hat{\delta}$}{\textdelta\textasciicircum} and \texorpdfstring{$\widehat{\ErrR}$}{ErrR\textasciicircum}} \label{subsubsec: improved_estimators}

As discussed in Sections \ref{subsubsec: risk_est_under} and \ref{subsubsec: risk_est_over}, $\widehat{\ErrR}$ may be negative around the interpolation threshold. This is due to the large variance of $\trace(\bA)$ that could cause $\hat{\delta} = \frac{1}{n}\by^\T \bA \by - \frac{1}{n} \sigma_\varepsilon^2 \trace(\bA)$ to be very negative. To address this issue, we consider the following family of estimators indexed by $a$ and $b$:
$$
	\hat{\delta}_{a,b} = \frac{a}{n} \by^\T \bA \by - \frac{b}{n} \sigma_\varepsilon^2 \trace(\bA), \quad a \in \mathbb{R}^+, b \in \mathbb{R}.
$$
Our goal is to estimate the excess bias $\Delta B_X$ with $\hat{\delta}_{a,b}$ for some $a$ and $b$ whenever $\hat{\delta} < 0$. Thus, we require
\begin{equation}\label{eq: risk_est_correction_req1}
	\frac{b}{a} \leq \hat{t} \coloneqq \frac{\by^\T \bA \by}{\sigma_\varepsilon^2\trace(\bA)}
\end{equation} 
so that $\hat{\delta}_{a,b} \geq 0$ for all $p$. 

As an alternative estimator of $\Delta B_\bX$, we also want the mean squared error of $\hat{\delta}_{a,b}$ to be as small as possible. Note that \eqref{eq: delta_approx} implies that
$$
	\E(\hat{\delta}) = \frac{1}{n} \bmu^\T \bA \bmu \approx \Delta B_\bX,
$$
Then, we can choose $(a,b)$ to minimize
$$
	\begin{aligned}
		R(a,b) 
		& \coloneqq \E[(n\hat{\delta}_{a,b} - \bmu^\T \bA \bmu)^2 \vert \bX] \\
		& = [(a-1) \bmu^\T \bA \bmu + (a - b) \sigma_\varepsilon^2 \trace(\bA)]^2 + a^2 \var(\by^\T \bA \by\vert X).
	\end{aligned}
$$
Solving $\frac{\partial R}{\partial b} = 0$ gives $a = \frac{b + t}{1 + t}$, where $t  = \frac{\bmu^\T \bA \bmu}{\sigma_\varepsilon^2 \trace(\bA)}$. Since $\bmu$ is unknown, we can use $\hat{t}$ in place of $t$. This leads to the identity
\begin{equation}\label{eq: risk_est_correction_req2}
	a = \frac{b + \hat{t}}{1 + \hat{t}}.
\end{equation} 
Combining the identity with \eqref{eq: risk_est_correction_req1}, we get a set of requirements for $(a,b)$:
$$
	b \leq \hat{t}^2 \; \text{and}\; a = \frac{b + \hat{t}}{1 + \hat{t}}.
$$
We consider two choices of $(a,b)$ below.
\begin{enumerate}
	\item[i.] If we take $b = \hat{t}^2$ and $a = \hat{t}$, then $\hat{\delta}_{a,b} = 0 = \hat{\delta}_{0,0}$. This leads to the estimator
	$$
		\hat{\delta}_+ = \max(\hat{\delta}, 0).
	$$
	
	\item[ii.] Note that the issue with $\hat{\delta}$ appears around the interpolation threshold, where $\trace(\bA)$ is large enough to make $t$ close to 0. Hence, another choice of $(a,b)$ is $b = 0$ and $a = \frac{\hat{t}}{1 + \hat{t}}$. The resulting estimator is
	$$
		\hat{\delta}_{++} = 
		\begin{dcases}
			\hat{\delta}, & \hat{\delta} \geq 0,\\
			\frac{(\by^\T \bA \by)^2}{n[\by^\T \bA \by + \sigma_\varepsilon^2 \trace(\bA)]}, & \hat{\delta} < 0.
		\end{dcases}
	$$
\end{enumerate}
With either correction above, the estimate of the excess bias $\Delta B_\bX$ is guaranteed to be nonnegative. Based on $\hat{\delta}_+$ and $\hat{\delta}_{++}$, we can define $\widehat{\ErrR}_+$ and $\widehat{\ErrR}_{++}$ correspondingly. We will compare these two estimators along with the original estimator $\widehat{\ErrR}$ and the LOOCV error $\widehat{\ErrR}_{\rm loocv}$ in Section \ref{sec: numerical_studies}.

\section{Numerical Studies on Subset Regression} \label{sec: numerical_studies}

We evaluate the performance of the estimators developed in Section \ref{sec: pred_err_est} through simulations and a real data analysis. 

\subsection{Simulations} \label{sec: simulations}
Throughout our experiments, we set $n = 50$ and $d = 120$, and assume $\bx_i \sim \mathcal{N}(\mathbf{0}, \bI_d)$ and $\varepsilon_i \sim \mathcal{N}(0, 1)$. We consider the following two mean functions:
\begin{itemize}
	\item Linear: $\mu(\bx; \bbeta) = \sum_{j=1}^d \beta_j x_j$
	\item Nonlinear: $\mu(\bx; \bbeta) = \sum_{j=1}^d \beta_j \left(e^{x_j/2} - e^{1/8}\right)$
\end{itemize}
Since we assume all features are i.i.d., the magnitude of the coefficients reflect the importance of the corresponding features to the response. We set $\beta_j = \alpha \left(1 - j/d\right)^\kappa$ ($\kappa \geq 1)$ and choose $\alpha$ such that $\Vert \bbeta \Vert^2 = 10$. To see the impact of model sparsity on the performance of the estimators, we examine two cases: $\kappa = 1$ (dense) and $\kappa = 5$ (sparse). In subset regression, we add variables presciently, i.e., from the most important to the least.

We first look at the estimators of the excess bias $\Delta B_\bX$. As shown in Figure \ref{fig: delta_est_compr_sparsity}, the estimator $\hat{\delta}$ (or $\widehat{B^+}$ in \cite{rosset2020fixed}) can indeed be negative when $p$ is close to $n$ regardless of the form and sparsity of the true model. This suggests the necessity of an appropriate correction for $\hat{\delta}$. On average, the two corrections proposed in Section \ref{subsubsec: improved_estimators} work equally well under the dense model but tend to overestimate $\Delta B_\bX$ for $p$ around $n$ when the true model is sparse. Such a difference is more prominent in the linear case. In general, $\hat{\delta}_+$ is less biased than $\hat{\delta}_{++}$, but the difference is fairly small. In the rest of the study, we will only consider $\hat{\delta}_+$ and $\widehat{\ErrR}_+$.
\begin{figure}[!b]
    \centering
	\includegraphics[scale = 0.6]{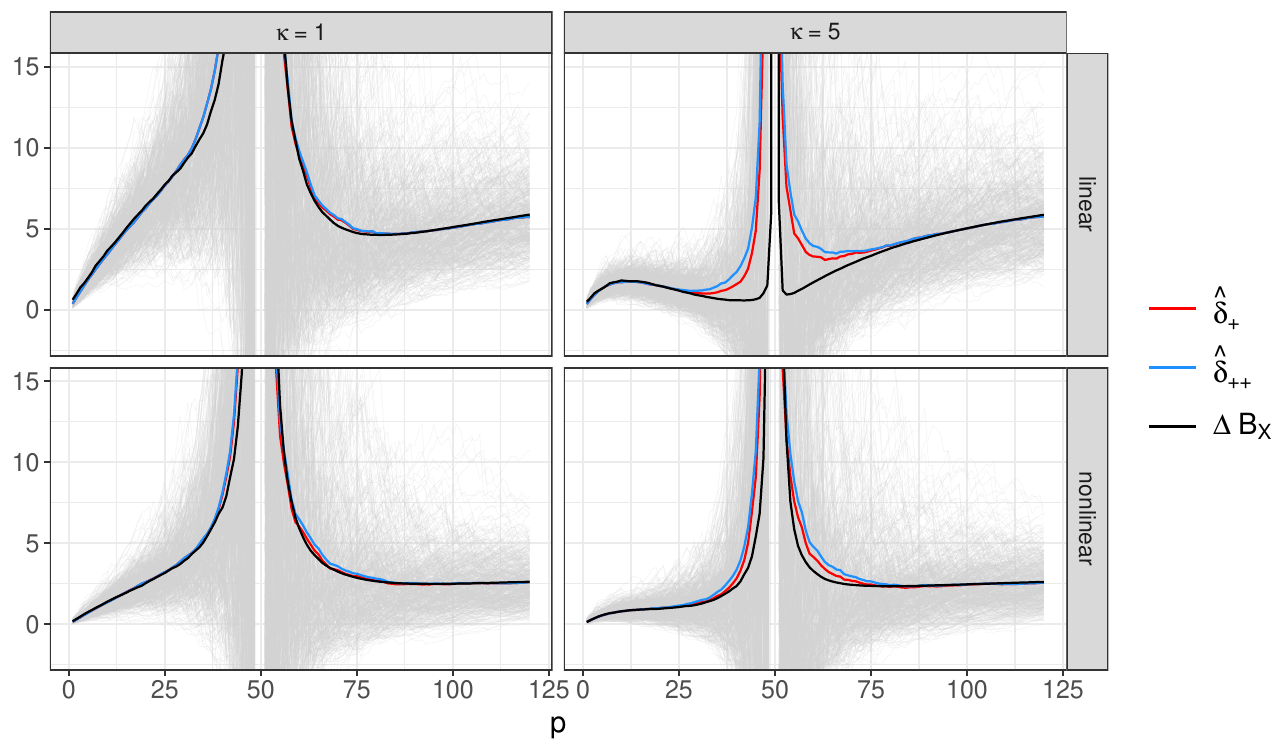}
	\caption{Comparison of $\hat{\delta}$, $\hat{\delta}_+$ and $\hat{\delta}_{++}$ as an estimator of the excess bias $\Delta B_\bX$. The gray lines show $\hat{\delta}$ for 500 random replicates of $(\bX, \by)$. $\hat{\delta}_+$, $\hat{\delta}_{++}$ and the true $\Delta B_\bX$ are averaged over these replicates.} \label{fig: delta_est_compr_sparsity}
\end{figure}

\begin{figure}[!t]
	\centering
	\includegraphics[scale = 0.62]{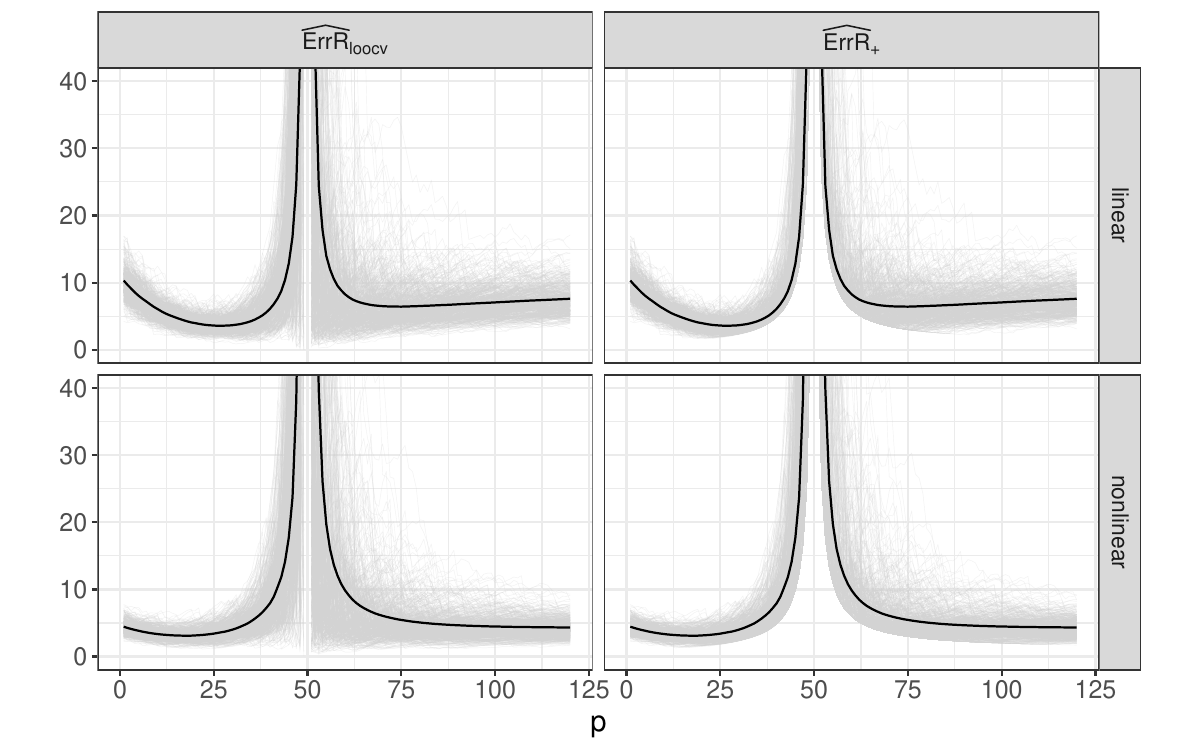}
	\caption{Comparison of $\widehat{\ErrR}_{\rm loocv}$ and $\widehat{\ErrR}_+$. The gray lines are the estimates for 500 random replicates of $(\bX,\by)$. The black line is the average true prediction error.}\label{fig: err_est_compr}
\end{figure}

For the prediction error estimators, we take $\kappa = 5$ as an example. Figure \ref{fig: err_est_compr} compares $\widehat{\ErrR}_{\rm loocv}$ and $\widehat{\ErrR}_+$ over 500 random realizations of $(\bX, \by)$. We see that the two estimators are very close when the model is away from the interpolation threshold. When $p$ is close to $n$, however, $\widehat{\ErrR}_+$ has much smaller variance than $\widehat{\ErrR}_{\rm loocv}$, despite having a slight upward bias that comes from $\hat{\delta}_+$.

To compare the performance of $\widehat{\ErrR}_{\rm loocv}$ and $\widehat{\ErrR}_+$ in estimating the risk quantitatively,  let $\ErrR^{(m)}$, $\widehat{\ErrR}_{\rm loocv}^{(m)}$ and $\widehat{\ErrR}_+^{(m)}$ be the true out-of-sample prediction error and the two estimates based on the $m$th replicate $(\bX^{(m)}, \by^{(m)})$. Define the relative mean squared error of $\widehat{\ErrR}_+$ to $\widehat{\ErrR}_{\rm loocv}$ as
$$
	\Pi(p) = \frac{\sum_{m} [\widehat{\ErrR}_+^{(m)}(p) - \ErrR^{(m)}(p)]^2}{\sum_{m} [\widehat{\ErrR}_{\rm loocv}^{(m)}(p) - \ErrR^{(m)}(p)]^2}, \; \text{for } p \neq n.
$$
As shown in Figure \ref{fig: err_est_compr_rel_mse}, $\widehat{\ErrR}_+$ can have much lower mean squared error than $\widehat{\ErrR}_{\rm loocv}$ around the interpolation threshold. 
\begin{figure}[!t]
	\centering
	\includegraphics[scale = 0.58]{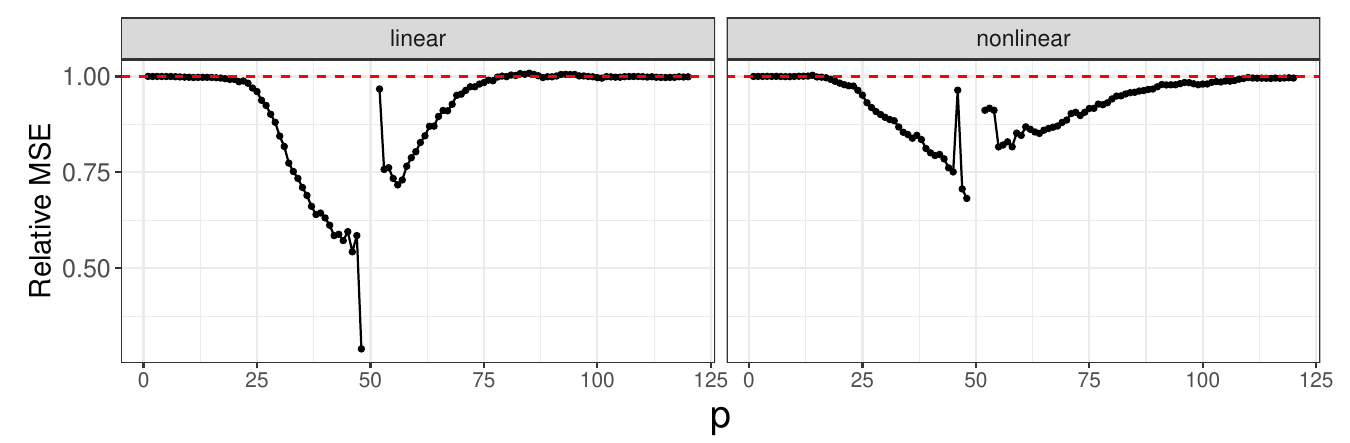}
	\caption{The relative mean squared error of $\widehat{\ErrR}_+$ to $\widehat{\ErrR}_{\rm loocv}$.} \label{fig: err_est_compr_rel_mse}
	\vspace{12pt}
	
	\includegraphics[scale = 0.52]{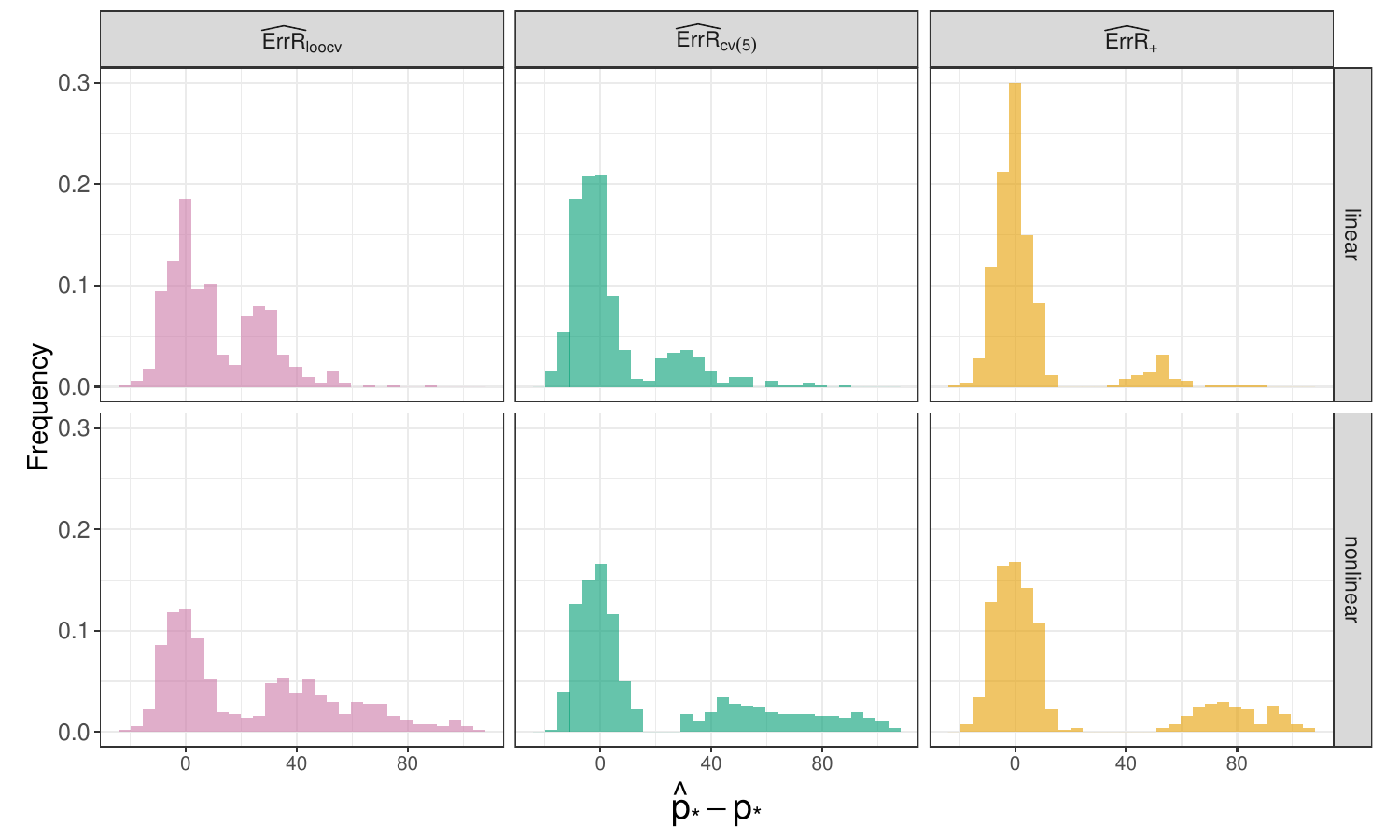}
	\caption{Comparison of $\widehat{\ErrR}_{\rm loocv}$, 5-fold cross validation and $\widehat{\ErrR}_+$ in model selection.}\label{fig: err_est_compr_model_selection}
\end{figure}

In practice, cross validation is often used to select models. Using the same simulated data, we compare the performance of $\widehat{\ErrR}_+$ with the leave-one-out and 5-fold cross validation in model selection. Let $p_\ast^{(m)}$ be the minimizer of the true prediction error $\ErrR^{(m)}$. Let $\hat{p}_+^{(m)}$, $\hat{p}_{\rm loocv}^{(m)}$ and $\hat{p}_{\rm cv(5)}^{(m)}$ be the optimal $p$ identified by $\widehat{\ErrR}_+$, LOOCV and 5-fold cross validation respectively. Figure \ref{fig: err_est_compr_model_selection} shows the histogram of $\hat{p}^{(m)}_{\ast} -  p_\ast^{(m)}$ for the three model selection methods. We can see that $\widehat{\ErrR}_+$ outperforms the other two with more concentration around 0 in the histogram for both the linear and nonlinear cases. We also notice that all histograms exhibit a bimodal pattern with a major mode around 0 followed by a minor one. This second mode is due to some competing interpolating models with comparable risk estimates as the optimal model in the underparameterized regime. In terms of the location of the second mode, $\widehat{\ErrR}_+$ has produced the largest model size. This implies that it tends to select simpler models than the other two methods in the overparameterized regime as illustrated in Figure \ref{fig: reconcile_double_descent}.

\subsection{Real Data Analysis}

We apply our prediction error estimators $\widehat{\ErrR}_+$ to the US cancer mortality data, which is publicly available at \url{https://data.world/nrippner/ols-regression-challenge}. The data were obtained at the county level with the response being the mean \textit{per capita} (100,000) cancer mortalities from 2010 to 2016. Covariates include demographics (population, median age, etc.), socioeconomic status (median income, unemployment rate, etc.), health conditions (coverage, cancer-related clinical trials, etc.) and education levels. Based on exploratory data analysis, 22 continuous independent variables are initially selected for modeling, with missing values imputed by the statewise median and appropriate transformations chosen. To reduce the impact of geographical variations, we focus on states around the Great Lakes area and their neighboring states as shown in Figure \ref{fig: rda_us_map}. 
\begin{figure}[t]
	\centering
	\includegraphics[scale = 0.6]{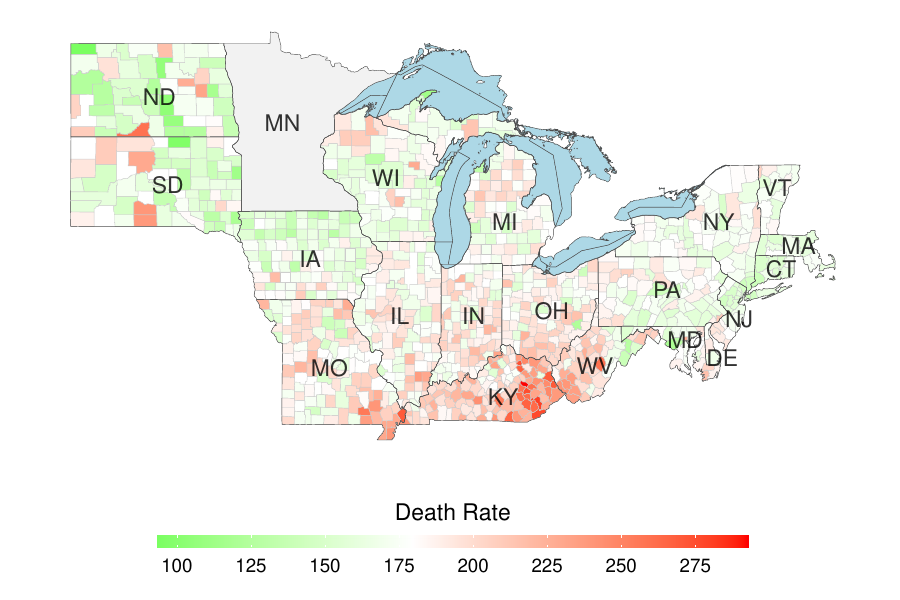}
	\caption{County-level \textit{per capita} (100,000) cancer mortalities of the states around the Great Lakes and their neighboring states.}
	\label{fig: rda_us_map}
\end{figure}
Minnesota is removed from the analysis due to data quality issues. Stratified sampling is used to create two datasets of size 40 and 510 from the total 1,148 counties for training and testing respectively. The remaining 598 counties are used to
\begin{itemize}
	\item Estimate the variable means and covariance matrix $\bSigma$, and center all observations in the training and test data;
	
	\item Estimate the error variance $\sigma_\varepsilon^2$ by fitting the full model with all predictors;
	
	\item Estimate $\dfR$ using $\hat{\bSigma}$ and $\hat{\sigma}_\varepsilon^2$;
	
	\item Starting from the null model, determine the variable sequence for subset regression by adding the variable that reduces the residual sum of squares most until all variables are included.
\end{itemize}

Figure \ref{fig: rda_subset_reg_single} demonstrates the performance of 6 different variable selection methods based on one random data partition. The black curve on the left panel shows the true prediction error over the test data, which is minimized at $p=6$ for this particular partition. We can see that $\widehat{\ErrR}_+$, $\widehat{\ErrR}_{\rm loocv}$, $\widehat{\ErrR}_{\rm cv(5)}$ and $\mathrm{BIC}$ all select the model with the first 7 variables, whereas $\widehat{\ErrF}$ and $\mathrm{AIC}$ both select the one with 19. In addition, $\widehat{\ErrR}_+$ and $\widehat{\ErrR}_{\rm loocv}$ are very close for all $p$ in this example. This is not surprising, since the sample size of the training data $n=150$ is much larger than the total number of predictors $d=22$, which makes the adjustment $\frac{1}{n}\sigma_\varepsilon^2 \xi_\bX$ almost negligible compared to $\widehat{\ErrR}_{\rm loocv}$ in \eqref{eq: ErrR_hat}. For more details about the model selected by $\widehat{\ErrR}_+$, see Appendix \ref{det: rda}.
\begin{figure}[!t]
	\centering
	\includegraphics[scale = 0.62]{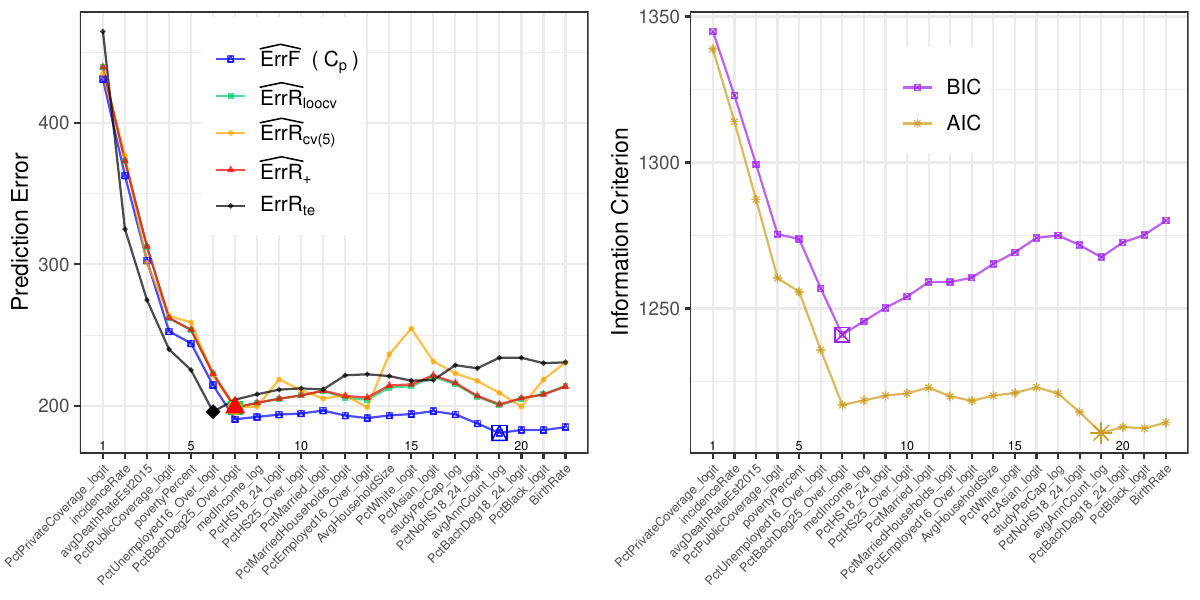}
	\caption{Comparison of various variable selection methods on the cancer mortality data. $\ErrR_{\rm te}$ is the mean prediction error on the test data. The highlighted points mark the minimum for each criterion.}
	\label{fig: rda_subset_reg_single}
\end{figure}
\begin{table}[t!]
	\centering
	\caption{Average performance of different variable selection methods based on 1,000 random data partitions. The sample size of the training data is taken to be 150 and 40 respectively in the two tables. $p_\ast$ and $\hat{p}_\ast$ are the optimal model size indicated by the test data and variable selection criterion respectively. Numbers in parentheses are the standard deviations.}
	\label{tab: rda_subset_reg_avg}
	\begin{tabular}{c|cccccc}
		\multicolumn{7}{c}{$n = 150$}\\
		\hline
		Method & $\widehat{\ErrR}_+$ & $C_p$ & LOOCV & 5-fold CV & AIC & BIC \\ 
		\hline
		$\hat{p}_\ast/p_\ast$ & \makecell{$1.109$\\\small($0.701$)}& \makecell{$1.609$\\\small($0.819$)} & \makecell{$1.136$\\\small($0.714$)} & \makecell{$1.211$\\\small($0.688$)} &  \makecell{$1.685$\\\small($0.837$)} & \makecell{$0.701$\\\small($0.329$)}  \\
		
		$\ErrR_{\rm te}(\hat{p}_\ast)/\ErrR_{\rm te}(p_\ast)$ & \makecell{$1.075$\\\small($0.077$)} & \makecell{$1.108$\\\small($0.110$)} & \makecell{$1.078$\\\small($0.086$)} & \makecell{$1.075$\\\small($0.092$)} & \makecell{$1.110$\\\small($0.111$)} & \makecell{$1.045$\\\small($0.036$)}  \\
		\hline
		\multicolumn{7}{c}{}\\
		\multicolumn{7}{c}{$n = 40$}\\
		\hline
		Method & $\widehat{\ErrR}_+$ & $C_p$ & LOOCV & 5-fold CV & AIC & BIC \\ 
		\hline
		$\hat{p}_\ast/p_\ast$ & \makecell{$0.910$\\\small($0.475$)}& \makecell{$1.303$\\\small($0.754$)} & \makecell{$1.144$\\\small($0.751$)} & \makecell{$1.140$\\\small($0.664$)} &  \makecell{$2.267$\\\small($1.064$)} & \makecell{$0.981$\\\small($0.675$)}  \\
		
		$\ErrR_{\rm te}(\hat{p}_\ast)/\ErrR_{\rm te}(p_\ast)$ & \makecell{$1.172$\\\small($0.206$)} & \makecell{$1.473$\\\small($1.079$)} & \makecell{$1.351$\\\small($0.704$)} & \makecell{$1.290$\\\small($0.651$)} & \makecell{$2.063$\\\small($1.309$)} & \makecell{$1.301$\\\small($0.682$)}  \\
		\hline
	\end{tabular}
\end{table}

With the dataset fixed for estimation of $\bSigma$, $\sigma_\varepsilon^2$, and variable sequence, the average performance of these methods is also studied based on 1,000 partitions of the data into training and test data. Here, in addition to the scenario of $n=150$, we also consider the case of $n=40$. Table \ref{tab: rda_subset_reg_avg} compares the methods in terms of the ratio of the chosen model size to the optimal size determined by test data and the corresponding ratio of the prediction error on the test data. When $n=150$, we see that $\widehat{\ErrR}_+$ and $\widehat{\ErrR}_{\rm loocv}$ have very similar performance, though the former selects slightly fewer variables on average and has marginally smaller variance. Among all 6 criteria, $\mathrm{BIC}$ performs the best in this scenario in terms of the parsimony, prediction error and stability of the selected model.

On the other hand, when $n=40$, $\widehat{\ErrR}_+$ is the most favorable, since the selected models include the least number of features on average while having the smallest prediction error over the test data, and it also has the smallest variance among all six variable selection methods.

\section{Linear Interpolating Models} \label{sec: grad_descient}

So far, our analysis of least squares method has provided us with guidance on how to choose a proper subset of variables for regression. In the overparameterized regime, however, there are infinitely many linear interpolants. While the minimum-norm least squares solution has the smallest norm, it may not necessarily be the best model. In this section, we focus on linear interpolating models and propose a procedure to choose a model among them. Throughout the section, we assume $\bX \in \R^{n \times p}$ and $p > n$.

\subsection{Gradient Descent}
For a given set of data, one may use the gradient descent algorithm to obtain a least squares solution. The gradient of $\frac{1}{2}\Vert \by - \bX\bbeta \Vert^2$ at $\bbeta = \mathbf{b}$ is $-\bX^\T (\by - \bX \mathbf{b})$. Given an initial value $\bbeta^{(0)}$ and a fixed step size $\alpha > 0$, the $k$th iterate of the gradient descent algorithm is given by
\begin{equation}\label{eq: gd_iterate}
	\bbeta^{(k)} = \bbeta^{(k-1)} + \alpha \bX^\T (\by - \bX \bbeta^{(k-1)}), \quad k = 1,2,\ldots.
\end{equation}
It is then easy to show that
\begin{equation} \label{eq: gradient_descent}
	\bbeta^{(k)} = \mathbf{E}^k\bbeta^{(0)} + (\bI_p - \mathbf{E}^{k}) \hat{\bbeta},
\end{equation}
where $\mathbf{E} = \bI_p - \alpha \bX^\T \bX$ and $\hat{\bbeta}$ is the minimum-norm least squares solution. The following lemma gives the condition under which the algorithm converges.
\begin{lemma} \label{lemma: gd_convergence}
	Assume $\by \neq \bX\bbeta^{(0)}$. For $p > n$, the gradient descent algorithm \eqref{eq: gradient_descent} converges if and only if 
	$$
		0 < \alpha < \frac{2}{\lambda_{\max}(\bX^\T \bX)},
	$$
	where $\lambda_{\max}(\bX^\T \bX)$ denotes the largest eigenvalue of $\bX^\T \bX$.
\end{lemma}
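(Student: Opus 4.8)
My plan is to read convergence directly off the closed-form iterate \eqref{eq: gradient_descent}. Setting $\mathbf{w} = \bbeta^{(0)} - \hat{\bbeta}$, that identity reads $\bbeta^{(k)} = \hat{\bbeta} + \mathbf{E}^{k}\mathbf{w}$, so $\{\bbeta^{(k)}\}$ converges if and only if $\{\mathbf{E}^{k}\mathbf{w}\}$ does. Since $\mathbf{E} = \bI_p - \alpha \bX^\T \bX$ is symmetric, I would diagonalize it via the spectral decomposition of $\bX^\T \bX$; its eigenvalues are $1 - \alpha \lambda_j$, where $\lambda_1 \ge \cdots \ge \lambda_p \ge 0$ are those of $\bX^\T \bX$. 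With $p > n$ and $\bX$ of full row rank, exactly $n$ of the $\lambda_j$ are positive (coinciding with the eigenvalues of $\bX \bX^\T$) and the other $p-n$ vanish, so $\R^p$ splits orthogonally into the row space and the null space of $\bX$.

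The guiding observation is that $\mathbf{E}$ acts as the identity on the null space of $\bX$ (there $\lambda_j = 0$), so the null-space part of $\mathbf{E}^{k}\mathbf{w}$ is frozen and irrelevant to convergence, while on the row space the iterate contracts by factors $(1-\alpha\lambda_j)^{k}$ attached to the positive eigenvalues. For the ``if'' direction, $0 < \alpha < 2/\lambda_{\max}(\bX^\T \bX)$ forces $0 < \alpha\lambda_j \le \alpha\lambda_{\max}(\bX^\T \bX) < 2$, hence $|1-\alpha\lambda_j| < 1$ and every such factor tends to $0$, giving $\bbeta^{(k)} \to \hat{\bbeta} + \mathbf{P}\bbeta^{(0)}$ with $\mathbf{P}$ the projection onto the null space of $\bX$. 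To streamline the bookkeeping I would instead track the residual: using $\bX\hat{\bbeta} = \by$ (valid under full row rank) and $\bX\mathbf{E} = \mathbf{F}\bX$ with $\mathbf{F} = \bI_n - \alpha \bX \bX^\T$, one obtains $\by - \bX\bbeta^{(k)} = \mathbf{F}^{k}(\by - \bX\bbeta^{(0)})$. Because $\bX$ restricts to an isomorphism from the row space onto the column space, convergence of $\bbeta^{(k)}$ is equivalent to convergence of $\mathbf{F}^{k}(\by - \bX\bbeta^{(0)})$, and $\mathbf{F}$ is the clean $n \times n$ symmetric matrix with eigenvalues $1-\alpha\lambda_j$ over the positive $\lambda_j$.

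For the ``only if'' direction I would argue by contraposition. If $\alpha \ge 2/\lambda_{\max}(\bX^\T \bX)$, the factor on the leading eigenvalue is $1 - \alpha\lambda_{\max}(\bX^\T \bX) \le -1$, so $(1-\alpha\lambda_{\max}(\bX^\T \bX))^{k}$ either diverges (when the inequality is strict) or oscillates as $(-1)^{k}$ at the boundary, and never converges. The hypothesis $\by \neq \bX\bbeta^{(0)}$ is precisely what makes this conclusion bite: it guarantees a nonzero initial residual $\by - \bX\bbeta^{(0)} = -\bX\mathbf{v}$, equivalently a nonzero row-space component $\mathbf{v}$ of $\mathbf{w}$, excluding the degenerate case in which $\bbeta^{(0)}$ already interpolates and the iteration is stationary for every $\alpha$.

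The hard part is making the necessity direction airtight, because a nonzero residual need not excite the leading eigenspace of $\bX^\T \bX$; were the initial error orthogonal to it, some $\alpha \ge 2/\lambda_{\max}(\bX^\T \bX)$ could still converge, so the truly sharp condition is $\alpha\lambda_j < 2$ for every positive $\lambda_j$ present in the initial residual. I would resolve this by noting that in the operative setting --- a fixed (typically null) initialization with $\by$ drawn from a continuous distribution --- the initial residual almost surely has a nonzero component in every eigendirection, making $\lambda_{\max}(\bX^\T \bX)$ the binding threshold and relegating the orthogonality degeneracy to a measure-zero exception. The remaining delicate point is the boundary $\alpha = 2/\lambda_{\max}(\bX^\T \bX)$, where the offending eigenvalue of $\mathbf{F}$ equals $-1$ and the associated term persists as $(-1)^{k}$ without settling; this is exactly what forces the strict inequality in the statement.
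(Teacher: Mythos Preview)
Your approach is essentially the same as the paper's: both diagonalize $\mathbf{E} = \bI_p - \alpha \bX^\T\bX$ via the spectral structure of $\bX^\T\bX$ (the paper uses the SVD $\bX = \mathbf{U}\bm{\Psi}\bV_1^\T$, you use the eigendecomposition directly), observe that the null-space block of $\mathbf{E}^k$ is the identity and therefore harmless, and then read off the condition $|1-\alpha\psi_j^2|<1$ on the row-space block. Your residual reformulation $\by - \bX\bbeta^{(k)} = \mathbf{F}^k(\by - \bX\bbeta^{(0)})$ with $\mathbf{F} = \bI_n - \alpha\bX\bX^\T$ is a clean equivalent bookkeeping device that the paper does not use, but it leads to the same place.

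Where you differ is in scrupulousness about necessity. The paper's proof simply asserts that convergence is equivalent to $-1 < 1-\alpha\psi_j^2 < 1$ for all $j=1,\ldots,n$, implicitly treating convergence of the matrix sequence $\mathbf{E}^k$ rather than of $\mathbf{E}^k\mathbf{w}$ for the particular $\mathbf{w} = \bbeta^{(0)}-\hat{\bbeta}$. You correctly flag that the hypothesis $\by \neq \bX\bbeta^{(0)}$ only ensures the row-space component of $\mathbf{w}$ is nonzero, not that it hits the leading eigendirection, so for exceptional initial residuals a larger $\alpha$ could still converge. Your resolution via genericity (continuous $\by$ makes the degeneracy measure-zero) is reasonable and in fact more honest than the paper, which glosses over this point entirely; the lemma as stated is, strictly speaking, only the generic threshold.
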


The proof of the lemma is given in \ref{pf: gd_convergence}. Assume $\bX$ has singular value decomposition
$$
	\mathbf{U} (\bm{\Psi}, \mathbf{O})
	\begin{pmatrix}
		\bV_1^\T \\
		\bV_2^\T
	\end{pmatrix} = \mathbf{U} \bm{\Psi} \bV_1^\T,\; \text{with } \mathbf{U},\bm{\Psi} \in \R^{n \times n}, \bV_1 \in \R^{p \times n}, \bV_2 \in \R^{p \times (p-n)},
$$
where $\mathbf{U}$ and $\bV = (\bV_1, \bV_2)$ are two orthogonal matrices and $\bm{\Psi} = \mathrm{diag}(\psi_1,\ldots,\psi_n)$ with $\psi_1 \geq \cdots \geq \psi_n > 0$. When the algorithm converges (i.e., $\mathbf{E}^{k} \to \bV_2 \bV_2^\T$), we can write the limit of the gradient descent iterates as
\begin{equation} \label{eq: gd_limit}
	\bbeta^{(\infty)} \coloneqq \lim_{k \to \infty} \bbeta^{(k)} = \hat{\bbeta} + \bV_2 \bV_2^\T \bbeta^{(0)}.
\end{equation}
Since $\bX \bbeta^{(\infty)} = \bX \hat{\bbeta} = \by$, $\bbeta^{(\infty)}$ defines an interpolating model. On the other hand, for any $\tilde{\bbeta}$ that interpolates the training data, setting $\bbeta^{(0)} = \tilde{\bbeta} + \bV_1 \bV_1^\T \mathbf{a}$ for any $\mathbf{a} \in \R^{p}$ yields
$$
	\tilde{\bbeta} = \hat{\bbeta} + \bV_2 \bV_2^\T \bbeta^{(0)}.
$$
Thus, we can obtain all interpolating models by varying $\bbeta^{(0)}$ in \eqref{eq: gd_limit}. Figure \ref{fig: ls_illustration} demonstrates the set of interpolating models when $p = 3$ and $n = 2$. In this case, the set forms a one-dimensional affine space that is parallel to the column space of $\bV_2$, and intersects the column space of $\bV_1$ (or equivalently, the row space of $\bX$) at the minimum-norm least squares solution $\hat{\bbeta}$.
\begin{figure}[t]
	\centering
	\includegraphics[scale = 0.5]{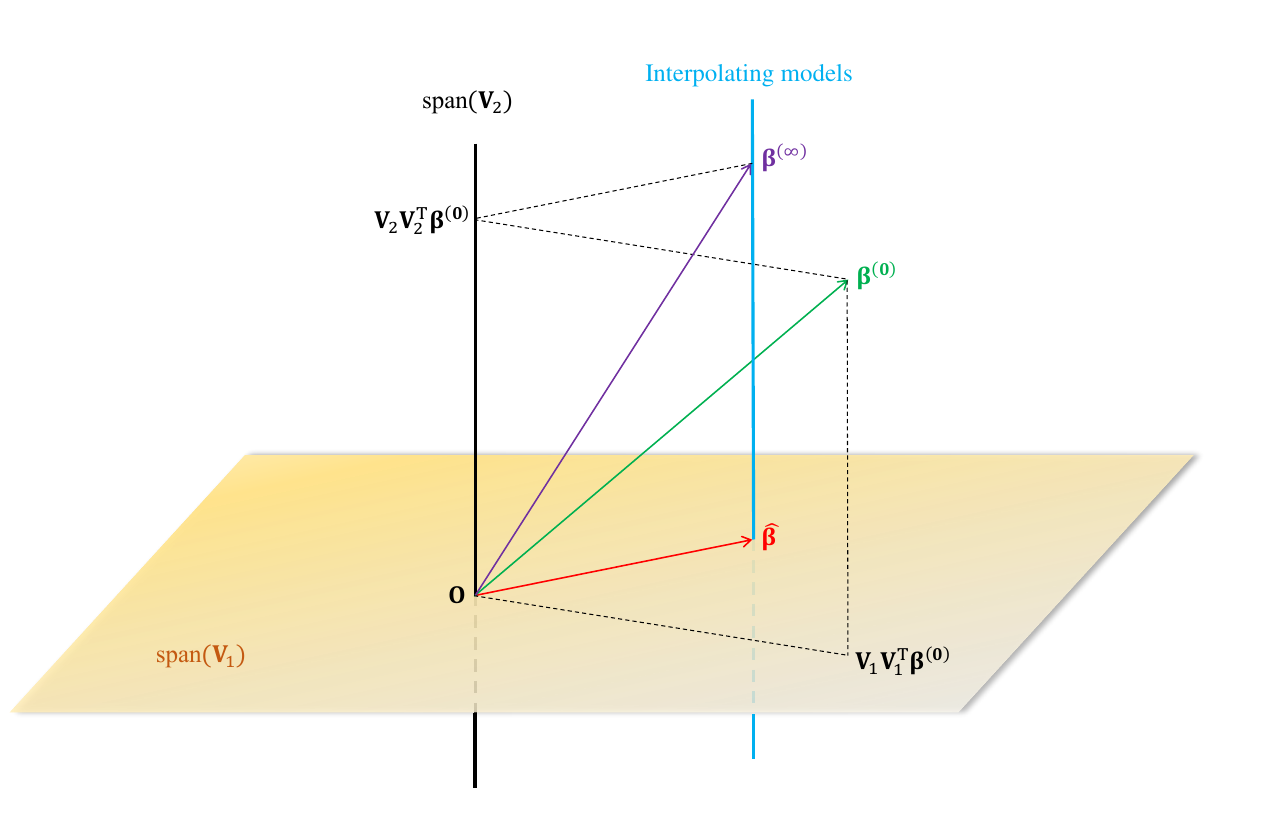}
	\caption{An illustration of the set of interpolating models for $p = 3$ and $n = 2$. The set forms a one-dimensional affine space that is parallel to the column space of $\bV_2$. Its intersection with the column space of $\bV_1$ corresponds to the minimum-norm least squares solution $\hat{\bbeta}$.}
	\label{fig: ls_illustration}
\end{figure}

\subsection{Initialization and Model Selection}
For the gradient descent algorithm \eqref{eq: gd_limit}, we need to specify an initialization scheme to explore different interpolating models. A straightforward way is to initialize $\bbeta^{(0)}$ randomly. While this approach is easy to implement and works well in many applications, it may not be efficient enough to produce a good interpolating model. In this subsection, we consider a data-dependent scheme instead. In particular, we restrict $\bbeta^{(0)}$ to be of the form:
\begin{equation} \label{eq: gd_init}
	\bbeta^{(0)} = \bF \by,
\end{equation}
where $\bF \in \R^{p \times n}$ depends only on $\bX$. One major advantage of this scheme is that the resulting interpolating model $\bbeta^{(\infty)}$ is a linear procedure with hat vector at $\bx_\ast$:
\begin{equation} \label{eq: gd_interpolant_hat_vec}
	\bh_\ast^{(\infty)} = \bh_\ast + \bF^\T \bV_2 \bV_2^\T \bx_\ast,
\end{equation}
where $\bh_\ast = (\bX \bX^\T)^{-1} \bX \bx_\ast$ is the corresponding hat vector of $\hat{\bbeta}$. As a consequence, all results in the previous sections apply to $\bbeta^{(\infty)}$. The following theorem gives the predictive model degrees of freedom of $\bbeta^{(\infty)}$ as well as its relationship with that of $\hat{\bbeta}$.
\begin{theorem} \label{thm: gd_df}
	Let $\bF \in \R^{n \times p}$ be a coefficient matrix that doesn't depend on $\by$, and define $\bbeta^{(0)} = \bF \by$. Let $\bbeta^{(\infty)}$ be the the limit of the gradient descent iterates based on the initial value $\bbeta^{(0)}$. Then, 
	$$
		\dfR(\bbeta^{(\infty)}) =  \frac{n}{2} + \frac{n}{2}\trace\left[(\bF^\T \bV_2 \bV_2^\T + (\bX \bX^\T)^{-1} \bX)^\T (\bF^\T \bV_2 \bV_2^\T + (\bX \bX^\T)^{-1} \bX) \bSigma \right].
	$$
	Further, if $\bSigma = \bI_p$,
	$$
		\dfR(\bbeta^{(\infty)}) \geq \dfR(\hat{\bbeta}),
	$$
	and the equality holds if and only if $\bbeta^{(\infty)} = \hat{\bbeta}$.
\end{theorem}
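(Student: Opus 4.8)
The plan is to treat $\bbeta^{(\infty)}$ as a linear interpolating procedure and apply the simplified formula \eqref{eq: dfR_interpolating_models}. Since $\bX\bbeta^{(\infty)} = \by$, the fitted values coincide with the observations on the training data, so the hat matrix is $\bI_n$ and $\dfR(\bbeta^{(\infty)}) = \frac{n}{2} + \frac{n}{2}\E(\Vert \bh_\ast^{(\infty)}\Vert^2 \vert \bX)$. First I would substitute the hat vector \eqref{eq: gd_interpolant_hat_vec} and collect it as $\bh_\ast^{(\infty)} = \mathbf{M}\bx_\ast$, where $\mathbf{M} = \bF^\T \bV_2 \bV_2^\T + (\bX\bX^\T)^{-1}\bX$.

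The first formula then follows from a trace identity: since $\Vert \bh_\ast^{(\infty)}\Vert^2 = \bx_\ast^\T \mathbf{M}^\T \mathbf{M}\bx_\ast$ and $\E(\bx_\ast\bx_\ast^\T) = \bSigma$ (because $\E(\bx_\ast) = \mathbf{0}$ and $\var(\bx_\ast) = \bSigma$), I would write $\E(\Vert \bh_\ast^{(\infty)}\Vert^2 \vert \bX) = \trace(\mathbf{M}^\T \mathbf{M}\bSigma)$, which is exactly the claimed expression once $\mathbf{M}$ is spelled out.

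For the inequality under $\bSigma = \bI_p$, I would decompose $\mathbf{M} = \mathbf{M}_0 + \mathbf{N}$, where $\mathbf{M}_0 = (\bX\bX^\T)^{-1}\bX$ is the hat map of $\hat{\bbeta}$ and $\mathbf{N} = \bF^\T \bV_2 \bV_2^\T$, and expand $\trace(\mathbf{M}^\T\mathbf{M}) = \trace(\mathbf{M}_0^\T\mathbf{M}_0) + 2\trace(\mathbf{M}_0^\T\mathbf{N}) + \trace(\mathbf{N}^\T\mathbf{N})$. The key step, and the main obstacle, is showing the cross term vanishes. Using the SVD $\bX = \mathbf{U}\bm{\Psi}\bV_1^\T$ one has $\bX\bV_2 = \mathbf{U}\bm{\Psi}\bV_1^\T\bV_2 = \mathbf{0}$, since $\bV_1^\T\bV_2 = \mathbf{0}$ by orthogonality of $\bV = (\bV_1,\bV_2)$; hence $\mathbf{M}_0\bV_2 = (\bX\bX^\T)^{-1}(\bX\bV_2) = \mathbf{0}$ and, by the cyclic property, $\trace(\mathbf{M}_0^\T\mathbf{N}) = \trace\bigl((\mathbf{M}_0\bV_2)^\T\bF^\T\bV_2\bigr) = 0$. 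This leaves $\trace(\mathbf{M}^\T\mathbf{M}) = \trace(\mathbf{M}_0^\T\mathbf{M}_0) + \trace(\mathbf{N}^\T\mathbf{N}) \geq \trace(\mathbf{M}_0^\T\mathbf{M}_0)$, which is precisely $\dfR(\bbeta^{(\infty)}) \geq \dfR(\hat{\bbeta})$ after multiplying by $\frac{n}{2}$ and adding $\frac{n}{2}$ (note that taking $\bF = \mathbf{0}$, or any $\bF$ with $\mathbf{N} = \mathbf{0}$, recovers the hat map of $\hat{\bbeta}$).

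Finally, equality holds iff $\trace(\mathbf{N}^\T\mathbf{N}) = 0$, i.e. $\mathbf{N} = \bF^\T\bV_2\bV_2^\T = \mathbf{0}$. To match the stated condition I would note that $\bbeta^{(\infty)} - \hat{\bbeta} = \bV_2\bV_2^\T\bbeta^{(0)} = \bV_2\bV_2^\T\bF\by = \mathbf{N}^\T\by$ by \eqref{eq: gd_limit}, so $\mathbf{N} = \mathbf{0}$ is equivalent to $\bbeta^{(\infty)} = \hat{\bbeta}$ for every $\by$. The one subtlety to flag is that $\dfR$ does not depend on $\by$ whereas $\bbeta^{(\infty)}$ does, so the equality characterization is cleanest read as an identity in $\by$, equivalently as equality of the two linear procedures, which is exactly what $\mathbf{N} = \mathbf{0}$ delivers.
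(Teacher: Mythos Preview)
Your proposal is correct and follows essentially the same route as the paper's proof: both invoke the interpolating-model formula \eqref{eq: dfR_interpolating_models}, compute $\E(\Vert \bh_\ast^{(\infty)}\Vert^2 \vert \bX)$ as a trace via \eqref{eq: gd_interpolant_hat_vec}, and then expand the quadratic and kill the cross term using the orthogonality $\bV_1^\T\bV_2 = \mathbf{O}$ (the paper rewrites $(\bX\bX^\T)^{-1}\bX = \bV_1\bm{\Psi}^{-1}\mathbf{U}^\T$ and cycles, whereas you use the equivalent fact $\bX\bV_2 = \mathbf{0}$ directly). Your treatment of the equality case, including the observation that the condition should be read as an identity in $\by$ (equivalently $\mathrm{span}(\bF)\subseteq\mathrm{span}(\bV_1)$), matches the paper as well.
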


The proof can be found in \ref{pf: gd_df}. Below we propose an initialization procedure that constructs $\mathbf{F}$ using the simple linear regression coefficients of $\by$ on each column of $\bX$.
\begin{table}[H]
	\centering
	\begin{tabular}{l}
		\Xhline{2.5\arrayrulewidth}
		A data-dependent initialization scheme\\
		\hline
		Step 1: Randomly select $q$ ($q \leq n$) variables. Denote the variable set by $\cS$.\\
		Step 2: Set\\
		\multicolumn{1}{c}{
			$\displaystyle
			\beta_j^{(0)} = \theta_j \frac{\bx_{(j)}^\T \by}{\bx_{(j)}^\T \bx_{(j)}}\mathbf{1}_{\lbrace j \in \cS\rbrace}, \quad j = 1,\ldots,p.
			$}\\
		\hline
	\end{tabular}
\end{table}

We can interpret the above scheme as follows. When $q = 0$ or $\theta_j = 0$ for all $j\in\cS$, $\bbeta^{(0)} = \mathbf{0}$ and the gradient descent ends up with the minimum-norm least squares solution. When $\theta_j = 1$, $\beta_j^{(0)}$ is the simple linear regression coefficient of $\by$ on $\bx_{(j)}$ (without intercept). Thus, choosing a $\theta_j \in (0, 1)$ shrinks that coefficient toward 0. 

We consider a numerical experiment with $n = 20$ and $p = 60$ to evaluate the interpolating models obtained from this initialization scheme. We generate $\bx_i \sim \mathcal{N}(\mathbf{0},\bI_p)$ and $\varepsilon_i \sim \mathcal{N}(0,1)$ for 500 replicates. We assume $\mu(\bx;\bbeta) = \bx^\T \bbeta$ with $\beta_j \propto (1-j/p)^5$ and $\Vert \bbeta \Vert^2 = 10$. Figure \ref{fig: gd_df_single} demonstrates the average predictive model degrees of freedom, excess bias and prediction error of $\bbeta^{(\infty)}$ as a function of $\theta$ when a single variable is selected for initialization ($q = 1$). We see that the importance of the initial variable has little impact on $\dfR$, but can make a big difference in the excess bias and prediction error in two different ways. First, with the minimum-norm least squares model as the reference, using an important variable in the initial value helps reduce the excess bias and risk of the resulting interpolating model, while choosing an unimportant one degrades the performance. Second, the optimal shrinkage parameter $\theta$ generally decreases with the variable importance. So using larger values for important features and smaller values for less relevant ones is recommended.
\begin{figure}[!t]
	\centering
	\includegraphics[scale = 0.55]{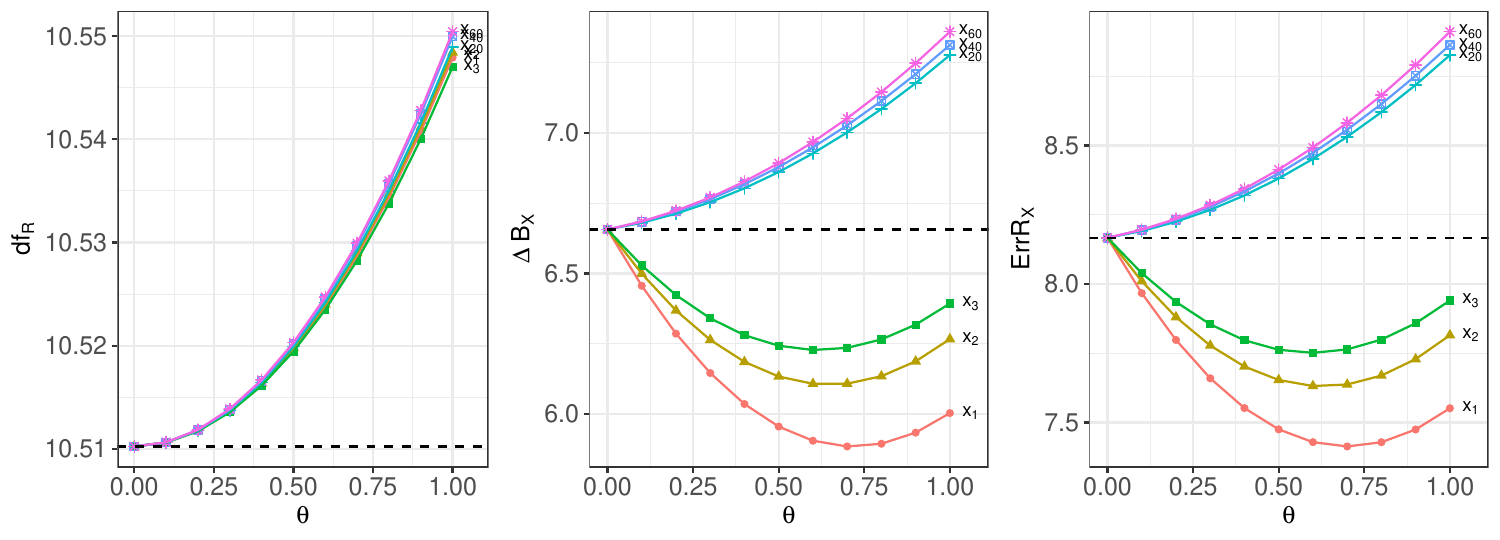}
	\caption{Predictive model degrees of freedom $\dfR$, excess bias $\Delta B_\bX$ and prediction error $\ErrR_\bX$ of $\bbeta^{(\infty)}$ as a function of the shrinkage parameter $\theta$ when a single variable is selected for initialization ($q = 1$). All results are averaged over 500 replicates. The horizontal dashed lines represent the corresponding values of the minimum-norm least squares model.}
	\label{fig: gd_df_single}
	\vspace{10pt}
	\includegraphics[scale = 0.55]{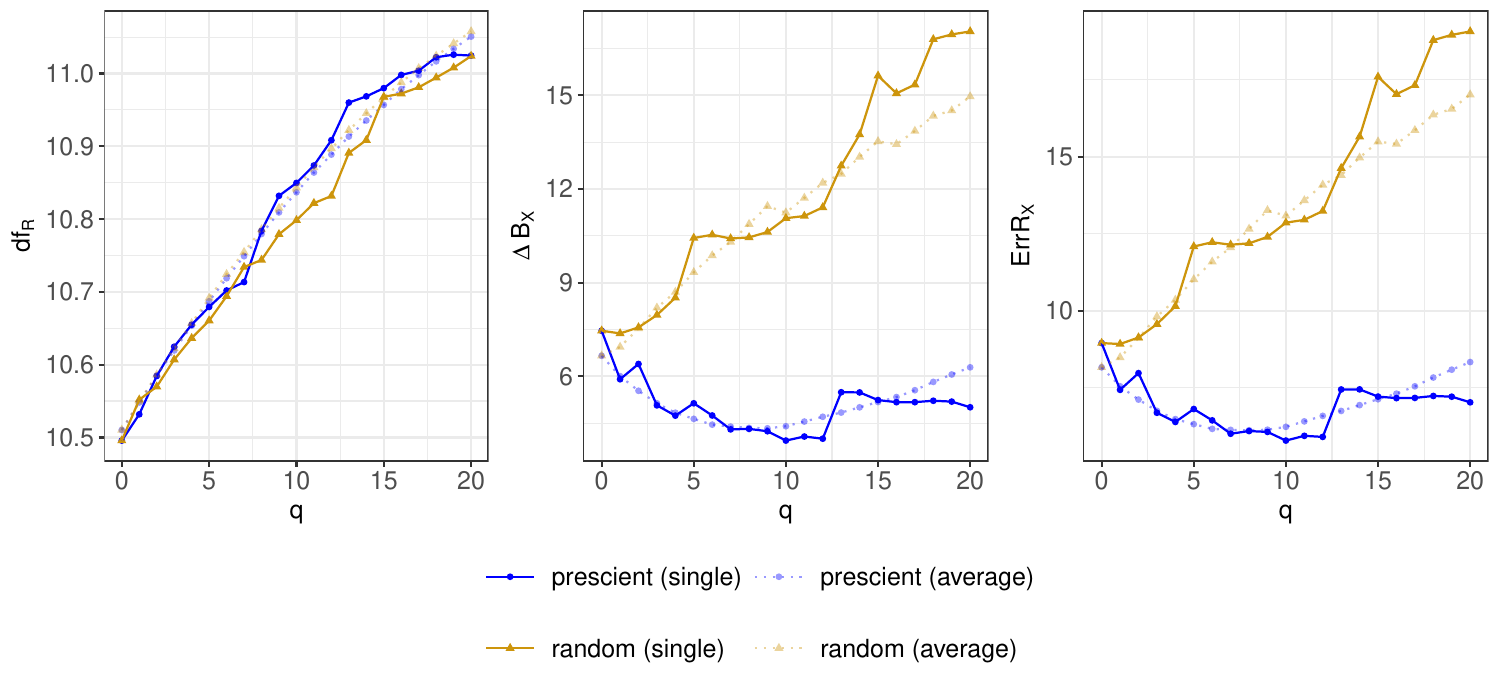}
	\caption{Predictive model degrees of freedom $\dfR$, excess bias $\Delta B_\bX$ and prediction error $\ErrR_\bX$ as a function of the subset size $q$ when the subset expands along a presciently ordered variable sequence and a prespecified randomly ordered one. The solid lines are based on one typical replicate of data whereas the dotted lines are for the average of 500 replicates. }
	\label{fig: gd_df_multiple}
\end{figure}

To examine the effect of subset size $q$, we fix $\theta_j = 1$ for all $j \in \cS$ and increase $q$ by expanding $\cS$ along a presciently ordered variable sequence and a prespecified randomly ordered one for comparison. Figure \ref{fig: gd_df_multiple} shows the results of both a single replicate and the average of 500 replicates. We find that the predictive model degrees of freedom increases as more variables are used in the initial value, and the pattern is consistent across different variable sequences. By contrast, the excess bias and prediction error behave in a completely different manner with the two sequences. When variables are selected presciently in the initial value, the corresponding interpolating models are almost uniformly better than the minimum-norm least squares solution corresponding to $q = 0$, with the optimal choice occurring when the first few most important variables are used. If we select variables at random, however, the resulting models generally get worse as $q$ increases. This once again suggests the influence of variable importance on the performance of the limiting interpolating models through the initial value.

In the following, we give a theoretical explanation for these findings on variable importance. We focus on the excess bias $\Delta B_\bX$ as it is the dominating term in the prediction error for our example. For convenience, we assume $\bx_i \sim \mathcal{N}(\mathbf{0},\bI_p)$ and keep $\theta_j = 1$ for all $j \in \cS$. Let $\mathbf{e}_k \in \R^p$ be the $k$th standard basis vector and $\mathbf{P} = \sum_{k \in \cS} \Vert \bx_{(k)} \Vert^{-2} \mathbf{e}_k \mathbf{e}_k^\T$. Then, we can show that $\bF = \mathbf{P} \bX^\T$
and
\begin{equation} \label{eq: gd_interpolant_excess_bias}
	\Delta B_\bX (\bbeta^{(\infty)}) = \Delta B_\bX (\hat{\bbeta}) - \Vert \bV_2 \bV_2^\T \bbeta \Vert^2 + \Vert \bV_2 \bV_2^\T \mathbf{z} \Vert^2,
\end{equation}
where $\mathbf{z} = \bbeta - \mathbf{P} \bX^\T \bX \bbeta $. For the derivation of \eqref{eq: gd_interpolant_excess_bias}, see Appendix \ref{pf: gd_interpolant_excess_bias}. Note that the third term is the only term that depends on the initial value. Our goal is to make $\Vert \bV_2 \bV_2^\T \mathbf{z} \Vert^2$ as small as possible by choosing $\cS$.

We first consider the case when $q = 1$. Let $\cS = \lbrace j \rbrace$ and define $c_{jk} = \bx_{(j)}^\T \bx_{(k)}/\Vert \bx_{(j)} \Vert^2$ for $k=1,\ldots,p$. When $n$ is large, $c_{jk} \approx 0$ for $j \neq k$ since $\cov(x_{ij}, x_{ik}) = 0$. So we have
$$
	\mathbf{z} = \bbeta - \sum_{k = 1}^p c_{jk} \beta_k \mathbf{e}_j \approx \bbeta - \beta_j \mathbf{e}_j \coloneqq \tilde{\mathbf{z}}.
$$ 
We can look at $\Vert \bV_2 \bV_2^\T \tilde{\mathbf{z}} \Vert^2$ as a proxy for $\Vert \bV_2 \bV_2^\T \mathbf{z} \Vert^2$. Since $\bV_2 \bV_2^\T = \bI_p - \bV_1 \bV_1^\T = \bI_p - \bX^\T (\bX \bX^\T)^{-1} \bX$, and $\bX^\T (\bX \bX^\T)^{-1} \bX$ is the orthogonal projection matrix onto the row space of $\bX$, it follows that
$$
	\Vert \bV_2 \bV_2^\T \tilde{\mathbf{z}} \Vert^2 = \Vert \tilde{\mathbf{z}} - \bX^\T (\bX \bX^\T)^{-1} \bX \tilde{\mathbf{z}} \Vert^2 = \Vert \tilde{\mathbf{z}} \Vert^2 - \Vert \bX^\T (\bX \bX^\T)^{-1} \bX \tilde{\mathbf{z}} \Vert^2.
$$
Also, by the rotational symmetry of the isotropic multivariate normal distribution, we have
$$
	\E(\bX^\T (\bX \bX^\T)^{-1} \bX) = \frac{n}{p} \bI_p.
$$
Therefore,
$$
	\begin{aligned}
		\E(\Vert \bV_2 \bV_2^\T \tilde{\mathbf{z}} \Vert^2) = \left(1 - \frac{n}{p}\right) \Vert \tilde{\mathbf{z}} \Vert^2 = \left(1 - \frac{n}{p}\right) (\Vert \bbeta \Vert^2 - \beta_j^2).
	\end{aligned}
$$
This suggests us to select the most important variable for initialization.

In general, let $\tilde{\bbeta} = \mathbf{P} \bX^\T \bX \bbeta$ so that $\mathbf{z} = \bbeta - \tilde{\bbeta}$. To reduce the excess bias, it suffices to make $\tilde{\bbeta}$ close to $\bbeta$. For simplicity, assume $\Vert \bx_{(j)}\Vert = \sqrt{n}$ for all $j \in \cS$ and denote the corresponding $\tilde{\bbeta}$ by $\tilde{\bbeta}^\ast$ to avoid confusion with the original $\tilde{\bbeta}$. Note that $\tilde{\beta}^\ast_j = (\beta_j + \frac{1}{n}\sum_{k \neq j} \beta_k \bx_{(j)}^\T \bx_{(k)})\mathbf{1}_{\lbrace j \in \cS\rbrace}$. Then we have $\E(\tilde{\beta}^\ast_j) = \beta_j \mathbf{1}_{\lbrace j \in \cS \rbrace}$, $\var(\tilde{\beta}^\ast_j) = \frac{1}{n} (\Vert \bbeta \Vert^2 - \beta_j^2) \mathbf{1}_{\lbrace j \in \cS \rbrace}$ and
\begin{align}
	& b(\tilde{\bbeta}^\ast) \coloneqq \Vert \E(\tilde{\bbeta}^{\ast}) - \bbeta \Vert^2 = \sum_{j = 1}^p (\E(\tilde{\beta}^\ast_j) - \beta_j)^2 = \Vert \bbeta \Vert^2 - \sum_{j \in \cS} \beta_j^2,\\
	& v(\tilde{\bbeta}^\ast) \coloneqq \E \Vert \tilde{\bbeta}^\ast - \E(\tilde{\bbeta}^\ast) \Vert^2 = \sum_{j = 1}^p \var(\tilde{\beta}_j^\ast) = \frac{q}{n} \Vert \bbeta \Vert^2 - \frac{1}{n} \sum_{j \in \cS} \beta_j^2,\\
	& \E\Vert \tilde{\bbeta}^\ast - \bbeta \Vert^2 = b(\tilde{\bbeta}^\ast) + v(\tilde{\bbeta}^\ast) = \frac{n+1}{n} \Vert \bbeta \Vert^2 \left(\frac{n+q}{n+1} - \frac{\sum_{j \in \cS} \beta_j^2}{\Vert \bbeta \Vert^2}\right) \label{eq: exp_norm_beta_betatilde}.
\end{align}
Figure \ref{fig: sq_bias_by_q} illustrates that $\E\Vert \tilde{\bbeta}^\ast - \bbeta \Vert^2$, $\Vert \bbeta-\tilde{\bbeta} \Vert^2$ and $\Vert \bV_2 \bV_2^\T (\bbeta-\tilde{\bbeta}) \Vert^2$ behave similarly along the prescient variable sequence. This justifies our proposal of using $\E\Vert \tilde{\bbeta}^\ast - \bbeta \Vert^2$ as an approximation of $\Vert \bbeta-\tilde{\bbeta} \Vert^2$ to study the behavior of the excess bias. \eqref{eq: exp_norm_beta_betatilde} indicates that, for a fixed $q$, selecting the $q$ most important  variables minimizes the expected distance between $\tilde{\bbeta}^\ast$ and $\bbeta$. As we vary $q$, the size of $\E\Vert \tilde{\bbeta}^\ast - \bbeta \Vert^2$ is governed by the trade-off between $b(\tilde{\bbeta}^\ast)$ and $v(\tilde{\bbeta}^\ast)$. Thus, an ideal set of initial variables should include the most important features while excluding the least important ones. 
\begin{figure}[!t]
	\centering
	\includegraphics[scale = 0.62]{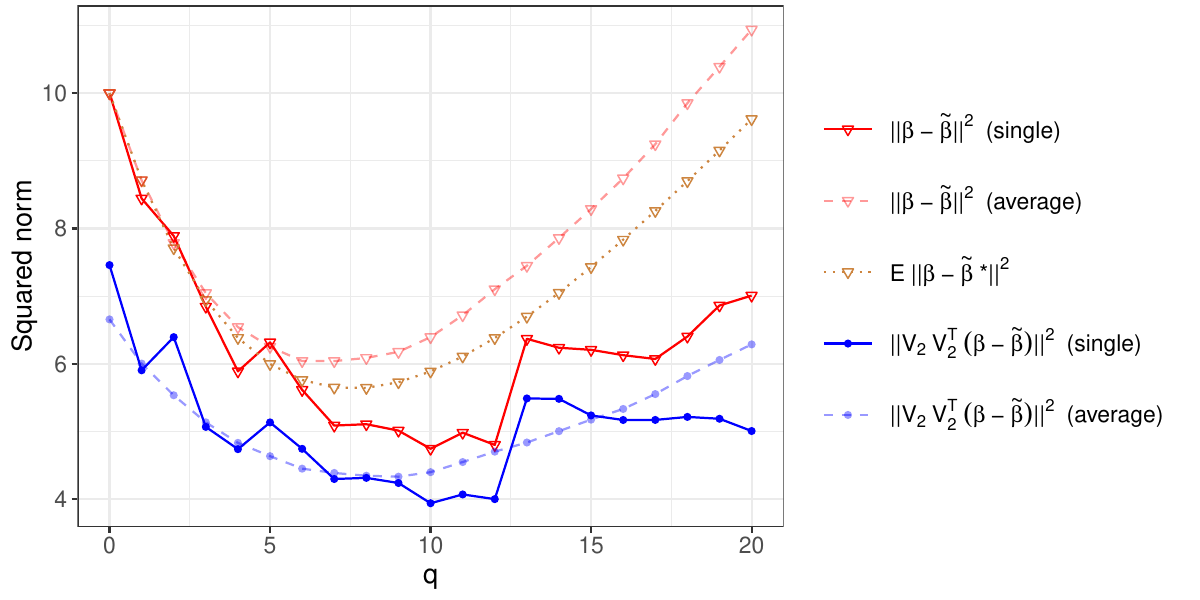}
	\caption{Squared norm of $\bbeta - \tilde{\bbeta}$ and $\bV_2 \bV_2^\T (\bbeta - \tilde{\bbeta})$ as a function of the subset size $q$ in the initial value. The averages are taken over 500 replicates of $\bX$ and the expected value is based on \eqref{eq: exp_norm_beta_betatilde}.}
	\label{fig: sq_bias_by_q}
\end{figure}

\section{Conclusion and Discussion} \label{sec: discussion}
In this work, we have proposed the concept of predictive model degrees of freedom for linear procedures in the standard regression setting. The proposed measure of model complexity targets estimation of out-of-sample prediction error and can differentiate interpolating models. This does not only provide insights into the ``double descent'' phenomenon, but it also allows us to consider a potentially broader choice of models in practice with proper adjustment in the model complexity. Our numerical results on subset regression illustrate benefits of the variable selection criterion based on the predictive model degrees of freedom over other classical methods such as Mallows's $C_p$, $\mathrm{AIC}$, $\mathrm{BIC}$ and cross validation. By accounting for additional uncertainty in out-of-sample prediction, our risk estimator tends to favor more parsimonious models than other classical criteria with reduced variance. The analysis of gradient descent algorithm on least squares problems in the overparameterized regime also reveals some interesting properties of linear interpolating models and sheds light on the effect of initial values on the risk.

There are several extensions worth considering based on the current work. Within the
scope of linear regression procedures that we have focused on, we find it important to develop
efficient methods to estimate the predictive model degrees of freedom using the training
data. Given that heteroskedasticity in data is quite common in practice, it will be also useful to generalize the current framework to incorporate data weights into modeling procedures so as to handle unequal error variances.

Beyond the scope of linear modeling, it will be interesting to extend the framework to a generalized linear model setting with a general loss function. We believe that the squared
covariance adjustment to the classical model degrees of freedom presented in Proposition \ref{prop: dfR_cov_representation} will provide a promising
way to extend the current result to exponential family data and thus deserves further
investigation.

\subsubsection*{Acknowledgments}
We thank Misha Belkin for a lively discussion on the phenomenon of double descent and interpolation in overparametrized regimes, which inspired this work. This research was supported in part by the National Science Foundation Grants DMS-15-13566, DMS-17-12580, DMS-17-21445 and DMS-20-15490.

\bibliographystyle{apalike}
\bibliography{bibliography}

\newpage
\appendix

\section{Proofs}

\subsection{Proof of Propositions \ref{prop: dfR_cov_representation} and \ref{prop: dfR_gdf_representation}}

\textit{Proof.} Since $\hat{\mu}_\ast = \bh_\ast^\T \by$ and $\hat{\mu}_j = \bh_j^\T \by$, we have
$$
	\begin{aligned}
		& \frac{\cov(y_i, \hat{\mu}_\ast \vert \bx_\ast, \bX)}{\sigma_\varepsilon^2} = \frac{\partial \E(\hat{\mu}_\ast\vert \bx_\ast,\bX)}{\partial \mu_i} = h_{\ast,i},\\
		& \frac{\cov(y_i, \hat{\mu}_j \vert \bx_\ast, \bX)}{\sigma_\varepsilon^2} = \frac{\partial \E(\hat{\mu}_j\vert \bX)}{\partial \mu_i} = h_{ji}.
	\end{aligned}
$$
As a result,
$$
	\sum_{i=1}^n \E\left(\frac{\cov^2(y_i, \hat{\mu}_\ast \vert \bx_\ast, \bX)}{(\sigma_\varepsilon^2)^2} \Bigg\vert \bX\right) = \sum_{i=1}^n \E\left(\left(\frac{\partial \E(\hat{\mu}_\ast \vert \bx_\ast, \bX) }{\partial \mu_i}\right)^2 \Bigg\vert \bX\right) = \E(\Vert \bh_\ast \Vert^2 \vert \bX),
$$
and
$$
	\sum_{i=1}^n \sum_{j=1}^n \frac{\cov^2(y_i, \hat{\mu}_j \vert \bX)}{(\sigma_\varepsilon^2)^2} = \sum_{i=1}^n \sum_{j=1}^n \left(\frac{\partial \E(\hat{\mu}_j \vert \bX) }{\partial \mu_i}\right)^2 = \trace(\bH^\T \bH).
$$
The propositions then follow. \hfill\qedsymbol

\subsection{Proof of Lemma \ref{lemma: trace_monotonicity_under}} \label{pf: useful_lemma}

\noindent\textit{Proof.} By the inversion formula for a 2 by 2 partitioned matrix,
$$
	(\tilde{\bX}^\T\tilde{\bX})^{-1} = 
	\begin{pmatrix}
		\mathbf{G} & -g(\bX^\T \bX)^{-1}\bX^\T \mathbf{w}\\
		-g \mathbf{w}^\T \bX(\bX^\T \bX)^{-1} & g
	\end{pmatrix},
$$
where
$$
	\begin{aligned}
		& \mathbf{G} = (\bX^\T \bX)^{-1} + g(\bX^\T \bX)^{-1}\bX^\T \mathbf{w} \mathbf{w}^\T \bX(\bX^\T \bX)^{-1},\\
		& g = \frac{1}{\mathbf{w}^\T (\bI-\bX(\bX^\T \bX)^{-1} \bX^\T) \mathbf{w}}.
	\end{aligned}
$$
$g$ is guaranteed to be positive by the assumption that $\mathrm{rank}(\tilde{\bX})=p+1$. Since $\tilde{\bB}$ is positive semi-definite, there must exist a multivariate normal random vector $\mathbf{v} \in \R^{p+1}$ such that $\var(\mathbf{v}) = \tilde{\bB}$. This implies that $b^2 - \ba^\T \bB^{-1} \ba = \var(v_{p+1}\vert v_1,\ldots,v_p) \geq 0$, with equality if and only if $v_{p+1}$ is a linear combination of $v_1,\ldots,v_p$. Let $\mathbf{u} = \bX\bB^{-1}\ba$, $\mathbf{C} = \bX(\bX^\T \bX)^{-1}\bB(\bX^\T \bX)^{-1}\bX^\T$ and
$$
	\mathbf{K} =
	\begin{pmatrix}
		\bB &  \ba\\
		\ba^\T &  \ba^\T\bB^{-1}\ba 
	\end{pmatrix}.
$$
Then
\begin{equation}\label{eq: pf_trace_inequality}
	\begin{aligned}
		\trace[(\tilde{\bX}^\T\tilde{\bX})^{-1}\tilde{\bB}]
		& = \trace[(\tilde{\bX}^\T\tilde{\bX})^{-1} \mathbf{K}] + g(b^2 - \ba^\T \bB^{-1}\ba)\\
		& \geq \trace[(\tilde{\bX}^\T\tilde{\bX})^{-1} \mathbf{K}]\\
		& = \trace[(\bX^\T \bX)^{-1}\bB] + g \mathbf{w}^\T \bX(\bX^\T \bX)^{-1} \bB (\bX^\T \bX)^{-1}\bX^\T \mathbf{w}\\
		& \qquad\qquad\qquad\quad\;\;\, - 2 g \mathbf{w}^\T \bX(\bX^\T \bX)^{-1} \ba + g \ba^\T\bB^{-1}\ba \\
		& = \trace[(\bX^\T \bX)^{-1}\bB] + g(\mathbf{w}-\mathbf{u})^\T \mathbf{C} (\mathbf{w}-\mathbf{u})\\
		& \geq \trace[(\bX^\T \bX)^{-1}\bB].
	\end{aligned}
\end{equation}
In particular, if $\tilde{\bB}$ is positive definite, $b^2 - \ba^\T \bB \ba >0$, so the strict inequality holds in \eqref{eq: df_ls_inequality}. \hfill\qedsymbol

%

\subsection{Proof of Theorem \ref{thm: dfR_increment}}\label{pf: dfR_increment}
\textit{Proof.} 
Let $\mathbf{u} = \bX_{\cS_1}  \bSigma_{\cS_1}^{-1} \bSigma_{\cS_1, j}$. Following \eqref{eq: pf_trace_inequality} in the proof of Lemma \ref{lemma: trace_monotonicity_under}, we have
$$
	\begin{aligned}
		\trace[(\bX_{\cS_2}^\T \bX_{\cS_2})^{-1} \bSigma_{\cS_2}] & - \trace[(\bX_{\cS_1}^\T
		\bX_{\cS_1})^{-1} \bSigma_{\cS_1}]\\
		& = \frac{(\bx_{(j)} - \mathbf{u})^\T \mathbf{C} (\bx_{(j)} -\mathbf{u}) + \sigma_j^2 - \bSigma_{\cS_1, j}^\T  \bSigma_{\cS_1}^{-1} \bSigma_{\cS_1, j}}{\bx_{(j)}^\T (\bI_n - \bH) \bx_{(j)}} = \frac{\bm{\zeta}^\T \mathbf{C} \bm{\zeta} + 1}{\bm{\zeta}^\T (\bI_n - \bH) \bm{\zeta}}.
	\end{aligned}
$$
The theorem then follows immediately. \hfill\qedsymbol

\subsection{Proof of Theorem \ref{thm: df_ls_monotonicity_general}} \label{pf: df_ls_monotonicity_general}

\textit{Proof.} Since $\mathbf{U}$ has full column rank, we can find $\bV \in \R^{p \times (p-s)}$ such that $\mathbf{Q} = (\mathbf{U}, \bV)\in\R^{p\times p}$ is nonsingular. Define $\tilde{\mathbf{Z}} = \bX \mathbf{Q} = (\mathbf{Z}, \bX \bV)$. Then
$$
	\trace[(\bX^\T \bX)^{-1} \bSigma] = \trace[(\tilde{\mathbf{Z}}^\T \tilde{\mathbf{Z}})^{-1}\mathbf{Q}^\T \bSigma \mathbf{Q}] \geq \trace[(\mathbf{Z}^\T \mathbf{Z})^{-1}\mathbf{U}^\T \bSigma \mathbf{U}],
$$
where we apply Lemma \ref{lemma: trace_monotonicity_under} to the inequality above by noting that $\mathbf{Z}$ is the first $s$ columns of $\tilde{\mathbf{Z}}$ and $\mathbf{U}^\T \tilde{\bSigma}\mathbf{U} = (\mathbf{Q}^\T \bSigma \mathbf{Q})_{1:s,1:s}$. In particular, the equality holds if and only if $s = p$. Also, notice that $\var(\mathbf{z}_i) = \var(\mathbf{U}^\T \bx_i) = \mathbf{U}^\T \bSigma\mathbf{U}$. Thus,
$$
	\dfR(\mathbf{Z}) = \frac{s}{2} + \frac{n}{2}\trace[(\mathbf{Z}^\T \mathbf{Z})^{-1} \mathbf{U}^\T \bSigma \mathbf{U}] \leq \frac{p}{2} + \frac{n}{2}\trace[(\bX^\T \bX)^{-1} \bSigma] = \dfR(\mathbf{X}).
$$
\hfill\qedsymbol

\subsection{Proof of Theorem \ref{thm: df_ls_monotonicity_over}} \label{pf: df_ls_monotonicity_over}
\textit{Proof.} When $\var(\bx_\ast) = \sigma_x^2 \bI_d$, \eqref{eq: dfR_ls} is simplified to
$$
	\dfR = \frac{n}{2} + \frac{n}{2} \sigma_x^2 \trace[(\bX_\cS \bX_\cS^\T)^{-1}], \; \text{for } p \geq n
$$
Similar to the proof of Theorem \ref{thm: df_ls_monotonicity}, it is sufficient to show \eqref{eq: df_ls_monotonicity_over} for $\cS_2 = \cS_1 \cup \left\lbrace j \right\rbrace$, where $j \in \mathcal{D} \backslash \cS_1$. By Sherman-Morrison formula,
$$
	(\bX_{\cS_2} \bX_{\cS_2}^\T)^{-1} = (\bX_{\cS_1} \bX_{\cS_1}^\T)^{-1} - t(\bX_{\cS_1} \bX_{\cS_1}^\T)^{-1}\bx_{(j)} \bx_{(j)}^\T (\bX_{\cS_1} \bX_{\cS_1}^\T)^{-1},
$$
where $t = (1 + \bx_{(j)}^\T (\bX_{\cS_1} \bX_{\cS_1}^\T)^{-1} \bx_{(j)})^{-1}>0$. Taking trace yields
$$
	\trace[(\bX_{\cS_2} \bX_{\cS_2}^\T)^{-1}] = \trace[(\bX_{\cS_1} \bX_{\cS_1}^\T)^{-1}] - t\Vert(\bX_{\cS_1} \bX_{\cS_1}^\T)^{-1} \bx_{(j)} \Vert^2 \leq \trace[(\bX_{\cS_1} \bX_{\cS_1}^\T)^{-1}],
$$
with equality if and only if $\bx_{(j)}$ is in the null space of $(\bX_{\cS_1} \bX_{\cS_1}^\T)^{-1}$. The theorem then follows. \hfill\qedsymbol

\subsection{Proof of Proposition \ref{prop: risk_est_linear_normal}} \label{pf: risk_est_linear_normal}
\textit{Proof.} Let $\bbeta_\cS = (\beta_j)_{j\in \cS}$ and $\bbeta_{\cS^\comp} = (\beta_j)_{j\in \cS^\comp}$. Using the property of the multivariate normal distribution, we have
$$
	\E(\bX_{\cS^\comp}\bbeta_{\cS^\comp}\vert \bX_\cS) = \bX_\cS \bSigma_\cS^{-1} \bSigma_{\cS,\cS^\comp} \bbeta_{\cS^\comp} \quad \text{and} \quad \var(\bX_{\cS^\comp}\bbeta_{\cS^\comp}\vert \bX_\cS) = \sigma_\cS^2 \bI_n.
$$
Note that $\bH = \bX_\cS (\bX_\cS^\T \bX_\cS)^{-1} \bX_\cS^\T$, $\bmu = \bX_\cS \bbeta_\cS + \bX_{\cS^\comp}\bbeta_{\cS^\comp}$ and $\mu_\ast = \bx_{\ast,\cS}^\T\bbeta_\cS + \bx_{\ast,\cS^\comp}^\T \bbeta_{\cS^\comp}$. Then, it is easy to show that
$$
	\begin{aligned}
		\E(\Vert \bmu - \bH\bmu \Vert^2 \vert \bX_\cS) 
		& = \E[\bbeta_{\cS^\comp}^\T \bX_{\cS^\comp}^\T (\bI_n - \bH) \bX_{\cS^\comp}
		\bbeta_{\cS^\comp}\vert \bX_\cS] = \sigma_\cS^2 (n-p),\\
		\E[(\mu_\ast - \bh_\ast^\T \bmu)^2 \vert \bX_\cS] & = \E[(\bx_{\ast,\cS^\comp}^\T \bbeta_{\cS^\comp} - \bx_{\ast,\cS}^\T (\bX_\cS^\T \bX_\cS)^{-1} \bX_\cS^\T \bX_{\cS^\comp} \bbeta_{\cS^\comp})^2 \vert \bX_\cS] \\
		& = \sigma_\cS^2 + \sigma_\cS^2 \trace[(\bX_\cS^\T \bX_\cS)^{-1} \bSigma_\cS].
	\end{aligned}
$$
Therefore,
$$
	\begin{aligned}
		\E(\Delta B_\bX \vert \bX_\cS) & = \E[(\mu_\ast - \bh_\ast^\T \bmu)^2 \vert \bX_\cS] - \frac{1}{n}\E(\Vert \bmu - \bH\bmu \Vert^2 \vert \bX_\cS)\\
		& = \frac{2}{n}\sigma_\cS^2 \left(\frac{p}{2} + \frac{n}{2}\trace[(\bX_\cS^\T \bX_\cS)^{-1} \bSigma_\cS]\right)\\
		& = \frac{2}{n}\sigma_\cS^2 \dfR(\cS),
	\end{aligned}
$$
and
$$
	\begin{aligned}
		\E(\ErrT_\bX \vert \bX_\cS)
		& = \frac{1}{n}\E(\Vert \by - \bH \by \Vert^2 \vert \bX_\cS) \\
		& = \frac{1}{n}\E(\Vert \bmu - \bH\bmu \Vert^2 \vert \bX_\cS) + \frac{1}{n} \E[\mathbf{\varepsilon}^\T (\bI_n -\bH) \mathbf{\varepsilon} \vert \bX_\cS] \\
		& = \frac{n-p}{n}\sigma_\cS^2 + \frac{n-p}{n}\sigma_\varepsilon^2\\
		& = \frac{n-p}{n}\sigma_{\varepsilon, \cS}^2.
	\end{aligned}
$$
\hfill\qedsymbol

\subsection{Proof of Lemma \ref{lemma: exp_inv_leverage}} \label{pf: exp_inv_leverage}
\textit{Proof.} When $\bx_1,\ldots,\bx_n \in \mathbb{R}^p$ ($p < n-2)$ are multivariate normal, \cite{jayakumar2014exact} showed that $h_{11},\ldots, h_{nn}$ are identically distributed with density
$$
	f(h_{ii};n,p) = C \left(h_{ii} - \frac{1}{n}\right)^{\frac{p-1}{2}-1} (1 - h_{ii})^{\frac{n-p}{2}-1}, \;\frac{1}{n} < h_{ii} < 1.
$$
where $C = \mathcal{B}\left(\frac{p-1}{2},\frac{n-p}{2}\right)^{-1}\left(1-\frac{1}{n}\right)^{-\frac{n-3}{2}}$ and $\mathcal{B}(\cdot,\cdot)$ is the beta function. Define
$$
	z_{ii} = \frac{n}{n-1}h_{ii} - \frac{1}{n-1},\quad i=1,\ldots,n.
$$
It is then easy to check that $z_{ii}$ follows a beta distribution with probability density
$$
	f_z(z_{ii}) = \mathcal{B}\left(\frac{p-1}{2},\frac{n-p}{2}\right)^{-1} z_{ii}^{\frac{p-1}{2}-1} (1 - z_{ii})^{\frac{n-p}{2}-1}, \; 0 < z_{ii} < 1.
$$
As a result, we can show that
$$
	\E\left(\frac{1}{1- z_{ii}}\right) = \frac{n-3}{n-p-2} \text{ and }
	\var\left(\frac{1}{1- z_{ii}}\right) = \frac{2(p-1)(n-3)}{(n-p-4)(n-p-2)^2}.
$$
It then follows immediately that
$$
	\begin{aligned}
		\E\left(\frac{1}{1- h_{ii}}\right) & = \frac{n-1}{n} \E\left(\frac{1}{1- z_{ii}}\right) = \frac{n-1}{n} \frac{n-3}{n-p-2},\\
		\var\left(\frac{1}{1- h_{ii}}\right) & = \left(\frac{n-1}{n}\right)^2 \var\left(\frac{1}{1 - z_{ii}}\right) = \left(\frac{n-1}{n}\right)^2 \frac{2(p-1)(n-3)}{(n-p-4)(n-p-2)^2}.
	\end{aligned}
$$
\hfill\qedsymbol

\subsection{Proof of Theorem \ref{thm: expected_adj_over}} \label{pf: expected_adj_over}
\textit{Proof.} Let $\bV = (\bX \bX^\T)^{-1}$. Then $\bV$ follows an inverse-Wishart distribution with scale matrix $\bI_n$ and degrees of freedom $p$, denoted by $\mathcal{W}^{-1}(\bI_n,p)$. Let $v_{ij}$ be the $(i,j)$-th entry of $\bV$. Then we could write $\trace(\bA)$ as
$$
	\trace(\bA) = \sum_{i=1}^n \sum_{j=1}^n \frac{v_{ij}^2}{v_{ii}^2} = n + \sum_{i=1}^n \sum_{j\neq i}\frac{v_{ij}^2}{v_{ii}^2}.
$$
We first show that $\sum_{j\neq i}\frac{v_{ij}^2}{v_{ii}^2}$ are identically distributed for $i=1,\ldots,n$, so that
\begin{equation} \label{eq: expected_trace_A_over}
	\E\left[\trace(\bA)\right] = n + n\E\left(\sum_{j\neq 1} \frac{v_{1j}^2}{v_{11}^2}\right).
\end{equation}
To see this, let $\mathbf{P}_{ik} \in \R^{n\times n}$ be the permutation matrix for switching the $i$th and $k$th rows. Define $\tilde{\bX}=\mathbf{P}_{ik}\bX$ and $\tilde{\bV} = (\tilde{\bX}\tilde{\bX}^\T)^{-1}$. Since the rows of $\bX$ are i.i.d, we have $\tilde{\bX}\stackrel{\rm d}{=}\bX$ and $\tilde{\bV}\stackrel{\rm d}{=}\bV$. Note that $\mathbf{P}_{ik}=\mathbf{P}_{ik}^\T=\mathbf{P}_{ik}^{-1}$. So $\tilde{\bV} = \mathbf{P}_{ik} \bV \mathbf{P}_{ik}$ and
$$
	\sum_{j\neq i}\frac{v_{ij}^2}{v_{ii}^2} = \sum_{j\neq k}\frac{\tilde{v}_{kj}^2}{\tilde{v}_{kk}^2} \stackrel{\rm d}{=} \sum_{j\neq k}\frac{v_{kj}^2}{v_{kk}^2}.
$$
Thus \eqref{eq: expected_trace_A_over} follows. 

Next, we calculate $\E\left(\sum_{j\neq 1} \frac{v_{1j}^2}{v_{11}^2}\right)$. Write
$$
	\bV =
	\begin{pmatrix}
		v_{11} & \mathbf{u}^\T  \\
		\mathbf{u} & \bV_{22} 
	\end{pmatrix}
$$
and define $\bV_{22\cdot 1} = \bV_{22}-\frac{1}{v_{11}}\mathbf{u} \mathbf{u}^\T$. We will use the following results for the inverse-Wishart distribution \citep{bodnar2008properties}:
\begin{enumerate}
	\item[(i)] $\frac{1}{v_{11}}\mathbf{u} \vert \bV_{22\cdot 1} \sim \mathcal{N}(\mathbf{0}, \bV_{22\cdot 1})$;
	\item[(ii)] $\bV_{22\cdot 1} \sim \mathcal{W}^{-1}(\bI_{n-1},p)$.
\end{enumerate}
Then
$$
	\E\left(\sum_{j\neq 1} \frac{v_{1j}^2}{v_{11}^2}\right) = \E\left(\frac{\mathbf{u}^\T \mathbf{u}}{v_{11}^2}\right) = \E\left[\E\left(\frac{\mathbf{u}^\T \mathbf{u}}{v_{11}^2}\Bigg\vert \bV_{22\cdot 1}\right)\right] = \E\left[\trace(\bV_{22\cdot 1})\right] = \frac{n-1}{p-n}.
$$
Using \eqref{eq: expected_trace_A_over}, we get
$$
	\E\left[\trace(\bA)\right] = \frac{n(p-1)}{p-n}.
$$
On the other hand, we have shown in Remark \ref{rmk: df_over_normal} that
$$
	\E(\dfR) = \frac{n}{2}\trace\left[\E\left(\bV^{-1}\right)\right] + \frac{n}{2} = \frac{n(p-1)}{2(p-n-1)}.
$$
Thus, as $n\to\infty$, $p\to\infty$ and $p/n\to\gamma>1$,
$$
	\E(\xi_\bX) = \E\left[2\,\mathrm{df}_{\rm R}(S) - \trace(\bA)\right] = \frac{n(p-1)}{(p-n-1)(p-n)} \to \frac{1}{\gamma - 1}.
$$
\hfill\qedsymbol

\subsection{Proof of Lemma \ref{lemma: gd_convergence}} \label{pf: gd_convergence}
\textit{Proof.} Using the singular value decomposition of $\bX = \mathbf{U} (\bm{\Psi}, \mathbf{O}) (\bV_1,\bV_2)^\T$, we have $\hat{\bbeta} = \bX^\T (\bX \bX^\T)^{-1} \by = \bV_1 \mathbf{\Psi}^{-1} \mathbf{U}^\T \by$ and 
$$
	\mathbf{E}^{k} = (\bI_p - \alpha \bX^\T \bX)^k = (\bV_1,\bV_2)
	\begin{pmatrix}
		(\bI_n -\alpha\bm{\Psi}^2)^k & \mathbf{O}\\
		\mathbf{O} & \bI_{p-n}
	\end{pmatrix}
	\begin{pmatrix}
		\bV_1^\T \\
		\bV_2^\T
	\end{pmatrix}.
$$
Thus, for $\alpha > 0$, \eqref{eq: gradient_descent} converges if and only if 
$$
	-1 < 1-\alpha\psi_j^2 < 1, \text{ for } j = 1,\ldots, n,
$$
which is equivalent to
$$
	0 < \alpha < \frac{2}{\psi_1^2} = \frac{2}{\lambda_{\max}(\bX^\T \bX)}. 
$$
\hfill\qedsymbol

\subsection{Proof of Theorem \ref{thm: gd_df}}\label{pf: gd_df}
\textit{Proof.} By \eqref{eq: gd_interpolant_hat_vec}, we have
$$
	\E(\Vert \bh_\ast \Vert^2 \vert \bX) = \trace[(\bF^\T \bV_2 \bV_2^\T + (\bX \bX^\T)^{-1} \bX)^\T (\bF^\T \bV_2 \bV_2^\T + (\bX \bX^\T)^{-1} \bX) \bSigma].
$$
Since $\bbeta^{(\infty)}$ is an interpolating model, if follows from \eqref{eq: dfR_interpolating_models} that
$$
	\dfR(\bbeta^{(\infty)}) =  \frac{n}{2} + \frac{n}{2}\trace[(\bF^\T \bV_2 \bV_2^\T + (\bX \bX^\T)^{-1} \bX)^\T (\bF^\T \bV_2 \bV_2^\T + (\bX \bX^\T)^{-1} \bX) \bSigma].
$$
Note that $\dfR(\hat{\bbeta}) = \frac{n}{2} + \frac{n}{2} \trace[\bX^\T (\bX \bX^\T)^{-2} \bX \bSigma]$. Then, we have
$$
	\begin{aligned}
		\dfR(\bbeta^{(\infty)}) 
		& = \dfR(\hat{\bbeta}) + \trace(\bV_2 \bV_2^\T \bF \bF^\T \bV_2 \bV_2^\T \bSigma) + 2 \trace\left[\bX^\T (\bX \bX^\T)^{-1} \bF^\T \bV_2 \bV_2^\T \bSigma \right] \\
		& = \dfR(\hat{\bbeta}) + \trace(\bV_2 \bV_2^\T \bF \bF^\T \bV_2 \bV_2^\T \bSigma) + 2 \trace\left[\bV_1 \mathbf{\Psi}^{-1} \mathbf{U}^\T \bF^\T \bV_2 \bV_2^\T \bSigma\right] \\
		& = \dfR(\hat{\bbeta}) + \trace(\bV_2 \bV_2^\T \bF \bF^\T \bV_2 \bV_2^\T \bSigma) + 2 \trace\left[\mathbf{\Psi}^{-1} \mathbf{U}^\T \bF^\T \bV_2 \bV_2^\T \bSigma \bV_1 \right].
	\end{aligned}
$$
The second term above is nonnegative for all $\bSigma$. When $\bSigma = \bI_p$, the third term vanishes due to $\bV_2^\T \bV_1 = \mathbf{O}$. Thus
$$
	\dfR(\bbeta^{(\infty)}) \geq \dfR(\hat{\bbeta}).
$$
The equality holds if and only if $\mathrm{span}(\bF) \subseteq \mathrm{span}(\bV_1)$, i.e., $\bbeta^{(\infty)} = \hat{\bbeta}$. 
\hfill\qedsymbol

\subsection{Derivation of Equation \eqref{eq: gd_interpolant_excess_bias}} \label{pf: gd_interpolant_excess_bias}

Let $\bh_\ast$ and $\bh_\ast^{(\infty)}$ be the hat vectors at $\bx_\ast$ for $\hat{\bbeta}$ and $\bbeta^{(\infty)}$ respectively. Note that both $\hat{\bbeta}$ and $\bbeta^{(\infty)}$ define interpolating models. Hence,
$$
	\Delta B_\bX (\hat{\bbeta}) = \E[(\mu_\ast - \bh_\ast^\T \bmu)^2 \vert \bX] \, \text{ and } \, \Delta B_\bX (\bbeta^{(\infty)}) = \E[(\mu_\ast - (\bh_\ast^{(\infty)})^\T \bmu)^2 \vert \bX].
$$
Since we assume $\bmu(\bx; \bbeta) = \bx^\T \bbeta$, we have
$$
	\mu_\ast - \bh_\ast^\T \bmu = \bx_\ast^\T (\bI_p - \bX^\T (\bX \bX^\T)^{-1} \bX) \bbeta = \bx_\ast^\T \bV_2 \bV_2^\T \bbeta.
$$
Then, using \eqref{eq: gd_interpolant_hat_vec} with $\bF = \mathbf{P} \bX^\T$, we have
$$
	\begin{aligned}
		(\mu_\ast - (\bh_\ast^{(\infty)})^\T \bmu)^2
		= (\mu_\ast - \bh_\ast^\T \bmu)^2 
		& - 2 (\bx_\ast^\T \bV_2\bV_2^\T \bbeta)(\bx_\ast^\T  \bV_2\bV_2^\T \mathbf{P} \bX^\T \bX \bbeta)\\
		& + (\bx_\ast^\T  \bV_2\bV_2^\T \mathbf{P} \bX^\T \bX \bbeta)^2
	\end{aligned}
$$
Under the assumption that $\E(\bx_\ast) = 0$ and $\var(\bx_\ast) = \bI_p$,
$$
	\begin{aligned}
		\Delta B_\bX (\bbeta^{(\infty)}) 
		& = \Delta B_\bX (\hat{\bbeta}) - 2 (\bV_2\bV_2^\T \bbeta)^\T (\bV_2\bV_2^\T \mathbf{P} \bX^\T \bX \bbeta) + \Vert \bV_2\bV_2^\T \mathbf{P} \bX^\T \bX \bbeta \Vert^2\\
		& = \Delta B_\bX (\hat{\bbeta}) - \Vert \bV_2\bV_2^\T \bbeta \Vert^2 + \Vert \bV_2\bV_2^\T \bbeta - \bV_2\bV_2^\T \mathbf{P} \bX^\T \bX \bbeta \Vert^2\\
		& = \Delta B_\bX (\hat{\bbeta}) - \Vert \bV_2\bV_2^\T \bbeta \Vert^2 + \Vert \bV_2\bV_2^\T \mathbf{z} \Vert^2.
	\end{aligned}
$$

\section{Examples}
\subsection{Example \ref{ex: df_wgt_funcs}} \label{det: df_wgt_funcs}
Using the expression for $\bh_\ast$ in the example, we have
$$
	\Vert \bh_\ast \Vert^2 = 
	\begin{cases}
		1, & a \leq x_\ast < x_1 \text{ or } x_n \leq x_\ast \leq b,\\
		K^2(z_{\ast,i}) + (1-K(z_{\ast,i}))^2, & x_i \leq x_\ast < x_{i+1}.
	\end{cases}
$$
Define $C_K=\int_{0}^1 \left[K^2(z) + (1-K(z))^2\right] d z$. For any distribution on $[a,b]$ with continuous and positive density $f$, we have
\begin{equation} \label{eq: ex1_expectation_decomp}
	\E(\Vert \bh_\ast\Vert^2\Vert \bX) = \int_{a}^{x_1}f(x_\ast) d x_\ast + \sum_{i=1}^{n-1}\int_{x_i}^{x_{i+1}}\Vert \bh_\ast\Vert^2 f(x_\ast) d x_\ast + \int_{x_n}^{b}f(x_\ast) d x_\ast.
\end{equation}
By the mean value theorem for definite integrals, there must exist some $\tilde{x}_i \in (x_i,x_{i+1})$ such that
$$
	\begin{aligned}
		\int_{x_i}^{x_{i+1}}\Vert \bh_\ast\Vert^2 f(x_\ast) d x_\ast 
		& = (x_{i+1}-x_i)f(\tilde{x}_i)\int_{x_i}^{x_{i+1}}\Vert \bh_\ast\Vert^2 \frac{1}{x_{i+1}-x_i} d x_\ast \\
		& = C_K (x_{i+1}-x_i)f(\tilde{x}_i).
	\end{aligned}
$$
Note that $x_1\xrightarrow{\rm P} a$, $x_n\xrightarrow{\rm P} b$ as $n \to \infty$. By applying the above result to \eqref{eq: ex1_expectation_decomp} and using the definition of definite integrals as the limit of a Riemann sum, we have
$$
	\begin{aligned}
		\E(\Vert \bh_\ast\Vert^2\Vert \bX)
		& = \int_{a}^{x_1}f(x_\ast) d x_\ast + C_K\sum_{i=1}^{n-1}(x_{i+1}-x_i)f(\tilde{x}_i) + \int_{x_n}^{b}f(x_\ast) d x_\ast\\
		& \xrightarrow{\rm P} C_K \int_{a}^b f(x_\ast) d x_\ast = C_K,\quad \text{as $n\to\infty$}.
	\end{aligned}
$$
Thus,
$$
	\frac{1}{n}\dfR = \frac{1}{2} + \frac{1}{2}\E(\Vert \bh_\ast\Vert^2\Vert \bX) \xrightarrow{\rm P} \frac{C_K + 1}{2}.
$$

\subsection{Example \ref{ex: spline_df}} \label{det: spline_df}

Let $\phi_t(x) = x^t /t!$ for $t=0,1,2,\ldots$ and
$$
	R(u,v) = \int_{0}^{1} \frac{(u-z)^{s-1}_+}{(s-1)!} \frac{(v-z)^{s-1}_+}{(s-1)!} dz, \; \text{for } u,v \in [0,1],
$$
where $(\cdot)_+ = \max(0,\cdot)$. Then, the interpolating polynomial spline of degree $2s-1$ is of the form
\begin{equation}\label{eq: spline_expr}
	\hat{\mu}(x_\ast) = \sum_{t=0}^{s-1} d_t \phi_t(x_\ast) + \sum_{i=1}^n c_i R(x_\ast,x_i).
\end{equation}
To write out $c_i$ and $d_t$, let $\mathbf{c} = (c_1,\ldots,c_n)^\T$ and $\mathbf{d} = (d_0,\ldots,d_{s-1})^\T$. Let $\mathbf{S} = (\phi_t(x_i)) \in \R^{n \times s}$ and $\mathbf{T} = (R(x_i,x_j)) \in \R^{n \times n}$. Assume $\mathbf{S}$ has Q-R decomposition
$$
	\mathbf{S} = (\mathbf{Q}_1, \mathbf{Q}_2)
	\begin{pmatrix}
		\mathbf{R}  \\
		\mathbf{O} 
	\end{pmatrix},
$$
where $\mathbf{Q}_1 \in \R^{n \times s}$, $\mathbf{Q}_2 \in \R^{n \times (n-s)}$, and $\mathbf{R} \in \R^{s \times s}$ is an upper triangular matrix. Define $\mathbf{U} = \mathbf{Q}_2 (\mathbf{Q}_2 ^\T \mathbf{T} \mathbf{Q}_2)^{-1} \mathbf{Q}_2^\T$ and $\bV = \mathbf{R}^{-1} \mathbf{Q}_1^\T (\bI - \mathbf{T}\mathbf{U})$. Then, we have
\begin{equation}\label{eq: spline_coef}
	\mathbf{c} = \mathbf{U} \by \; \text{ and }\; \mathbf{d} = \bV \by.
\end{equation}
For the derivation of \eqref{eq: spline_expr} and \eqref{eq: spline_coef}, see \cite{gu2013smoothing}. For any $x_\ast \in [0,1]$, let $\bm{\phi}_\ast = (\phi_0(x_\ast),\ldots,\phi_{s-1}(x_\ast))^\T$ and $\bm{\rho}_\ast = (R(x_\ast,x_1),\ldots,R(x_\ast,x_n))^\T$. Then, we can rewrite \eqref{eq: spline_expr} as
$$
	\hat{\mu}(x_\ast) = \bm{\phi}_\ast^\T \mathbf{d}  + \bm{\rho}_\ast^\T \mathbf{c} = (\bV^\T \bm{\phi}_\ast + \mathbf{U}^\T \bm{\rho}_\ast)^\T \by.
$$
Since $\mathbf{U}, \bV$, $\bm{\phi}_\ast$ and $\bm{\rho}_\ast$ don't depend on $\by$, we have
$$
	\bh_\ast = \bV^\T \bm{\phi}_\ast + \mathbf{U}^\T \bm{\rho}_\ast.
$$

\subsection{Example \ref{ex: df_vs_bw}} \label{det: df_vs_bw}
Assume $\frac{1}{2} \bar{L} \leq \omega \leq \ubar{L}$. Define $x_0 = a - \omega$ and $x_{n+1} = b + \omega$. By construction of the smoother, we have
$$
	\bh_\ast =
	\begin{cases}
		\mathbf{e}_i, & x_{i-1} + \omega < x_\ast < x_{i+1} - \omega, \;i = 1,\ldots,n\\
		\frac{1}{2}\mathbf{e}_i + \frac{1}{2}\mathbf{e}_{i+1}, & x_{i+1} - \omega \leq x_\ast \leq x_{i} + \omega, \;i = 1,\ldots,n-1.
	\end{cases}
$$
Since $\mathbf{e}_i \in \R^n$ is the $i$th standard basis vector,
$$
	\Vert \bh_\ast \Vert^2 =
	\begin{cases}
		1, & x_{i-1} + \omega < x_\ast < x_{i+1} - \omega, \;i = 1,\ldots,n\\
		\frac{1}{2}, & x_{i+1} - \omega \leq x_\ast \leq x_{i} + \omega, \;i = 1,\ldots,n-1.
	\end{cases}
$$
When $x_\ast \sim \mathrm{Uniform}(a,b)$,
$$
	\begin{aligned}
		\E(\Vert \bh_\ast \Vert^2 \vert \bX) 
		& = \frac{1}{b-a}\sum_{i=1}^n (x_{i+1} - x_{i-1} - 2\omega) + \frac{1}{2(b-a)}\sum_{i=1}^{n-1} (x_i - x_{i+1} + 2\omega)\\
		& = 1 + \frac{x_n - x_1}{2(b-a)} - \frac{n-1}{b-a}\omega.
	\end{aligned}
$$
Thus, by \eqref{eq: dfR_interpolating_models}, we have
$$
	\dfR(\omega) =  n + \frac{n(x_n - x_1)}{4(b-a)} - \frac{n(n-1)}{2(b-a)}\omega.
$$

\subsection{Optimal Model Size in Figure \ref{fig: mod_select_compr}} \label{ex: mod_select_compr}
To study the difference of $C_p$ and $\widetilde{\ErrR}$ in model selection, it is more convenient to look at their expected versions $\E_\bX(C_p) = \ErrF$ and $\E_\bX(\widetilde{\ErrR}) = \ErrR$. Assume $\sigma_\cS^2(p) = \alpha \left(1-\frac{p}{d}\right)^\eta$, where $\alpha > 0$, $\eta \geq 1$ and $d\leq n$. Here, $\eta$ is a parameter that controls the sparsity of the true model. Implications of this assumption include
\begin{enumerate}
	\item[i.] The signal-to-noise ratio is $\frac{\alpha}{\sigma_\varepsilon^2}$.
	\item[ii.] When $\eta = 1$, $\sigma_\cS^2$ decreases linearly in $p$, which mimics the random selection procedure or the situation where all variables have the same degree of importance.
	\item[iii.] When $\eta > 1$, $\sigma_\cS^2$ decays faster as $p$ increases. This portrays the situation where variables are included ``presciently'' from the most important to the least. 
\end{enumerate}

For simplicity, consider the case when $d=n$ and $\sigma_\varepsilon^2=1$. Let $\gamma=\frac{p}{n}$ and $c = \alpha(\eta + 1)$. Here, we allow $\gamma$ to take any real value between 0 and 1. Then, by Proposition \ref{prop: risk_est_linear_normal} and \eqref{eq: dfR_normal_expectation}, we have
$$
	\ErrT = 1-\gamma + \alpha(1-\gamma)^{\eta + 1} \text{ and } \E(\Delta B_\bX) = \alpha \gamma (1-\gamma)^\eta + \alpha \gamma (1-\gamma)^\eta \frac{1}{1-\gamma - \frac{1}{n}}.
$$
Thus,
$$
	\begin{aligned}
		\ErrF &= \ErrT + \frac{2}{n}\dfF = 1 + \gamma + \alpha (1-\gamma)^{\eta+1},\\
		\ErrR &= \ErrT + \E(\Delta B_\bX) + \frac{2}{n}\E(\dfR) \\
		& = 1 + \alpha(1-\gamma)^{\eta+1} + \alpha\gamma(1-\gamma)^{\eta} + \alpha \gamma (1-\gamma)^\eta \frac{1}{1-\gamma-\frac{1}{n}} + \frac{\gamma}{1-\gamma-\frac{1}{n}}.
	\end{aligned}
$$
Assume $0 \leq \gamma < 1- \frac{1}{n}$. Define $\gamma_{\rm F}^\ast = \argmin_\gamma \ErrF = \max(0, 1-c^{-1/\eta})$ and $\gamma_{\rm R}^\ast = \argmin_\gamma \ErrR$. Then,
$$
	\begin{aligned}
		\frac{d\ErrR}{d\gamma} 
		& = -\alpha \eta (1-\gamma)^{\eta - 1}\frac{1-\frac{1}{n}}{1-\gamma-\frac{1}{n}} + \alpha (1-\gamma)^\eta \frac{1-\frac{1}{n}}{\left(1-\gamma-\frac{1}{n}\right)^2} + \frac{1-\frac{1}{n}}{\left(1-\gamma-\frac{1}{n}\right)^2}\\
		& > -\alpha \eta (1-\gamma)^{\eta - 1}\frac{1-\frac{1}{n}}{1-\gamma-\frac{1}{n}} + \alpha (1-\gamma)^\eta \frac{1-\frac{1}{n}}{(1-\gamma)\left(1-\gamma-\frac{1}{n}\right)} + \frac{1-\frac{1}{n}}{(1-\gamma) \left(1-\gamma-\frac{1}{n}\right)}\\
		& = \frac{1-\frac{1}{n}}{(1-\gamma) \left(1-\gamma-\frac{1}{n}\right)} \left[1 - \alpha (\eta - 1) (1 - \gamma)^\eta \right] \\
		& > \frac{1-\frac{1}{n}}{(1-\gamma) \left(1-\gamma-\frac{1}{n}\right)} \left[1 - c (1 - \gamma)^\eta \right].
	\end{aligned}
$$
When $\gamma = \gamma_{\rm F}^\ast$,
$$
	\begin{aligned}
		\frac{d\ErrR}{d\gamma}\Big\vert_{\gamma=\gamma_{\rm F}^\ast} 
		& > \frac{1-\frac{1}{n}}{(1-\gamma_{\rm F}^\ast) \left(1-\gamma_{\rm F}^\ast-\frac{1}{n}\right)} [1 - c (\min(1, c^{-1/\eta}))^\eta] \\
		& = \frac{1-\frac{1}{n}}{(1-\gamma_{\rm F}^\ast) \left(1-\gamma_{\rm F}^\ast-\frac{1}{n}\right)} [1 - \min(c, c^{-1})] \\
		& \geq 0.
	\end{aligned}
$$
Since $\frac{1-\frac{1}{n}}{(1-\gamma) \left(1-\gamma-\frac{1}{n}\right)} > 0$ for $\gamma < 1- \frac{1}{n}$, and $1 - c (1 - \gamma)^\eta$ is an increasing function of $\gamma$, we also have
$$
	\frac{d\ErrR}{d\gamma} > \frac{1-\frac{1}{n}}{(1-\gamma) \left(1-\gamma-\frac{1}{n}\right)} \left[1 - c (1 - \gamma)^\eta \right] > 0, \; \text{for } \gamma \in \left(\gamma_{\rm F}^\ast,1 - \frac{1}{n}\right).
$$
This implies that $\gamma_{\rm R}^\ast \leq \gamma_{\rm F}^\ast$ for all $\alpha > 0$ and $\eta \geq 1$. In particular, when $\frac{1}{\eta+1} \leq \alpha < \frac{1}{\eta-1}$, 
$$
	\gamma_{\rm R}^\ast = 0 \; \text{and}\; \gamma_{\rm F}^\ast = 1-c^{-\frac{1}{\eta}} > 0.
$$
This means that $\ErrR$ favors a null model in this setting while $\ErrF$ doesn't. Such a difference could be large in terms of the number of selected variables when $n$ is large.
On the other hand, for any fixed $\alpha>0$,
$$
	\lim_{\eta\to\infty}\frac{d\ErrR}{d\gamma}\Big\vert_{\gamma=\gamma_{\rm F}^\ast} = 0,
$$
which suggests that $\gamma_{\rm R}^\ast$ approaches $\gamma_{\rm F}^\ast$ from the left as the level of sparsity increases.

\subsection{Cancer Mortality Data Analysis} \label{det: rda}

Based on the data partition of Figure \ref{fig: rda_subset_reg_single}, the first 6 and 7 variables were respectively selected by $\widehat{\ErrR}_+$ and $\ErrR_{\rm te}$. Table \ref{tab: rda_coef} lists the corresponding variables with their estimated coefficients. In Figure \ref{fig: rda_diagnostics}, we illustrate the prediction results of the model identified by $\widehat{\ErrR}_+$ on all 1,148 counties along with the population map. We observe that large prediction errors are more likely to occur on less populated counties. We then check the mean and standard deviation of the prediction errors by quantiles of the county-level population in Table \ref{tab: pred_err_by_pop} and find that 20\% of counties with smallest population tend to be overestimated and have much more variable prediction errors. This implies that the assumption of constant variance in our regression model does not hold exactly. Since death due to cancer is a relatively less likely event, we should expect generally larger variance for the death rate of those less populous counties.
\begin{table}[t]
	\centering
	\caption{Estimated coefficients in the optimal models identified by $\widehat{\ErrR}_+$ and $\ErrR_{\rm te}$ respectively.}
	\label{tab: rda_coef}
	\begin{threeparttable}
		\fontsize{7}{8.4}\selectfont
		\begin{tabular}{p{0.16\textwidth}p{0.43\textwidth}>{\centering}p{0.12\textwidth}>{\centering}p{0.06\textwidth}>{\centering\arraybackslash}p{0.06\textwidth}}
			\hline
			\multirow{2}{*}{Variable} & \multirow{2}{*}{Description} & \multirow{2}{*}{Transformation} & \multicolumn{2}{c}{\addstackgap[2pt]{Coefficient}} \\ 
			\cline{4-5}
			& & & \addstackgap[2pt]{$\widehat{\ErrR}_+$} & $\ErrR_{\rm te}$ \\
			\hline
			\texttt{PctPrivateCoverage} & Percent of county residents with private health coverage  & logit &  $-6.613$ & $-3.338$ \\ 
			
			\texttt{incidenceRate} & Mean \textit{per capita} (100,000) cancer diagnoses & - & $0.118$ \quad* & $0.121$ \quad* \\ 
			
			\texttt{avgDeathRateEst2015} & Estimated death rate based on 2015 population estimates and average number of reported cancer mortalities from 2010 to 2016 & - & $0.381$ *** & $0.360$ *** \\
			
			\texttt{PctPublicCoverage} & Percent of county residents with government provided health coverage & logit & $-45.562$ \quad** & $-42.542$ \quad* \\
			
			\texttt{povertyPercent} & Percent of population in poverty & - & $1.513$ \quad$\cdot$ & $1.427$ \quad$\cdot$ \\
			
			\texttt{PctUnemployed16\_Over} & Percent of county residents aged 16 and over and unemployed & logit & $23.788$ *** & $23.222$ *** \\
			
			\texttt{PctBachDeg25\_Over} & Percent of county residents aged 25 and over with bachelor's degree as the highest education attained & logit & - & $-4.110$ \\
			\hline
			& & \multicolumn{1}{r}{$\hat{\sigma}$} & $12.71$ & $12.83$ \\
			& & \multicolumn{1}{r}{adj.$R^2$} & $0.715$ & $0.709$ \\
			\cline{3-5}
		\end{tabular}
		
		\begin{tablenotes}\footnotesize
			\item \hfill Significance codes: `***' $ < 0.001$, `**' $0.001 \sim 0.01$, `*' $0.01 \sim 0.05$, `$\cdot$' $0.05 \sim 0.1$, ` ' $>0.1$.
		\end{tablenotes}
	\end{threeparttable}
\end{table}

\begin{figure}[b]
	\centering
	\includegraphics[scale = 0.54]{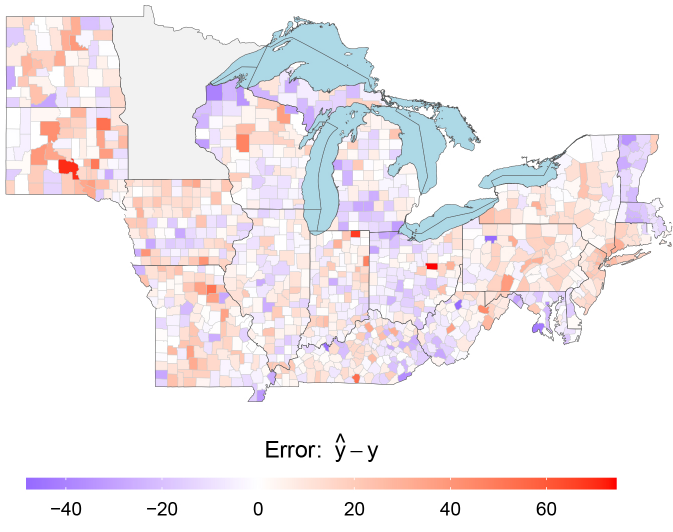}
	\includegraphics[scale = 0.54]{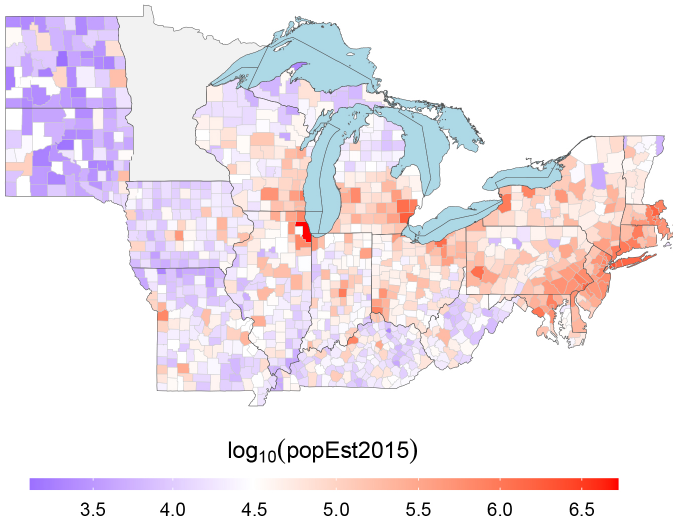}
	\caption{Prediction error (left) and population (right) maps for counties in the focused states.}
	\label{fig: rda_diagnostics}
\end{figure}

We also find a few overestimated counties in central New York, central Pennsylvania and northeast West Virginia. Geographical factors can be one possible explanation, as these counties are mostly situated in the Appalachian Mountains region, which may have different natural and socioeconomic conditions from the majority of the counties in our focused area.
\begin{table}[t]
	\centering
	\caption{Mean and standard deviation of the prediction errors ($\hat{y}-y$) by population quantiles.}\label{tab: pred_err_by_pop}
	\begin{tabular}{c|ccccc}
		\hline
		Population quantile & $[0, 0.2]$ & $(0.2, 0.4]$ & $(0.4, 0.6]$ & $(0.6, 0.8]$ & $(0.8, 1]$ \\
		\hline
		Mean& $3.537$ & $-0.511$ & $-2.365$ & $-2.123$ & $2.229$ \\
		SD &  $19.768$ & $14.779$ & $13.343$ & $12.827$ & $12.191$ \\
		\hline
	\end{tabular}
\end{table}

\end{document}